\documentclass{article}
\usepackage{amssymb}
\usepackage{tikz}
\usepackage{xr}
\externaldocument{BayesianRDPG_supp}
\usetikzlibrary{arrows, calc, decorations.pathmorphing, backgrounds, positioning, fit, petri, automata}
\definecolor{grey1}{rgb}{0.5,0.5,0.5}
\usepackage[top=2.54cm, bottom=2.54cm, left=2.2cm, right=2.2cm]{geometry}
\usepackage{caption}
\usepackage{subcaption}
\usepackage{booktabs}

\usepackage{algorithmicx}
\usepackage{algcompatible}
\usepackage{algorithm}

\usepackage{hyperref}
\hypersetup{
  colorlinks,
  citecolor=blue,
  linkcolor=blue,
  urlcolor=magenta}

\usepackage{mathtools}
\usepackage{graphicx}
\usepackage{amsmath,amsthm,bm,bbm}
\usepackage{rotating}
\usepackage{mathrsfs}
\usepackage{chngcntr}
\usepackage{array}
\usepackage{apptools}
\usepackage[titletoc,title]{appendix}
\usepackage{comment}

\usepackage{xcolor}         
\definecolor{grau}{rgb}{0.8,0.8,0.8}

\newcommand{\chen}[1]{\color{red}}
\usepackage{setspace,lscape,longtable}
\usepackage{amsmath,amsthm,bm}
\usepackage{fullpage}  

\usepackage{titlesec}
\usepackage[english]{babel}
\setcounter{secnumdepth}{4}

\usepackage{xcolor}         
\definecolor{grau}{rgb}{0.8,0.8,0.8}



\newtheoremstyle{mystyle}
{1ex} 
{1ex} 
{\itshape} 
{} 
{\bfseries} 
{} 
{1ex} 
{} 
\theoremstyle{plain}
\newtheorem{proposition}{Proposition}
\newtheorem{theorem}{Theorem}
\newtheorem{lemma}{Lemma}
\newtheorem{corollary}{Corollary}

\theoremstyle{definition}

\newtheorem*{example}{Example}

\newtheorem{remark}{Remark}

\usepackage[comma,authoryear]{natbib}

\bibpunct{(}{)}{;}{a}{,}{,}

\DeclareMathOperator*{\argmin}{arg\,min}
\DeclareMathOperator*{\argmax}{arg\,max}
\DeclareMathOperator*{\arginf}{arg\,inf}

\newcommand{\prob}{{\mathbb{P}}}

\newcommand{\expect}{\mathbb{E}}

\newcommand{\transpose}{^{\mathrm{T}}}


\newcommand{\calA}{{\mathcal{A}}}

\newcommand{\calC}{{\mathcal{C}}}
\newcommand{\calD}{{\mathcal{D}}}
\newcommand{\calE}{{\mathcal{E}}}
\newcommand{\calF}{{\mathcal{F}}}

\newcommand{\calN}{{\mathcal{N}}}
\newcommand{\calO}{{\mathcal{O}}}

\newcommand{\calS}{{\mathcal{S}}}

\newcommand{\calU}{{\mathcal{U}}}
\newcommand{\calV}{{\mathcal{V}}}

\newcommand{\calX}{{\mathcal{X}}}



\newcommand{\bU}{{\mathbf{U}}}
\newcommand{\bP}{{\mathbf{P}}}

\newcommand{\bY}{{\mathbf{Y}}}

\newcommand{\bx}{{\mathbf{x}}}
\newcommand{\bX}{{\mathbf{X}}}
\newcommand{\bA}{{\mathbf{A}}}
\newcommand{\bB}{{\mathbf{B}}}
\newcommand{\bC}{{\mathbf{C}}}

\newcommand{\bE}{{\mathbf{E}}}

\newcommand{\bL}{{\mathbf{L}}}

\newcommand{\bR}{{\mathbf{R}}}
\newcommand{\bS}{{\mathbf{S}}}
\newcommand{\bW}{{\mathbf{W}}}
\newcommand{\bZ}{{\mathbf{Z}}}

\newcommand{\bu}{{\mathbf{u}}}

\newcommand{\be}{{\mathbf{e}}}

\newcommand{\bV}{{\mathbf{V}}}

\newcommand{\bDelta}{{\bm{\Delta}}}

\newcommand{\bSigma}{{\bm{\Sigma}}}

\newcommand{\eye}{{\mathbf{I}}}

\newcommand{\zero}{{\bm{0}}}
\newcommand{\eps}{\epsilon}

\usepackage{lipsum}
\usepackage{enumitem}
\setlist[enumerate]{itemsep=0mm}
\setlist[itemize]{itemsep=0mm}

\author{Fangzheng Xie\thanks{Department of Applied Mathematics and Statistics, Johns Hopkins University} \and Yanxun Xu\footnotemark[1] \thanks{Correspondence should be addressed to Yanxun Xu (yanxun.xu@jhu.edu)}
} 
\title{\bf Optimal Bayesian Estimation for Random Dot Product Graphs}
 \date{}
\begin{document}
\allowdisplaybreaks

\maketitle

\begin{abstract}
We propose a Bayesian approach, called the posterior spectral embedding, for estimating the latent positions in random dot product graphs, and prove its optimality. Unlike the classical spectral-based adjacency/Laplacian spectral embedding, the posterior spectral embedding is a fully-likelihood based graph estimation method taking advantage of the Bernoulli likelihood information of the observed adjacency matrix. We develop a minimax-lower bound for estimating the latent positions, and show that the posterior spectral embedding achieves this lower bound since it both results in a minimax-optimal posterior contraction rate, and yields a point estimator achieving the minimax risk asymptotically. 
The convergence results are subsequently applied to clustering in stochastic block models,  the result of which strengthens an existing result concerning the number of mis-clustered vertices. 
We also study a spectral-based Gaussian spectral embedding as a natural Bayesian analogy of the adjacency spectral embedding, but the resulting posterior contraction rate is sub-optimal with an extra logarithmic factor. The practical performance of the proposed methodology is illustrated through extensive synthetic examples and the analysis of a Wikipedia graph data.
\end{abstract}

\noindent%
{\it Keywords:}
Likelihood-based graph estimation; Minimax-optimality; Posterior spectral embedding; Stochastic block models.
\section{Introduction} 
\label{sec:introduction}
Using graphs as a  data structure to represent network data with the vertices denoting entities and the edges encoding relationships between vertices, has become increasingly important in a broad range of applications, including social networks \citep{young2007random}, brain imaging \citep{priebe2017semiparametric}, and neuroscience \citep{7769223,tang2016connectome}. 
For example, in a facebook network, vertices represent users, and the occurrence of an edge linking any two users indicates that they are friends on  facebook. 
When one collects  random graph data, it may be costly or even infeasible to collect individual-specific attributes that are heterogeneous across  individuals, while only the adjacency matrix of the graph is accessible. 
For example, in studying the structure of a wikipedia page network, collecting the hyperlinks among articles is much more feasible than collecting the attributes associated with each individual article. To address such a challenge arising in real-world network data, the authors of \cite{doi:10.1198/016214502388618906} proposed \emph{latent positions graphs}, in which each vertex is associated with an unobserved Euclidean vector called the \emph{latent position}, and the edge probability between any two vertices only depends on their latent positions. Formally, each vertex $i$ is associated with a vector $\bx_i$ in some latent space $\calX$, and there exists a symmetric function $\kappa:\calX\times\calX\to [0, 1]$ called \emph{graphon} \citep{lovasz2012large}, such that an edge between vertices $i$ and $j$ occurs with probability $\kappa(\bx_i,\bx_j)$, and the occurrences of these edges are independent given the latent positions. There is vast literature addressing statistical inference on latent positions graphs. For an incomplete list of references, see \cite{Bickel21068,MAL-005,FORTUNATO201075,bickel2011,10.1093/biomet/asr053}, among others.

In this paper we focus on a specific example of latent positions graphs: the \emph{random dot product graph}  model \citep{young2007random}, in which the graphon function $\kappa$ is simply the dot product of two latent positions: $\kappa(\bx_i,\bx_j) = \bx_i\transpose\bx_j$. 
The random dot product graph model enjoys several nice properties. 
Firstly, the well-known stochastic block model, in which the vertices are grouped into several blocks, is a special case of the random dot product graph model and can be represented with the latent positions of vertices in the same block being identical. Secondly, the architecture of the random dot product graph is simple, as the expected value of the adjacency matrix is a symmetric low-rank matrix, motivating the use of a wide range of tractable spectral-based techniques for statistical analysis. Furthermore, the random dot product graph can provide accurate approximation to more general latent positions graphs when the dimension of the latent positions  grows with the number of vertices at a certain rate \citep{tang2013}.   
In addition, its modeling mechanism is convenient for interpretation: In a social network, the latent position for an individual could represent the amount of social activities he/she tends to join, and individuals that are more involved in the same activity are more likely to make acquaintance. 
In light of the structural simplicity and the approximation power of the random dot product graph model, it has become an object that worth studying by itself as well as a useful building block for inferences in more general latent positions graphs. 
 For a thorough review of recent advances in statistical inference on the random dot product graph model, the readers are referred to \cite{JMLR:v18:17-448}. 

The techniques for statistical analysis of the random dot product graph model so far have been focusing on spectral methods based on the observed adjacency matrix or its graph Laplacian matrix. For example, the authors of \cite{6565321} proposed to directly estimate the latent positions using the \emph{adjacency spectral embedding}
and proved its consistency. 
In \cite{athreya2016limit} the authors further established the asymptotic distribution of the adjacency spectral embedding. For the normalized graph Laplacian matrix of the adjacency matrix, the authors of \cite{tang2018} developed the asymptotic distribution of 
the \emph{Laplacian spectral embedding}, and made a thorough comparison between the adjacency spectral embedding and the Laplacian spectral embedding under various contexts. The well-developed theory for spectral methods for the random dot product graph model lays a theoretical foundation for a variety of subsequent inference tasks, including spectral clustering for stochastic block models \citep{sussman2012consistent,lyzinski2014,7769223}, vertex classification and nomination \citep{6565321,lyzinski2017consistent, 7769223}, nonparametric graph hypothesis testing \citep{tang2017}, multiple graph inference \citep{levin2017central,wang2017joint,tang2017robust}, and manifold learning \citep{athreya2018estimation}. 

Despite the marvelous success of spectral methods for the random dot product graph model, it remains open whether these spectral estimators are minimax-optimal for estimating the latent positions with respect to suitable loss functions. Taking one more step back, a more fundamental question is: What is the minimax risk for estimating the latent positions, and how can one achieve it by constructing a useful estimator? In this paper we provide a detailed answer to this question. 
Unlike the aforementioned spectral-based approaches, we take advantage of the Bernoulli likelihood information of the observed graph adjacency matrix and design a fully likelihood-based Bayesian approach, referred to as the \emph{posterior spectral embedding}. Not only do we establish a minimax lower bound for estimating the latent positions, but we also show that this lower bound is achievable through the proposed estimation procedure. Specifically, we show that under mild conditions, for a wide class of prior distributions of the latent positions, the posterior spectral embedding both yields the rate-optimal contraction and produces a minimax-optimal point estimator for the latent positions. To the best of our knowledge, our work represents the first effort in the literature of the random dot product graph model that leverages  a likelihood-based Bayesian approach with theoretical guarantee.
In addition, 
as a sample application, we improve an existing result regarding clustering in stochastic block models by showing that the number of mis-clustered vertices can be reduced from $O(\log n)$ \citep{sussman2012consistent} to $O(1)$ using the proposed posterior spectral embedding.  

The rest of this paper is arranged as follows. In Section \ref{sec:preliminaries} we review the random dot product graph model and establish a minimax lower bound for estimating the latent positions. Section \ref{sec:optimal_bayesian_estimation} elaborates on the proposed posterior spectral embedding for the random dot product graph model and its theoretical properties. The application to clustering in stochastic block models is discussed in Section \ref{sec:sample_application_community_detection_in_stochastic_blockmodels}. 
Section \ref{sec:spectral_based_bayesian_estimation} presents the analysis of a spectral-based Gaussian spectral embedding approach that can be treated as a Bayesian analogy of the adjacency spectral embedding. We illustrate the proposed approach through extensive simulation studies and the analysis of a Wikipedia graph dataset in Section \ref{sec:numerical_examples}. Further discussion is provided in Section \ref{sec:discussion}. 

\noindent\textbf{Notations: }We use $\eye_d$ to denote the $d\times d$ identity matrix. For an integer $p$, $1\leq p\leq \infty$ and a $d$-dimensional Euclidean vector $\bx = [x_1,\ldots,x_d]\transpose$, we use $\|\bx\|_p$ to denote its $\ell_p$-norm, and when $p = \infty$, $\|\bx\|_\infty = \max_{k=1,\ldots,d}|x_k|$. For a vector $\bx = [x_1,\ldots,x_p]\transpose\in\mathbb{R}^p$, the vector inequality $\bx\geq 0$ represents $x_k\geq 0$ for $k = 1,\ldots,p$. For an $n\times d$ matrix $\bX$ we use $(\bX)_{*k}$ to denote the $n$-dimensional vector formed by the $k$th column of $\bX$. For a positive integer $n$, we denote $[n]$ the set of integers $[n] = \{1,2,\ldots,n\}$. For any two positive integers $n,d$ with $n\geq d$, $\mathbb{O}(n,d)$ denotes the set of all orthogonal $d$-frames in $\mathbb{R}^d$, \emph{i.e.}, $\mathbb{O}(n, d) = \{\bU\in\mathbb{R}^{n\times d}:\bU\transpose\bU = \eye_d\}$, and when $n = d$, we use the notation $\mathbb{O}(d) = \mathbb{O}(d, d)$. 
The symbols $\lesssim$ and $\gtrsim$ mean the corresponding inequality up to a constant, \emph{i.e.}, $a \lesssim b$ ($a \gtrsim b$, resp.) if $a\leq Cb$ ($a \geq Cb$) for some constant $C > 0$. 
We write $a\asymp b$ if $a\lesssim b$ and $a\gtrsim b$. For a $d\times d$ positive definite matrix $\bDelta$, we use $\lambda_k(\bDelta)$ to denote its $k$th largest eigenvalue, and for any rectangular matrix $\bX$, we use $\sigma_k(\bX)$ to denote its $k$th largest singular value. We say that a sequence of events $(E_n)_{n = 1}^\infty$ occurs ``almost always'', if $\prob(\bigcup_{n = 1}^\infty\bigcap_{k = n}^\infty E_k) = 1$. 

\section{Preliminaries} 
\label{sec:preliminaries}
We first introduce the background of the random dot product graph model. Let the space of $d$-dimensional latent positions be $\calX = \{\bx\in\mathbb{R}^d:\|\bx\|_2\leq 1, \bx\geq0\}$, where $\|\cdot\|_2$ is the $\ell_2$-norm of an Euclidean vector. Let $\bX = [\bx_1,\ldots,\bx_n]\transpose\in\mathbb{R}^{n\times d}$ be an $n\times d$ matrix, where $\bx_1,\ldots,\bx_n\in\calX$ represent the latent positions of $n$ vertices in a graph. A symmetric random binary matrix $\bY=[y_{ij}]_{n\times n}\in\{0,1\}^{n\times n}$ is said to be the adjacency matrix of a random dot product graph with latent position matrix $\bX$, denoted by $\bY\sim\mathrm{RDPG}(\bX)$, if the random variables $y_{ij}\sim\mathrm{Bernoulli}(\bx_i\transpose\bx_j)$ independently, $1\leq i\leq j\leq n$. Namely, 
\[
p(\bY\mid\bX) = \prod_{i\leq j}(\bx_i\transpose\bx_j)^{y_{ij}}(1 - \bx_i\transpose\bx_j)^{1 - y_{ij}}.
\]
\begin{example}[Stochastic block model]
The most popular example of the random dot product graph model is the stochastic block model with a positive semidefinite block probability matrix. Formally, a symmetric random adjacency matrix $\bY$ is drawn from a stochastic block model with a block probability matrix $\bB = [b_{kl}]_{K\times K}\in[0, 1]^{K\times K}$ and a block assignment function $\tau:[n]\to[K]$, denoted by $\bY\sim\mathrm{SBM}(\bB,\tau)$, if the random variables $y_{ij}\sim\mathrm{Bernoulli}(b_{\tau(i)\tau(j)})$ independently for $1\leq i\leq j\leq n$. Namely, vertices in the same block have the same connecting probability. 
When $\bB$ is positive semidefinite with rank $d$ (and hence implicitly $d\leq K$), there exists a matrix $\bL\in\mathbb{R}^{K\times d}$ such that $\bB = \bL\bL\transpose$. By converting the block assignment function $\tau$ into an $n\times K$ matrix $\bZ = [\mathbbm{1}(\tau(i) = k)]_{i\in[n],k\in[K]}$, we obtain $\expect_\bX(\bY) = (\bZ\bL)\transpose(\bZ\bL)$, and therefore, $\mathrm{SBM}(\bB,\tau)$ coincides with $\mathrm{RDPG}(\bX)$ through the reparametrization $\bX = \bZ\bL$. The positive semidefinite stochastic block model will be revisited in Section \ref{sec:sample_application_community_detection_in_stochastic_blockmodels}. 
\end{example}

\begin{remark}[Intrinsic non-identifiability]
We remark that the latent position matrix $\bX$ cannot be uniquely determined by the distribution $\bY\sim\mathrm{RDPG}(\bX)$, \emph{i.e.}, $\bX$ is not identifiable. In fact, for any orthogonal matrix $\bW\in\mathbb{R}^{d\times d}$, the two distributions $\mathrm{RDPG}(\bX)$ and $\mathrm{RDPG}(\bX\bW)$ are identical, since for any $i,j\in[n]$, $\bx_i\transpose\bx_j = (\bW\bx_i)\transpose(\bW\bx_j)$. In addition, any $d$-dimensional random dot product graph model can be embedded into a $d'$-dimensional random dot product graph model for any $d' > d$, in the sense that there exists a $d'$-dimensional latent position matrix $\bX'\in\mathbb{R}^{n\times d'}$, such that the two distributions $\mathrm{RDPG}(\bX)$ and $\mathrm{RDPG}(\bX')$ are identical. The latter source of non-identifiability, however, can be eliminated by requiring the columns of $\bX$ being linearly independent. 
\end{remark}

\begin{remark}[Choice of orthogonal transformation and loss function]
Since the latent position matrix $\bX$ can only be identified up to an orthogonal transformation, one needs to properly rotate any estimator $\widehat\bX$ to align with the underlying true $\bX$. The alignment matrix can be found by the solution to the orthogonal Procrustes problem $\bW^* = \arginf_{\bW}\|\widehat\bX\bW - \bX\|_{\mathrm{F}}$, where the infimum ranges over the set of all orthogonal matrices in $\mathbb{R}^{d\times d}$ \citep{JMLR:v18:17-448}. 
In particular,  $\bW^*$ has a closed-form expression. 
Consequently, in this work we consider the loss function 
\[
L_{\mathrm{F}}(\widehat\bX,\bX) = \frac{1}{n}\inf_{\bW\in\mathbb{O}(d)}\|\widehat\bX - \bX\bW\|_{\mathrm{F}}^2 = \inf_{\bW\in\mathbb{O}(d)}\frac{1}{n}\sum_{i = 1}^n\|\widehat\bx_{i} - \bW\transpose\bx_{i}\|_2^2,
\] 
where $\widehat\bX = [\widehat\bx_1,\ldots,\widehat\bx_{n}]\transpose$. This loss function can also be interpreted as the average error of the estimated latent positions $\widehat\bx_1,\ldots,\widehat\bx_n$ of all $n$ vertices after the appropriate orthogonal alignment. 
\end{remark}

Since the adjacency matrix $\bY$ can be viewed as the sum of a low-rank signal matrix $\bX\bX\transpose$ and a noise matrix $\bE = (e_{ij})_{n\times n}$, the elements of which are centered Bernoulli random variables ($e_{ij}\sim\mathrm{Bernoulli}(\bx_i\transpose\bx_j) - \bx_i\transpose\bx_j$ independently for $1\leq i\leq j\leq n$), 
 the authors of \cite{6565321} argued for embedding the adjacency matrix $\bY$ into $\mathbb{R}^{n\times d}$ by solving the least-squared problem $\widehat\bX = \argmin_{\bX\in\mathbb{R}^{n\times d}}\|\bY - \bX\bX\transpose\|_{\mathrm{F}}^2$. The resulting estimator $\widehat\bX$ is referred to as the \emph{adjacency spectral embedding} of $\bY$ \citep{sussman2012consistent} and  denoted by $\widehat\bX_{\mathrm{ASE}}$. Theoretical properties of the adjacency spectral embedding have been explored \citep{sussman2012consistent,lyzinski2014,7769223}. Notably, the following convergence result of $\widehat\bX_{\mathrm{ASE}}$ was established in \cite{6565321}.

\begin{theorem}[\citealp{6565321}]
\label{thm:Sussman_convergence_ASE}
Suppose $\bY\sim\mathrm{RDPG}(\bX)$ for some $\bX\in\mathbb{R}^{n\times d}$ and $(1/n)\bX\transpose\bX\to \bDelta$ for some positive definite $\bDelta\in\mathbb{R}^{d\times d}$ with distinct eigenvalues $\lambda_1 > \ldots > \lambda_d > 0$ as $n\to\infty$. Assume that there exists $\delta > 0$ such that $\min_{j\neq k}|\lambda_j(\bDelta) - \lambda_k(\bDelta)| > 2\delta$ and $\lambda_d(\bDelta) > 2\delta$. Then with probability greater than $1 - 2(d^2 + 1)/n^2$,
\begin{align}\label{eqn:ASE_rate_Sussman}
\frac{1}{n}\inf_{\bW\in\mathbb{O}(d)}\|\widehat\bX_{\mathrm{ASE}} - \bX\bW\|_{\mathrm{F}}^2\leq \frac{12d^2\log n}{\delta^3 n}.
\end{align}
\end{theorem}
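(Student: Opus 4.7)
The plan is to control the estimation error by separately tracking eigenvector and eigenvalue perturbations of $\bY$ relative to its expectation $\bP = \bX\bX\transpose$. Write the eigendecomposition $\bP = \bU_\bP \bS_\bP \bU_\bP\transpose$ with $\bU_\bP \in \mathbb{O}(n,d)$ and diagonal $\bS_\bP$, so that $\bX = \bU_\bP \bS_\bP^{1/2} \bW_0$ for some $\bW_0 \in \mathbb{O}(d)$. The ASE, being the best rank-$d$ positive semidefinite approximation to $\bY$ in Frobenius norm, admits the representation $\widehat\bX_{\mathrm{ASE}} = \widehat\bU_\bY \widehat\bS_\bY^{1/2}$, where $\widehat\bU_\bY$ and diagonal $\widehat\bS_\bY$ collect the top $d$ eigenvectors and eigenvalues of $\bY$. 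Taking the alignment $\bW = \bW_0\transpose \bR$ for a suitable $\bR \in \mathbb{O}(d)$ produced by Davis--Kahan, the triangle inequality splits the quantity to bound into an eigenvector-rotation part and an eigenvalue-perturbation part.

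First, I would establish a high-probability bound on the noise $\bE = \bY - \bP$. Since $\bE$ is symmetric with centered, independent, uniformly bounded entries, a union bound over an $\epsilon$-net of the unit sphere combined with Hoeffding's inequality gives $\|\bE\|_2 \lesssim \sqrt{n\log n}$ with probability at least $1 - 2(d^2+1)/n^2$ once absolute constants are tracked carefully. Second, the hypothesis $(1/n)\bX\transpose\bX \to \bDelta$ together with $\lambda_d(\bDelta) > 2\delta$ and $\min_{j\neq k}|\lambda_j(\bDelta) - \lambda_k(\bDelta)| > 2\delta$ implies that for $n$ sufficiently large every spectral gap of $\bP$, including the gap between $\lambda_d(\bP)$ and zero, is at least $n\delta$. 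Davis--Kahan then produces a rotation $\bR \in \mathbb{O}(d)$ with $\|\widehat\bU_\bY - \bU_\bP \bR\|_\mathrm{F} \lesssim \sqrt{d}\|\bE\|_2/(n\delta)$, while Weyl's inequality combined with $|\sqrt{a}-\sqrt{b}|\leq |a-b|/(\sqrt{a}+\sqrt{b})$ yields $\|\widehat\bS_\bY^{1/2} - \bS_\bP^{1/2}\|_\mathrm{F} \lesssim \sqrt{d}\|\bE\|_2/\sqrt{n\delta}$.

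The pieces are assembled through
\[
\bigl\|\widehat\bU_\bY \widehat\bS_\bY^{1/2} - \bU_\bP \bS_\bP^{1/2} \bR\bigr\|_\mathrm{F} \leq \bigl\|\widehat\bU_\bY - \bU_\bP \bR\bigr\|_\mathrm{F}\bigl\|\widehat\bS_\bY^{1/2}\bigr\|_2 + \bigl\|\widehat\bS_\bY^{1/2} - \bR\transpose \bS_\bP^{1/2} \bR\bigr\|_\mathrm{F},
\]
using $\|\widehat\bS_\bY^{1/2}\|_2 \lesssim \sqrt{n}$ and substituting $\|\bE\|_2^2 \lesssim n\log n$, so that dividing the squared error by $n$ produces a bound of the stated order. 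The main obstacle is the second summand on the right-hand side: the Davis--Kahan rotation $\bR$ does not commute with the diagonal matrix $\bS_\bP^{1/2}$, so I must show that $\bR\transpose \bS_\bP^{1/2} \bR$ is close to $\widehat\bS_\bY^{1/2}$ itself. The strict separation $|\lambda_j(\bDelta)-\lambda_k(\bDelta)|>2\delta$ is exactly what is needed here: it forces $\bR$ to lie near a signed diagonal matrix, so that the off-diagonal part of $\bR\transpose \bS_\bP^{1/2} \bR - \bS_\bP^{1/2}$ can be absorbed into the Weyl estimate. Carefully tracking how factors of $\delta$ propagate through the eigengap in Davis--Kahan, the square-root perturbation from Weyl, and this near-commutation argument is precisely what produces the characteristic $\delta^{-3}$ dependence, with the factor $12$ arising from bookkeeping of the absolute constants in the Hoeffding, Davis--Kahan, and Weyl steps.
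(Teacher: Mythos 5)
This theorem is quoted from \citet{6565321}; the paper you are reading cites it as background and does not give a proof of it, so there is no in-paper argument to compare against. Your sketch is nonetheless a reasonable outline of the standard perturbation-theoretic route behind the Sussman--Tang--Fishkind--Priebe bound: concentrate $\|\bY-\bP\|_2$, apply Davis--Kahan to the top-$d$ invariant subspace, apply Weyl to the eigenvalues, and split $\widehat\bU_\bY\widehat\bS_\bY^{1/2}-\bU_\bP\bS_\bP^{1/2}\bR$ by the triangle inequality. You also correctly isolate the principal subtlety --- the Davis--Kahan rotation $\bR$ does not commute with $\bS_\bP^{1/2}$, and the distinct-eigenvalue hypothesis is what pins $\bR$ near a signed diagonal so the commutator term can be absorbed.

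Two caveats. First, your account of where the $\delta^{-3}$ and $d^2$ exponents come from is too loose to be convincing: if one carries the eigengap $\gtrsim n\delta$ through Davis--Kahan, the smallest eigenvalue $\gtrsim n\delta$ through the square-root/Weyl step, and the near-diagonality of $\bR$ as you describe, the dominant squared error comes out of order $d\log n/(\delta^2 n)$, which is sharper than what the theorem states. The specific $12\,d^2/\delta^{3}$ in \citet{6565321} is an artifact of their column-by-column bookkeeping (they prove the per-column estimate \eqref{eqn:Eigenvector_rate_Sussman} and then combine with crude constants), not something your subspace-level decomposition forces; a tighter bound still proves the inequality, but you should not present the exponents as inevitable consequences of your steps. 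Second, the failure probability $1-2(d^2+1)/n^2$ carries a $d^2$ that cannot emerge from a single $\epsilon$-net spectral-norm concentration event alone; in the source it arises from an additional union bound over the entries of a $d\times d$ matrix such as $\bU_\bP\transpose\bE\bU_\bP$, and your sketch currently attributes the whole failure probability to the operator-norm control of $\bE$, which leaves that factor unexplained.
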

Theorem \ref{thm:Sussman_convergence_ASE}   implies that after an orthogonal alignment of $\widehat\bX_{\mathrm{ASE}}$ towards $\bX$, the adjacency spectral embedding yields a convergence rate 
$L_{\mathrm{F}}(\widehat\bX_{\mathrm{ASE}}, \bX) = o_{\prob}\left({(M_n\log n)}{/n}\right)$
for arbitrary $M_n\to\infty$ ($(M_n)_{n = 1}^\infty$ should be interpreted as a sequence converging to $\infty$ arbitrarily slowly). 
Nevertheless, as will be seen in Section \ref{sec:optimal_bayesian_estimation}, this rate is sub-optimal and, interestingly, can be improved by a Bayes estimator instead. Furthermore, it is unclear what is the minimax risk for estimating the latent position matrix $\bX$ with respect to the loss $L_{\mathrm{F}}(\cdot,\cdot)$ and how to construct a  estimator to achieve the minimax rate, which we will address in this paper. 
The distinct eigenvalues condition will also be relaxed in Section \ref{sec:optimal_bayesian_estimation}. 
We begin approaching our main goal by first establishing the following minimax lower bound.

\begin{theorem}\label{thm:minimax_LB}
Let $\bY\sim\mathrm{RDPG}(\bX)$ for some $\bX = [\bx_1,\ldots,\bx_n]\transpose$, $\bx_1,\ldots,\bx_n\in\calX$. Assume that $d$ is fixed and does not change with $n$. Let $\widehat\bX$ be an estimator of the latent position matrix $\bX$ satisfying $\|\widehat\bX\|_{\mathrm{F}} \lesssim \sqrt{n}$ with probability one. Then
\begin{align}\label{eqn:minmax_LB}
\inf_{\widehat\bX}\sup_{\bX\in\calX^n}\expect_\bX\left\{\frac{1}{n}\inf_{\bW\in\mathbb{O}(d)}\|\widehat\bX - \bX\bW\|_{\mathrm{F}}^2\right\}\gtrsim\frac{1}{n}.
\end{align}
\end{theorem}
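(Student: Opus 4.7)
The plan is to apply Assouad's lemma to a hypercube of hypotheses obtained by perturbing each row of a common base matrix. Heuristically, $\bX$ carries $\Theta(n)$ free row parameters, each identified from $\Theta(n)$ Bernoulli observations, so the Cram\'er--Rao heuristic predicts per-vertex squared error $\Theta(1/n)$, and hence normalized total squared Frobenius error $\Theta(1/n)$, which is exactly the target rate.

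To build the hypercube, I would fix a constant $c \in (0, 1/2)$, set $\bx_0 = c\be_1 \in \calX$, and pick a nonnegative unit vector $\bdelta_0 \in \mathbb{R}^d$ with $\bx_0^\transpose \bdelta_0 > 0$ (for instance $\bdelta_0 = (a, b, 0, \ldots, 0)^\transpose$ with $a, b > 0$ and $a^2 + b^2 = 1$). For each $\epsilon \in \{0, 1\}^n$ and a small scale $\eta > 0$, let $\bX_\epsilon$ have rows $\bx_i^{(\epsilon)} = \bx_0 + \epsilon_i \eta \bdelta_0$; for $\eta$ sufficiently small all rows lie in $\calX$ and $\|\bX_\epsilon\|_\mathrm{F} \leq \sqrt{n}$, so the estimator restriction $\|\widehat\bX\|_\mathrm{F} \lesssim \sqrt{n}$ is compatible with the construction.

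Two estimates drive the argument. \emph{(i)} For neighbors $\epsilon \sim \epsilon'$ differing in a single coordinate $k$, the Bernoulli log-likelihood factorizes over edges and only the $n-1$ edges incident to vertex $k$ contribute to $\mathrm{KL}(\prob_{\bX_\epsilon} \parallel \prob_{\bX_{\epsilon'}})$; each such edge probability lies in a closed sub-interval of $(0, 1)$ and shifts by $\eta \bdelta_0^\transpose \bx_j^{(\epsilon')} = O(\eta)$, so a second-order Taylor expansion of the Bernoulli KL yields $\mathrm{KL} \lesssim n\eta^2$. \emph{(ii)} For the loss, plugging in $\bW = \eye_d$ yields the upper bound $L_\mathrm{F}(\bX_\epsilon, \bX_{\epsilon'}) \leq \eta^2 H(\epsilon, \epsilon')/n$, where $H$ denotes Hamming distance; what we need is a matching lower bound of the same order after taking the infimum over $\bW$.

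The main obstacle is exactly this Procrustes alignment step: showing that $\inf_\bW \|\bX_\epsilon - \bX_{\epsilon'}\bW\|_\mathrm{F}^2 \gtrsim \eta^2 H(\epsilon, \epsilon')$ for all $\epsilon, \epsilon'$. The intuition is that any $\bW \neq \eye_d$ incurs a penalty of order $n\|(\bW - \eye_d)\bx_0\|^2$ from the dominant base component, which overwhelms any $O(n\eta)$ savings that rotation of the perturbation directions could produce; I would formalize this via a Davis--Kahan-type stability argument showing that the leading singular direction of each $\bX_\epsilon$ stays within $O(\eta)$ of $\bx_0/\|\bx_0\|$ uniformly in $\epsilon$ while the leading singular value stays $\Theta(\sqrt{n})$. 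Choosing $\eta^2 \asymp 1/n$ so that the per-neighbor KL is $O(1)$ and then invoking the squared-loss form of Assouad's lemma delivers $\inf_{\widehat\bX} \sup_\epsilon \expect_{\bX_\epsilon} L_\mathrm{F}(\widehat\bX, \bX_\epsilon) \gtrsim n \cdot (\eta^2/n) = \eta^2 \asymp 1/n$, which is precisely \eqref{eqn:minmax_LB}.
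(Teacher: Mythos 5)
Your proposal is a genuinely different route from the paper's. The paper uses the Fano-type lemma of Cai et al.\ (2013, Proposition 3) with two-sided covering-number estimates for a one-parameter family (varying only the first coordinate of each latent position), and — crucially — it works exclusively in the rotation-invariant pseudo-metric $\rho(\bX_1,\bX_2) = n^{-1}\|\bX_1\bX_1\transpose - \bX_2\bX_2\transpose\|_{\mathrm{F}}$. The lower bound on $\expect_\bX\{n^{-2}\|\widehat\bX\widehat\bX\transpose - \bX\bX\transpose\|_{\mathrm{F}}^2\}$ is converted to the Procrustes loss only in the last two lines, via the one-sided comparison $\|\widehat\bX\widehat\bX\transpose - \bX\bX\transpose\|_{\mathrm{F}} \lesssim \sqrt{n}\,\inf_\bW\|\widehat\bX-\bX\bW\|_{\mathrm{F}}$, which uses the assumption $\|\widehat\bX\|_{\mathrm{F}}\lesssim\sqrt n$. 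This choice of working parametrization is precisely what lets the paper avoid ever proving a lower bound on a Procrustes-aligned distance.

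Your Assouad hypercube is a natural alternative, your KL estimate (i) is correct, and your identification of the Procrustes lower bound as the real obstacle is exactly right. But you have not actually supplied that step, and it is not as innocuous as the sketch suggests. The difficulty is not merely a Davis--Kahan stability statement about leading singular subspaces: the loss is $\inf_{\bW}\sum_i\|m_i\eta\bdelta_0 - \bT\transpose\bx_0 - \epsilon_i'\eta\bT\transpose\bdelta_0\|^2$ with $\bT = \bW-\eye_d$, and the minimizing $\bW$ is allowed to use a translation $\bv = \bT\transpose\bx_0$ to partially absorb the mean of the perturbations $m_i\eta\bdelta_0$; the constraint available is only the sphere condition $\bx_0\transpose\bv = -\tfrac12\|\bv\|^2$ coming from orthogonality of $\bW$. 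One must show this geometric constraint keeps the infimum at $\gtrsim\eta^2 H(\epsilon,\epsilon')$ \emph{uniformly over all pairs} $\epsilon,\epsilon'$ (not just neighbors), and for $H(\epsilon,\epsilon')$ close to $n$ the ``obvious'' bound $\tfrac12\eta^2 H - 2\eta|\sum_i m_i|\,\|\bv\|$ degrades; you need the sphere constraint to rule out the near-cancelling $\bv$, plus control of the $\bT\transpose\bdelta_0$ cross-term. This can probably be pushed through (the key is that $\bx_0\transpose\bdelta_0>0$ makes the unconstrained optimum infeasible), but it is a several-line Lagrange/projection computation per case, not a one-sentence appeal to Davis--Kahan. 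A cleaner fix, if you want to keep Assouad, is to mimic the paper: prove the lower bound in the rotation-invariant metric $\rho$ (where the hypercube separation reduces to an easy quadratic-form computation on the $(i,j)$ entries of $\bX_\epsilon\bX_\epsilon\transpose$), and then convert to the Procrustes loss at the end using the boundedness of $\widehat\bX$, exactly as the paper does. As written, though, your argument has a genuine gap at step (ii).
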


The above minimax lower bound does not necessarily result in a minimax rate of convergence for estimating the latent positions. Nevertheless, if we assume the existence of an estimator $\widehat\bX$ with $\expect_\bX\{(1/n)\inf_\bW\|\widehat\bX - \bX\bW\|_{\mathrm{F}}^2\}\lesssim 1/n$ (which will be rigorously proved in Section \ref{sec:optimal_bayesian_estimation}), then simply applying  Markov's inequality yields
$(1/n)\inf_\bW\|\widehat\bX - \bX\bW\|_{\mathrm{F}}^2 = o_\prob(M_n/n)$ for arbitrary sequence $M_n\to\infty$. This observation suggests that the convergence rate derived in \cite{6565321} for the adjacency spectral embedding might be sub-optimal and motivates us to pursue an estimator achieving the minimax lower bound \eqref{eqn:minmax_LB}. 


\section{A Likelihood-based Posterior Spectral Embedding} 
\label{sec:optimal_bayesian_estimation}
Although it is intuitive and computationally convenient to directly estimate the latent position matrix $\bX$ by the popular spectral-based approaches (\emph{e.g.}, the adjacency spectral embedding), the Bernoulli likelihood information of the adjacency matrix is neglected. On the other hand, likelihood-based methods for the random dot product graph model remain under-explored. In particular, neither the existence nor the uniqueness of the maximum likelihood estimator for $\bX$ is addressed.  In this section, we  develop a Bayesian approach for estimating the latent positions by taking advantage of the Bernoulli likelihood information.


Recall that the space of the latent positions is $\calX = \{\bx\in\mathbb{R}^d:\|\bx\|_2\leq 1,\bx\geq 0\}$. Let $\bX_0 = [\bx_{01},\ldots,\bx_{0n}]\transpose$ be the true latent position matrix, and  $\bX = [\bx_1,\ldots,\bx_n]\transpose$ be the latent position matrix to be assigned a prior distribution $\Pi$. Whenever we consider the distribution $\Pi$, $\bX$ is treated as a random matrix taking values in the space $\calX^n = \{\bX = [\bx_1,\ldots,\bx_n]\transpose:\bx_i\in\calX,i = 1,\ldots,n\}$. The prior distribution $\Pi$ on $\bX$ is constructed by assuming that $\bx_1,\ldots,\bx_n$ independently follow a distribution with a density function $\pi_\bx$ supported on $\calX$, and we denote it by $\bX\sim\Pi$. In this work we only require $\pi_\bx$ to be bounded away from $0$ and $\infty$ over $\calX$ (\emph{e.g.}, the uniform distribution on $\calX$). It follows directly from the Bayes formula that the posterior distribution of $\bX$ is 
\begin{align*}
\Pi(\bX\in\calA\mid\bY) = \frac{N_n(\calA)}{D_n},\quad\text{where }
N_n(\calA) = \int_{\calA}\prod_{i\leq j}\frac{p(y_{ij}\mid\bX)}{p(y_{ij}\mid\bX_0)}\Pi(\mathrm{d}\bX),\quad D_n = N_n(\calX),
\end{align*}
and $p(y_{ij}\mid\bX) = (\bx_i\transpose\bx_j)^{y_{ij}}(1 - \bx_i\transpose\bx_j)^{1 - y_{ij}}$, for any measurable set $\calA\subset\calX^n$. Clearly, the posterior distribution of $\bX$  incorporates the Bernoulli likelihood information through the Bayes formula, and we refer to $\Pi(\bX\in\cdot\mid\bY)$ as the \emph{posterior spectral embedding} of $\bX$.

The following theorem, which is the key result of this work, shows that under mild regularity conditions, the posterior contraction of the latent positions is minimax-optimal. The proof is deferred to the supplementary material. 
\begin{theorem}\label{thm:root_n_contraction_BRDPG}
Let $\bY\sim\mathrm{RDPG}(\bX_0)$ for some $\bX_0 = [\bx_{01},\ldots,\bx_{0n}]\transpose\in\mathbb{R}^{n\times d}$, and the prior $\Pi$ be described as above. Assume that $(1/n)(\bX_0\transpose\bX_0)\to\bDelta$ as $n\to\infty$ for some positive definite $\bDelta\in\mathbb{R}^{d\times d}$. If $d$ is fixed (\emph{i.e.}, $d$ does not change with $n$), and $\delta\leq\min_{i,j}\bx_{0i}\transpose\bx_{0j}\leq \max_{i,j}\bx_{0i}\transpose\bx_{0j}\leq 1-\delta$ for some constant $\delta\in(0, 1/2)$ independent of $n$, then there exist 
some large constants $M_1, M_2 > 0$ depending on $\bDelta$ and the prior $\pi_\bx$, such that
\begin{align*}
\expect_0\left\{\Pi\left(\frac{1}{n}\|\bX\bX\transpose - \bX_0\bX_0\transpose\|_{\mathrm{F}}   >\frac{M_1}{\sqrt{n}}\mathrel{\Big|}\bY\right)\right\}&\leq 8\exp\left(-\frac{1}{2}nd\right),\\
\expect_0\left\{\Pi\left(\frac{1}{n}\inf_{\bW\in\mathbb{O}(d)}\|\bX - \bX_0\bW\|^2_{\mathrm{F}}>\frac{M_2}{{n}}\mathrel{\Big|}\bY\right)\right\} &\leq 8\exp\left(-\frac{1}{2}nd\right)
\end{align*}
for sufficiently large $n$.
\end{theorem}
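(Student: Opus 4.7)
The proof has two parts: first, establish posterior contraction of the mean matrix $\bX\bX\transpose$ at Frobenius rate $\sqrt{n}$; second, use a deterministic matrix-factorization inequality to transfer this into a Procrustes-type bound on $\bX - \bX_0\bW$. For the first part I would apply the posterior contraction framework for independent but non-identically distributed observations of Ghosal and van der Vaart (2007). Writing $\Pi(\calA\mid\bY) = N_n(\calA)/D_n$, the strategy is to lower-bound $D_n$ through a prior-mass calculation on a KL neighborhood and to upper-bound $N_n(\calA^c)$ through exponentially decaying likelihood-ratio tests.

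The key Kullback--Leibler computation uses the assumption $\delta \leq \bx_{0i}\transpose\bx_{0j} \leq 1-\delta$ to give
\[
K(P_{\bX_0}, P_\bX) \asymp V(P_{\bX_0}, P_\bX) \asymp \|\bX\bX\transpose - \bX_0\bX_0\transpose\|_\mathrm{F}^2,
\]
so the KL neighborhood is a Frobenius ball around $\bX_0\bX_0\transpose$. Since the prior density $\pi_\bx$ is bounded below on $\calX$, I would lower-bound the prior mass of $\{\bX : \|\bX\bX\transpose - \bX_0\bX_0\transpose\|_\mathrm{F}^2 \leq Cn\}$ by computing the volume of a tubular neighborhood of the $d(d-1)/2$-dimensional orthogonal orbit $\{\bX_0\bW : \bW\in\mathbb{O}(d)\}$, then combine with a Bernstein-type concentration of the log-likelihood-ratio sum on that neighborhood to obtain $D_n \geq \exp(-cn)$ with $\prob_0$-probability exponentially close to $1$. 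For the numerator, any $\bX_1$ with $\|\bX_1\bX_1\transpose - \bX_0\bX_0\transpose\|_\mathrm{F} > t\sqrt{n}$ makes the log-likelihood ratio a sum of independent bounded summands with expectation $\asymp -t^2 n$, so Bernstein's inequality yields a likelihood-ratio test $\phi_{\bX_1}$ whose type-I and type-II errors are both at most $\exp(-ct^2 n)$. A Frobenius-metric covering of $\calA^c$ (compactness of $\calX^n$ gives log-covering number $\lesssim nd\log(1/\epsilon)$) together with a dyadic peeling argument in the annuli $\{2^k\sqrt{n}\leq\|\bX\bX\transpose - \bX_0\bX_0\transpose\|_\mathrm{F} \leq 2^{k+1}\sqrt{n}\}$ aggregates these into the required global test.

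For the second statement I would argue deterministically on the high-probability event from the first bound. The assumption $(1/n)\bX_0\transpose\bX_0 \to \bDelta \succ 0$ forces $\sigma_d(\bX_0) \asymp \sqrt{n}$, and a standard matrix-factorization inequality (e.g., the lemma of Tu et al.\ in low-rank matrix recovery) gives, for $\bX$ outside a set of negligible posterior mass on which $\sigma_d(\bX)$ collapses,
\[
\inf_{\bW\in\mathbb{O}(d)}\|\bX - \bX_0\bW\|_\mathrm{F}^2 \lesssim \frac{1}{\sigma_d(\bX_0)^2}\|\bX\bX\transpose - \bX_0\bX_0\transpose\|_\mathrm{F}^2 \lesssim \frac{M_1^2 n}{n} = M_1^2.
\]
Dividing by $n$ delivers the $M_2/n$ rate, and the claimed probability $8\exp(-nd/2)$ matches the smaller of the testing and prior-mass exponents.

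The main obstacle is producing a prior-mass lower bound of order $\exp(-cn)$ \emph{without} an extra $\log n$ factor: a plain Lebesgue-volume argument for a Euclidean ball in $\mathbb{R}^{nd}$ yields only $\exp(-Cnd\log n)$, which would propagate into the final rate. Removing the logarithm requires exploiting the $d(d-1)/2$-dimensional orthogonal orbit (so that $\bX\bX\transpose$ is constant along an auxiliary compact manifold, and only the transverse directions contribute to the effective volume penalty) together with a careful local-curvature analysis of $\bX \mapsto \bX\bX\transpose$ near full-rank $\bX_0$. Matching this sharpened prior-mass bound to a correspondingly tight testing bound, in the exact exponents that produce the $\exp(-nd/2)$ constant, is the most delicate step.
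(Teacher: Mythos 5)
Your overall framework (Ghosal--van der Vaart testing plus prior-mass, followed by a Procrustes transfer through the spectral gap) is the same as the paper's, and the second half of your argument -- using $\sigma_d(\bX_0)\asymp\sqrt n$ and a low-rank factorization perturbation bound to pass from $\|\bX\bX^{\mathrm T}-\bX_0\bX_0^{\mathrm T}\|_{\mathrm F}$ to $\inf_\bW\|\bX-\bX_0\bW\|_{\mathrm F}$ -- is essentially what the paper does (via its Lemma A.1 matrix decomposition). The gaps are in the first half.

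\emph{Your fix for the $\log n$ factor does not work.} You propose to lower-bound $D_n$ by $e^{-cn}$ by computing the prior mass of a tubular neighborhood of the orbit $\{\bX_0\bW:\bW\in\mathbb O(d)\}$. But that orbit has dimension $d(d-1)/2$, a constant, while the ambient parameter space has dimension $nd\to\infty$. Integrating out the orbit directions saves $O(1)$ degrees of freedom, so the prior mass of any $O(1/\sqrt n)$-size KL neighborhood of the orbit is still of order $\exp\{-(nd/2)\log n + O(n)\}$; the $\log n$ is not removed. The mechanism that actually removes the logarithm in the paper (and in Theorem~1 of Ghosal--van der Vaart 2007, which the paper invokes) is a \emph{ratio} of prior masses: both the numerator shell $\Pi\{\bX:j\eps_n<\rho(\bX,\bX_0)\le (j+1)\eps_n\}$ and the denominator KL-neighborhood $\Pi(\calE_n)$ carry the same $\eps_n^{nd}$ volume factor (see the paper's Lemma~\ref{lemma:evidence_LB} and the second assertion of Lemma~\ref{lemma:entropy_prior_mass}), and these cancel when one forms $N_n(\cdot)/D_n$. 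What remains after cancellation is $e^{O(n)}$, which the test exponent $e^{-Km^2 j^2 n}$ can absorb. Your plan instead needs $D_n\geq e^{-cn}$ outright, which is false here.

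\emph{You are missing the coarse-to-refined restriction, and without it the testing step fails.} At resolution $\eps_n=1/\sqrt n$, the covering number of the full parameter space $\calX^n$ under $\rho(\bX,\bX_0)=(1/n)\|\bX\bX^{\mathrm T}-\bX_0\bX_0^{\mathrm T}\|_{\mathrm F}$ is of order $\exp\{O(nd\log n)\}$, as you yourself note. The testing exponent at rate $\eps_n$ is only $\exp\{-Kn^2\eps_n^2\}=\exp\{-Kn\}$, so the aggregated type-I error $N(\eps_n)\exp\{-Kn\}$ diverges. Your dyadic peeling in the annuli $\{2^k\sqrt n\le\|\bX\bX^{\mathrm T}-\bX_0\bX_0^{\mathrm T}\|_{\mathrm F}\le 2^{k+1}\sqrt n\}$ has exactly this problem for small $k$: the annuli near $\bX_0$ still have covering number $\exp\{O(nd\log n)\}$ against test exponents $\exp\{-c 2^{2k}n\}$. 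The paper handles this by a two-step argument: Proposition~\ref{prop:near_rootn_contraction_BRDPG} first establishes the weaker rate $\sqrt{(d\log n)/n}$ (at which the $nd\log n$ entropy \emph{is} affordable), and the main theorem then restricts the posterior to $\Theta_n=\{\rho(\bX,\bX_0)\lesssim\sqrt{(d\log n)/n}\}$, on which Weyl's inequality forces $\sigma_d(\bX)\gtrsim\sqrt n$ uniformly and a Davis--Kahan argument (Lemma~\ref{lemma:entropy_prior_mass}) shows that $\rho$ is locally equivalent to $\inf_\bW\|\bX-\bX_0\bW\|_{\mathrm F}/\sqrt n$; this drops the local entropy at $\eps_n=1/\sqrt n$ to $\exp\{O(nd)\}$, which the $e^{-Kn}$ test exponent does absorb. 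Without this restriction there is no way to run the refined test at rate $1/\sqrt n$.

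\emph{One smaller issue:} your KL-equivalence $K\asymp V\asymp\|\bX\bX^{\mathrm T}-\bX_0\bX_0^{\mathrm T}\|_{\mathrm F}^2$ is valid only on a neighborhood where all entries of $\bX\bX^{\mathrm T}$ stay bounded away from $\{0,1\}$; $\|\cdot\|_{\mathrm F}$ alone does not guarantee this, so the denominator bound must be run on a $\|\cdot\|_{2\to\infty}$ (row-wise sup) neighborhood, as in the paper's Lemma~\ref{lemma:evidence_LB}, not on the Frobenius shell you propose.
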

\begin{remark}
The assumption $(1/n)(\bX_0\transpose\bX_0)\to\bDelta$ as $n\to\infty$ in Theorem \ref{thm:root_n_contraction_BRDPG} can be equivalently written as $(1/n)\sum_{i = 1}^n\bx_{0i}\bx_{0i}\transpose\to\bDelta$ as $n\to\infty$ for some positive definite $\bDelta$. An intuitive interpretation of this condition is that the true latent positions $\bx_{01},\ldots,\bx_{0n}$ can be regarded as ``random'' samples drawn from some non-degenerate distribution with a positive definite second-moment matrix $\bDelta$. By the law of large numbers, the ``sample'' version of the second-moment matrix ``converges'' to the ``population'' version of the second-moment matrix. An illustrative example is the stochastic block model: Suppose the distinct latent positions of $\bx_{01},\ldots,\bx_{0n}$ are $\bx_{01}^*,\ldots,\bx_{0K}^*$, and let $n_k = \sum_{i = 1}^n\mathbbm{1}(\bx_{0i} = \bx_{0k}^*)$ be the number of vertices corresponding to the latent position  $\bx_{0k}^*$. Assume that $K$ is fixed, $n_k/n\to \alpha_k > 0$ as $n\to\infty$, and $\alpha_k$'s, $\bx_{0k}^*$'s are fixed (do not vary with $n$), $k = 1,\ldots,K$. Then 
\[
\frac{1}{n}\bX_0\transpose\bX_0
= \sum_{k = 1}^K\sum_{i = 1}^n\mathbbm{1}(\bx_{0i} = \bx_{0k}^*)\bx_{0i}\bx_{0i}\transpose
= \sum_{k = 1}^K\frac{n_k}{n}(\bx_{0k}^*)(\bx_{0k}^*)\transpose{}
\to \sum_{k = 1}^K\alpha_k(\bx_{0k}^*)(\bx_{0k}^*)\transpose{}
\]
as $n\to\infty$. Therefore, with the above assumption, the stochastic block model satisfies this condition provided that $\sum_{k = 1}^K\alpha_k(\bx_{0k}^*)(\bx_{0k}^*)\transpose{}$ is positive definite. 
\end{remark}
Theorem \ref{thm:root_n_contraction_BRDPG}  claims that under appropriate regularity conditions, the posterior spectral embedding yields a rate-optimal posterior contraction for the latent positions in the Bayesian sense. 
The following theorem shows that one can use the posterior spectral embedding to construct a (frequentist) estimator $\widehat\bX$ that exactly achieves the minimax lower bound \eqref{eqn:minmax_LB}.
\begin{theorem}\label{thm:point_estimator}
Let the conditions in Theorem \ref{thm:root_n_contraction_BRDPG} hold, and let constant $M_1 >  0$ be given by Theorem \ref{thm:root_n_contraction_BRDPG}. Consider the posterior mean of the edge probability matrix
\[
\widetilde\bP = \int_{\bX\in\calX^n}\bX\bX\transpose\Pi(\mathrm{d}\bX\mid\bY).
\]
Suppose $\widetilde\bP$ yields spectral decomposition $\widetilde\bP = \sum_{j = 1}^n\widehat\lambda_j\widehat\bu_j$, where $\widehat\lambda_1,\ldots,\widehat\lambda_n$ are eigenvalues of $\widetilde\bP$ arranged in non-increasing order, and $\widehat\bu_1,\ldots,\widehat\bu_n$ are the associated eigenvectors. Let $\widehat\bU = [\widehat\bu_1,\ldots,\widehat\bu_d]$, $\widehat\bS = \mathrm{diag}(\widehat\lambda_1,\ldots,\widehat\lambda_d)$, $\widehat\bX = \widehat\bU\widehat\bS^{1/2}$, and $\bU_0$ be the left-singular vector matrix of $\bX_0$. Then for sufficiently large $n$,
\begin{align}\label{eqn:PSE_convergence}
\expect_0\left\{\frac{1}{n}\inf_{\bW\in\mathbb{O}(d)}\|\widehat\bX - \bX_0\bW\|_{\mathrm{F}}^2\right\}&\lesssim \frac{1}{n}.
\end{align}
Furthermore, for sufficiently large $n$, 
\begin{align}\label{eqn:Eigenvector_convergence}
\prob_0\left(\inf_{\bW\in\mathbb{O}(d)}\|\widehat\bU - \bU_0\bW\|^2_{\mathrm{F}} > 
\frac{128M_1^2d}{\lambda_d^2(\bDelta){n}}
\right)&\leq 2\exp\left(-\frac{1}{4}M_1d\sqrt{n}\right).
\end{align}
\end{theorem}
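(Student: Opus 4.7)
The plan is to reduce the proof to the posterior Gram-matrix contraction already established in Theorem~\ref{thm:root_n_contraction_BRDPG} and then apply standard spectral-perturbation tools. The key link is the posterior mean $\widetilde\bP$: once $\|\widetilde\bP - \bX_0\bX_0\transpose\|_{\mathrm{F}}$ is controlled, both \eqref{eqn:PSE_convergence} and \eqref{eqn:Eigenvector_convergence} follow from (a) the best rank-$d$ approximation property of $\widehat\bX\widehat\bX\transpose$, (b) a Procrustes-type inequality relating Gram-matrix error to latent-position error, and (c) the Davis--Kahan $\sin\Theta$ theorem.

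First, I would transfer the posterior contraction on $\bX\bX\transpose$ to a bound on $\widetilde\bP$. By Jensen's (and Cauchy--Schwarz) inequality,
\[
\|\widetilde\bP - \bX_0\bX_0\transpose\|_{\mathrm{F}}^2 \leq \int_{\calX^n} \|\bX\bX\transpose - \bX_0\bX_0\transpose\|_{\mathrm{F}}^2\,\Pi(\mathrm{d}\bX\mid\bY).
\]
Splitting the integral at the set $E_n = \{\bX:(1/n)\|\bX\bX\transpose - \bX_0\bX_0\transpose\|_{\mathrm{F}} \leq M_1/\sqrt{n}\}$, on whose complement the integrand is trivially bounded by $(2n)^2$ (entries of $\bX\bX\transpose - \bX_0\bX_0\transpose$ lie in $[-1,1]$), Theorem~\ref{thm:root_n_contraction_BRDPG} gives $\expect_0\|\widetilde\bP - \bX_0\bX_0\transpose\|_{\mathrm{F}}^2 \lesssim n$. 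Markov's inequality applied to $\Pi(E_n^c\mid\bY)$ also yields the high-probability bound $\|\widetilde\bP - \bX_0\bX_0\transpose\|_{\mathrm{F}} \leq 2M_1\sqrt{n}$ with $\prob_0$-probability at least $1 - 2\exp(-M_1 d\sqrt{n}/4)$ for large $n$, since $nd/2 \gg M_1 d\sqrt{n}/4$.

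Second, I exploit that $\widehat\bX\widehat\bX\transpose = \widehat\bU\widehat\bS\widehat\bU\transpose$ is the best rank-$d$ Frobenius approximation of $\widetilde\bP$ while $\bX_0\bX_0\transpose$ has rank at most $d$, so $\|\widehat\bX\widehat\bX\transpose - \widetilde\bP\|_{\mathrm{F}} \leq \|\bX_0\bX_0\transpose - \widetilde\bP\|_{\mathrm{F}}$, and the triangle inequality yields $\|\widehat\bX\widehat\bX\transpose - \bX_0\bX_0\transpose\|_{\mathrm{F}} \leq 2\|\widetilde\bP - \bX_0\bX_0\transpose\|_{\mathrm{F}}$. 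A Procrustes-type inequality relating Gram-matrix error to latent-position error then gives
\[
\inf_{\bW\in\mathbb{O}(d)}\|\widehat\bX - \bX_0\bW\|_{\mathrm{F}}^2 \lesssim \frac{\|\widehat\bX\widehat\bX\transpose - \bX_0\bX_0\transpose\|_{\mathrm{F}}^2}{\sigma_d^2(\bX_0)}.
\]
Under $(1/n)\bX_0\transpose\bX_0 \to \bDelta$ we have $\sigma_d^2(\bX_0) \geq (n/2)\lambda_d(\bDelta)$ for large $n$, so combining the pieces yields $\expect_0\inf_\bW\|\widehat\bX - \bX_0\bW\|_{\mathrm{F}}^2 \lesssim 1$, which is \eqref{eqn:PSE_convergence}. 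For \eqref{eqn:Eigenvector_convergence} I apply the Davis--Kahan $\sin\Theta$ theorem (Yu--Wang--Samworth version) to $\widetilde\bP$ and $\bX_0\bX_0\transpose$: since the $d$th eigenvalue of $\bX_0\bX_0\transpose$ is at least $(n/2)\lambda_d(\bDelta)$ and its $(d+1)$th eigenvalue vanishes,
\[
\inf_{\bW\in\mathbb{O}(d)}\|\widehat\bU - \bU_0\bW\|_{\mathrm{F}} \leq \frac{2\sqrt{2d}\,\|\widetilde\bP - \bX_0\bX_0\transpose\|_{\mathrm{F}}}{(n/2)\lambda_d(\bDelta)},
\]
and on the high-probability event from step one, squaring gives the target $128 M_1^2 d/(\lambda_d^2(\bDelta) n)$.

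The main obstacle is the Procrustes-type inequality bounding the latent-position Frobenius error by the Gram-matrix error divided by $\sigma_d^2(\bX_0)$: although standard for matrices of the form $\bU\bS^{1/2}$, it requires a careful SVD-based alignment argument combined with a perturbation bound for the positive semidefinite square root. A secondary subtlety is calibrating Markov's threshold against $8\exp(-nd/2)$ so as to recover the precise exponent $M_1 d\sqrt{n}/4$ in \eqref{eqn:Eigenvector_convergence}, together with verifying the approximation $\sigma_d^2(\bX_0) \geq (n/2)\lambda_d(\bDelta)$ holds for $n$ large enough uniformly in the relevant spectral quantities.
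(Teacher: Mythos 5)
Your plan is correct, and although the overall skeleton (transfer posterior contraction to $\widetilde\bP$, then apply spectral perturbation to get $\widehat\bX$ and $\widehat\bU$) matches the paper, your treatment of the first assertion is a genuinely different and cleaner route. For \eqref{eqn:PSE_convergence}, the paper does not go through the best rank-$d$ approximation plus a Procrustes inequality; instead it applies the bespoke decomposition of Lemma~\ref{lemma:Decomposition} directly to $\widehat\bX$ and $\bX_0$ (with $\bP = \widetilde\bP$), which forces working on an event $\Gamma_n$ where the singular values of $\widehat\bX$ do not deviate too much, and then a separate Markov bound on $\prob_0(\Gamma_n^c)\lesssim 1/n$ together with the deterministic bound $\|\widehat\bX\|_{\mathrm{F}}^2\leq n$ to control the contribution from $\Gamma_n^c$. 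Your approach bypasses all of this: the Eckart--Young step gives $\|\widehat\bX\widehat\bX\transpose - \bP_0\|_{\mathrm{F}}\leq 2\|\widetilde\bP - \bP_0\|_{\mathrm{F}}$, and the Procrustes inequality (the Tu--Boczar--Simchowitz--Soltanolkotabi--Recht lemma, $\inf_\bW\|\bA-\bB\bW\|_{\mathrm{F}}^2\leq\{2(\sqrt{2}-1)\sigma_d^2(\bB)\}^{-1}\|\bA\bA\transpose-\bB\bB\transpose\|_{\mathrm{F}}^2$) holds \emph{unconditionally} for $n\times d$ matrices, so you can pass to expectations in one line with no auxiliary event. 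The Procrustes lemma you flag as ``the main obstacle'' is indeed the piece that needs to be cited or proved, but it is a standard result and the bound is tight. For \eqref{eqn:Eigenvector_convergence}, the paper bounds the moment generating function of $\|\widetilde\bP-\bP_0\|_{\mathrm{F}}$ (splitting the integral inside the exponential and invoking Theorem~\ref{thm:root_n_contraction_BRDPG}) and then applies Chernoff with $t = d/4$; you instead apply Markov to $\Pi(E_n^c\mid\bY)$ and use the deterministic bound $\|\bX\bX\transpose-\bP_0\|_{\mathrm{F}}\leq 2n$ on $\calX^n$ — a simpler argument that actually yields a sharper tail than the stated $2\exp(-M_1 d\sqrt{n}/4)$, so the theorem's claim still follows. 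The Davis--Kahan step and the arithmetic giving the $128 M_1^2 d/(\lambda_d^2(\bDelta)n)$ constant agree with the paper's. In short: same architecture, but your spectral-perturbation argument for the point estimator trades the paper's decomposition-plus-conditioning for an unconditional rank-$d$ Procrustes bound, which is a real simplification.
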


We briefly compare the results of Theorem \ref{thm:point_estimator} with those in \cite{6565321}. The convergence rate \eqref{eqn:PSE_convergence} shows that $\widehat\bX$ not only achieves the minimax lower bound \eqref{eqn:minmax_LB}, but also yields a convergence rate $(1/n)\inf_\bW\|\widehat\bX - \bX_0\bW\|_{\mathrm{F}}^2 = o_{\prob_0}(M_n/n)$ for any $M_n\to\infty$, improving the rate \eqref{eqn:ASE_rate_Sussman} obtained in \cite{6565321}. 
The convergence rate of the unscaled eigenvectors $\widehat\bU$ given by \eqref{eqn:Eigenvector_convergence} also improves its counterpart in \cite{6565321}, which is explained as follows. Denote $\bU$ the left-singular vector matrix of $\bX$, and $\widehat\bU_{\mathrm{ASE}}$ that of $\widehat\bX_{\mathrm{ASE}}$. Then in \cite{6565321} the authors show that under the assumptions of Theorem \ref{thm:Sussman_convergence_ASE}, there exists a diagonal matrix $\bW$, the diagonal entries of which are either $1$ or $-1$, such that 
\begin{align}\label{eqn:Eigenvector_rate_Sussman}
\prob_0\left(\|(\widehat\bU_{\mathrm{ASE}})_{*k} - (\bW\bU_0)_{*k}\|_2^2> \frac{3\log n}{\delta^2 n}\right)\leq \frac{2(d^2 + 1)}{n^2}
\end{align}
for $k = 1,\ldots,d$. In contrast, the eigenvector estimate $\widehat\bU$ derived using the posterior spectral embedding improves the convergence rate \eqref{eqn:Eigenvector_rate_Sussman}: Not only do we improve the rate from $(\log n)/ n$ to $1/n$, but we also sharpen the large deviation probability from $O(1/n^2)$ to $O(\mathrm{e}^{-c\sqrt{n}})$ for some constant $c > 0$. The distinct eigenvalues condition for $\bDelta$ required in \cite{6565321} is also relaxed. 

\section{Application: Clustering in Stochastic Block Models} 
\label{sec:sample_application_community_detection_in_stochastic_blockmodels}
This section presents an application of the posterior spectral embedding to clustering in stochastic block models. In particular, we show that the result obtained in this section strengthens an existing result related to the number of mis-clustered vertices. In preparation for doing so, let us first review the $K$-means clustering procedure in general (see, for example, \citealp{lloyd1982least}). Suppose that $n$ data points $\widehat\bx_1,\ldots,\widehat\bx_n$ in $\mathbb{R}^d$ are to be assigned into $K$ clusters, and denote $\widehat\bX = [\widehat\bx_1,\ldots,\widehat\bx_n]\transpose\in\mathbb{R}^{n\times d}$ the corresponding data matrix. The $K$-means clustering centroids of $\widehat\bx_1,\ldots,\widehat\bx_n$, represented by an $n\times d$ matrix $C(\widehat\bX)$ with $K$ distinct rows, are given by
\begin{align*}
\bC(\widehat\bX) = \argmin_{\bC\in\calC_K}\| \bC - \widehat\bX\|_{\mathrm{F}},\quad\text{where }
\calC_K = \{\bC\in\mathbb{R}^{n\times d}:\bC\text{ has }K\text{ distinct rows}\}.
\end{align*}
The corresponding cluster assignment function is defined to be any function $\tau(\cdot;\widehat\bX):[n]\to [K]$ such that $\tau(i;\widehat\bX) = \tau(j;\widehat\bX)$ if and only if $\bC(\widehat\bX)_{i*} = \bC(\widehat\bX)_{j*}$. 
Given two cluster assignment functions $\tau_1$ and $\tau_2$, the Hamming distance between $\tau_1$ and $\tau_2$ is defined by $d_H(\tau_1,\tau_2) = \sum_{i = 1}^n\mathbbm{1}\{\tau_1(i) \neq \tau_2(i)\}$. To avoid the labeling issue, we use $\inf_{\sigma\in\calS_K}d_H(\sigma\circ\tau(\cdot;\bX),\tau(\cdot;\bX_0))$ as the measurement for clustering performance, where $\calS_K$ is the set of all permutations in $[K]$.

A clustering procedure for stochastic block models is called consistent if the resulting fraction of mis-clustered vertices is asymptotically zero. Consistent clustering procedure in stochastic block models have been investigated in earlier work, including likelihood-based methods \citep{10.1093/biomet/asr053}, spectral clustering based on the Laplacian spectral embedding \citep{rohe2011}, $K$-means clustering based on the adjacency spectral embedding \citep{sussman2012consistent}, and modularity maximization \citep{Girvan7821}, among others.  In particular, the authors of \cite{sussman2012consistent} argue that by directly applying the $K$-means procedure to the adjacency spectral embedding $\widehat\bX_{\mathrm{ASE}}$ (\emph{i.e.}, replacing the aforementioned $\widehat\bX$ by $\widehat\bX_{\mathrm{ASE}}$), the number of mis-clustered vertices can be upper bounded by $O(\log n)$. In what follows we show that this result can be strengthened by taking advantage of the $\sqrt{n}$-convergence rate of the posterior spectral embedding. 

Our method for clustering is straightforward: Similar to the $K$-means clustering based on $\widehat\bX_{\mathrm{ASE}}$, we directly apply the $K$-means clustering procedure to the posterior samples collected from the posterior spectral embedding. Specifically, for each realization $\bX$ drawn from the posterior spectral embedding, we obtain a cluster assignment function $\tau(\cdot;\bX)$ by applying the aforementioned $K$-means clustering procedure to $\bX$. This results in a posterior distribution of the cluster assignment function $\Pi(\tau(\cdot;\bX)\in\cdot\mid\bY)$, which is induced from the map $\bX\mapsto \tau(\cdot;\bX)$ and the posterior spectral embedding $\Pi(\bX\in\cdot\mid\bY)$. The below theorem shows that we can recover the clustering structure through the $K$-means procedure even when we assume that 
 the working model is the random dot product graph model, which is more general than the stochastic block model. 

\begin{theorem}\label{thm:PSE_clustering}
Assume the conditions in Theorem \ref{thm:root_n_contraction_BRDPG} hold, and let the constants $M_1, M_2 > 0$ be provided by Theorem \ref{thm:root_n_contraction_BRDPG}. Further assume that $\bX_0 =[\bx_{01},\ldots,\bx_{0n}]\transpose$ has $K$ distinct rows $\bx_{01}^*,\ldots,\bx_{0K}^*$ for some $K\in[n]$, they satisfy $\min_{k\neq k'}\|\bx_{0k}^*-\bx_{0k'}^*\|_2 > \xi$ for some $\xi > 0$, and $n_k := \sum_{i = 1}^n\mathbbm{1}\{\bx_{0i} = \bx_{0k}^*\}  \to\infty$ as $n\to\infty$ for all $k\in [K]$.  
Then for sufficiently large $n$,
\begin{align*}
\expect_0\left\{\Pi\left(\inf_{\sigma\in\calS_K}d_H(\sigma\circ\tau(\cdot;\bX_0), \tau(\cdot;\bX)) \geq \frac{16M_2^2}{\xi^2}\mathrel{\Big|}\bY\right)\right\}\leq 8\exp\left(-\frac{1}{2}nd\right).
\end{align*}
Let $\widehat\bU$ be the left-singular vector matrix of $\widehat\bX$ defined in Theorem \ref{thm:point_estimator}, and $\bU_0$ be that of $\bX_0$. Then it almost always holds that 
\[
\inf_{\sigma\in\calS_K}d_H(\sigma\circ\tau(\cdot;\widehat\bU), \tau(\cdot;\bU_0)) \leq \frac{16}{\xi^2}\left\{\frac{8M_1\sqrt{2d}}{\lambda_d(\bDelta)}\right\}^2.
\]
\end{theorem}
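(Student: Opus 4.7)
The plan is to translate the two convergence estimates---Theorem \ref{thm:root_n_contraction_BRDPG} for the Bayesian side and Theorem \ref{thm:point_estimator} for the frequentist side---into mis-clustering bounds via a standard $K$-means perturbation argument anchored by the $\xi$-separation of the distinct rows of $\bX_0$.

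For the posterior statement I would let $\bW^* = \arginf_{\bW\in\mathbb{O}(d)}\|\bX - \bX_0\bW\|_{\mathrm{F}}$ and set $\bC^* = \bX_0\bW^*$. Because $\bX_0$ has exactly $K$ distinct rows and $\bW^*$ is orthogonal, $\bC^*$ itself lies in the feasible set $\calC_K$ of the $K$-means objective associated with $\bX$, so the optimality of $\bC(\bX)$ gives $\|\bC(\bX)-\bX\|_{\mathrm{F}} \leq \|\bC^*-\bX\|_{\mathrm{F}}$. Triangle inequality then controls $\|\bC(\bX)-\bC^*\|_{\mathrm{F}}^2$ by a constant multiple of $\inf_{\bW}\|\bX-\bX_0\bW\|_{\mathrm{F}}^2$, which Theorem \ref{thm:root_n_contraction_BRDPG} bounds by $M_2$ with posterior probability at least $1 - 8\exp(-nd/2)$ in expectation. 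I would then take $\sigma^*\in\calS_K$ to be the permutation matching each centroid of $\bC(\bX)$ to the closest distinct row of $\bC^*$; since the $K$ distinct rows of $\bC^*$ inherit the pairwise separation $\xi$ of $\bX_0$ (orthogonal transformations are isometries), every vertex $i$ mis-clustered under $\sigma^*$ satisfies $\|\bC(\bX)_{i*} - \bC^*_{i*}\|_2 \geq \xi/2$ by the triangle inequality, so it contributes at least $\xi^2/4$ to the Frobenius bound. Dividing yields the desired $O(M_2/\xi^2)$ control on the mis-clustered count, which, after sharpening constants, matches the stated $16M_2^2/\xi^2$.

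The almost-sure statement for $\widehat\bU$ follows by applying the same perturbation argument with $\widehat\bU$ in place of $\bX$ and $\bU_0$ in place of $\bX_0$. Writing $\bX_0 = \bU_0\bS_0\bV_0\transpose$, the $K$ distinct rows of $\bU_0$ are separated by at least $\xi/\sigma_1(\bX_0)$, and $\sigma_1^2(\bX_0)\asymp n\lambda_1(\bDelta)$ under the second-moment assumption. Theorem \ref{thm:point_estimator} supplies $\inf_{\bW}\|\widehat\bU - \bU_0\bW\|_{\mathrm{F}}^2 \lesssim M_1^2 d/(\lambda_d^2(\bDelta)n)$ with failure probability bounded by $2\exp(-M_1 d\sqrt{n}/4)$, a sequence summable in $n$, so Borel--Cantelli upgrades it to an almost-always bound. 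Plugging the row separation of $\bU_0$ and the error of $\widehat\bU$ into the $K$-means stability bound, the $1/n$ in the error cancels with the $n$ in $\sigma_1^2(\bX_0)$, producing the stated constant $(16/\xi^2)\{8M_1\sqrt{2d}/\lambda_d(\bDelta)\}^2$.

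The principal technical hurdle is the existence of the matching $\sigma^*\in\calS_K$ assigning every perturbed centroid to within half the separation of a true centroid, which drives the ``$\xi/2$-per-mis-clustered-vertex'' accounting. A pigeonhole/exchange argument handles it: if no such matching existed, two perturbed centroids would lie within $\xi$ of a common true centroid, leaving one true centroid unmatched and forcing at least $n_k$ mis-classifications for the corresponding class, which contradicts the Frobenius bound once $n$ is large enough that $n_k$ exceeds $O(M_2/\xi^2)$. This is precisely where the hypothesis $n_k\to\infty$ enters. A secondary subtlety is that the permutation witnessing the Hamming distance in the theorem and the permutation matching the centroids must agree; but this is automatic because every vertex correctly matched under the centroid permutation is, by construction, correctly clustered under the same Hamming permutation.
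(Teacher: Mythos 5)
Your proposal is correct and follows essentially the same route as the paper's proof: bound $\|\bC(\cdot)-\bX_0\bW\|_{\mathrm{F}}$ by $2\|\cdot-\bX_0\bW\|_{\mathrm{F}}$ via $K$-means optimality and triangle inequality, count mis-clustered vertices as those whose assigned centroid is at least $\xi/2$ from the true (rotated) latent position, use the fact that $n_k\to\infty$ eventually exceeds the mis-clustering count to force a bijective matching of estimated to true centroids, and transfer to $\bU_0$ by noting that singular-value comparison scales the $\xi$-separation by $1/\sqrt{n}$; the paper's bad-set/disjoint-balls phrasing is the same argument as your permutation-matching plus pigeonhole phrasing. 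One small quibble: in the $\widehat\bU$ branch you write the row separation of $\bU_0$ as $\xi/\sigma_1(\bX_0)$ with $\sigma_1^2(\bX_0)\asymp n\lambda_1(\bDelta)$ and claim the $n$'s cancel to give the \emph{stated} constant, but this would leave a stray $\lambda_1(\bDelta)$ factor; the paper instead uses the cruder $\|\bX_0\|_{\mathrm{F}}^2\leq n$ (since all rows have $\ell_2$-norm at most $1$), giving separation $\xi/\sqrt{n}$ and hence exactly $\frac{16}{\xi^2}\{8M_1\sqrt{2d}/\lambda_d(\bDelta)\}^2$. Your version produces a constant that is in fact slightly tighter (since $\lambda_1(\bDelta)\leq1$), so the conclusion still holds, but the ``matches the stated constant'' phrasing should be corrected.
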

\begin{remark}
In \cite{sussman2012consistent}, the authors directly apply the $K$-means clustering procedure to the $\widehat\bX_{\mathrm{ASE}}$, and show that 
$\inf_{\sigma\in\calS_K}d_H(\sigma\circ\tau(\cdot;\widehat\bX_{\mathrm{ASE}}), \tau(\cdot;\bX_0)) \lesssim \log n$ almost always. 
Namely, the number of vertices that are incorrectly clustered is $O(\log n)$ eventually. The results obtained in Theorem \ref{thm:PSE_clustering} is stronger, since it shows that this number can be further reduced to $O(1)$ in the following two senses: If the $K$-means clustering procedure is applied to the posterior samples drawn from the posterior spectral embedding, then with posterior probability tending to one in $\prob_0$-probability, the posterior number of mis-clustered vertices is upper bounded by a constant; If the $K$-means clustering procedure is directly applied to the unscaled left-singular vector $\widehat\bU$ of the point estimator $\widehat\bX$ obtained in Theorem \ref{thm:point_estimator}, then it almost always holds that this number can be upper bounded by a constant as well. 
\end{remark}

\section{A Spectral-based Gaussian Spectral Embedding} 
\label{sec:spectral_based_bayesian_estimation}


We have seen in Sections \ref{sec:optimal_bayesian_estimation} and \ref{sec:sample_application_community_detection_in_stochastic_blockmodels} the advantages of the  posterior spectral embedding over the adjacency spectral embedding for the random dot product graph model. The major difference is that the posterior spectral embedding is a fully likelihood-based approach taking the Bernoulli likelihood information into account, while the adjacency spectral embedding only leverages
the low-rank structure of the expected value of the adjacency matrix $\bX\bX\transpose = \expect_\bX(\bY)$.
Recall that the adjacency spectral embedding $\widehat\bX_{\mathrm{ASE}}$ is the solution to the minimization problem
$\min_{\bX\in\mathbb{R}^{n\times d}}\|\bY - \bX\bX\transpose\|_{\mathrm{F}}^2$. 
Equivalently, we can also view $\widehat\bX_{\mathrm{ASE}}$ as the maximum likelihood estimator of $\bX$ using a Gaussian likelihood function:
\[
\widehat\bX_{\mathrm{ASE}} = \argmin_{\bX\in\mathbb{R}^{n\times d}}\|\bY - \bX\bX\transpose\|_{\mathrm{F}}^2 = \argmax_{\bX\in\mathbb{R}^{n\times d}}\sum_{i = 1}^n\sum_{j = 1}^n\left\{
-\frac{1}{2}\log(2\pi) - \frac{1}{2}(y_{ij} - \bx_i\transpose\bx_j)^2\right\}.
\] 
The above maximum likelihood interpretation of the adjacency spectral embedding through the Gaussian likelihood function motivates us to study a Bayesian analogy of the adjacency spectral embedding, referred to as the \emph{Gaussian spectral embedding}, introduced as follows. 
 
 Assume that $\Pi_G$ is some prior distribution on the latent position matrix $\bX$. We consider  the following pseudo-posterior distribution by taking the Gaussian density as the working model: 
\begin{align}
\begin{split}
\label{eqn:GSE}
\Pi_G(\bX\in\calA\mid\bY) &= \frac{N_n^G(\calA)}{D_n^G},\quad\text{where }
N_n^G(\calA) = \int_{\calA}\prod_{i,j\in[n]}\frac{\phi(y_{ij} - \bx_i\transpose\bx_j)}{\phi(y_{ij} - \bx_{0i}\transpose\bx_{0j})}\Pi_G(\mathrm{d}\bX),\\
D_n^G &= N_n(\mathbb{R}^{n\times d}),
\end{split}
\end{align} 
for any measurable set $\calA\subset\mathbb{R}^{n\times d}$, where $\phi$ is the density function of $\mathrm{N}(0, 1)$. The formulation of \eqref{eqn:GSE} is completely based on the spectral property of $\bY$ and $\expect_\bX(\bY) = \bX\bX\transpose$, and does not incorporate  the Bernoulli likelihood information. We refer to the (pseudo) posterior distribution \eqref{eqn:GSE} as the Gaussian spectral embedding of $\bY$. Observe that when 
\begin{align}\label{eqn:GSE_Gaussian_prior}
\Pi_G(\mathrm{d}\bX) = \prod_{i = 1}^n\left(\frac{1}{\sqrt{2\pi\sigma^2}}\right)^d\exp\left(-\frac{\bx_i\transpose\bx_i}{2\sigma^2}\right)\mathrm{d}\bx_i 
\end{align}
for some $\sigma^2 > 0$, the maximum \emph{a posteriori} estimator of \eqref{eqn:GSE} is the same as the solution to the minimization problem $\min_{\bX\in\mathbb{R}^{n\times d}}\|\bY - \bX\bX\transpose\|_{\mathrm{F}}^2 + (1/2\sigma^2)\|\bX\|_{\mathrm{F}}^2$. In particular, when $\sigma^2\to\infty$ (corresponding to a non-informative flat prior), the maximum \emph{a posteriori} estimator of \eqref{eqn:GSE} coincides with the adjacency spectral embedding $\widehat\bX_{\mathrm{ASE}}$. Therefore, one can heuristically view the Gaussian spectral embedding defined through \eqref{eqn:GSE} as a direct Bayesian analogy of the adjacency spectral embedding. 

\begin{remark}[Generality of the Gaussian spectral embedding]
Recall that the random dot product graph model can be alternatively regarded as a low-rank matrix model: $\bY = \bX\bX\transpose + \bE$ for some low-rank matrix $\bX\bX\transpose$ and some noise matrix $\bE$. Note that in the formulation of the Gaussian spectral embedding, we do not constrain the latent positions $\bx_1,\ldots,\bx_n$ to lie in the space $\calX = \{\bx\in\mathbb{R}^d:\|\bx\|_2\leq 1, \bx\geq0\}$, and do not assume a parametric form for the distribution of the entries of $\bY$. Namely, the Gaussian spectral embedding \eqref{eqn:GSE} is well-defined not only for the random dot product graph model, but also for a more general class of low-rank matrix models. In the theoretical analysis below, we also assume that the sampling model for $\bY$ is a more general low-rank matrix model $\bY = \bX\bX\transpose + \bE$ for some $\bX\in\mathbb{R}^{n\times d}$, and the entries of $\bE$ are only required to be sub-Gaussian. 
\end{remark}

\begin{theorem}
\label{thm:pseudo_posterior_contraction}
Let $\bY\in\mathbb{R}^{n\times n}$ be a symmetric random matrix with $(y_{ij}:1\leq i \leq j\leq n)$ being independent, and let $\expect_0(\bY) = \bX_0\bX_0\transpose$ for some $\bX_0\in\mathbb{R}^{n\times d}$, where $d/n\to 0$. 
Assume that $(1/n)\bX_0\transpose\bX_0\to \bDelta$ as $n\to\infty$ for some positive definite $\bDelta\in\mathbb{R}^{d\times d}$, and the entries of $\bY - \expect_0(\bY)$ are sub-Gaussian, \emph{i.e.}, there exists some constant $\tau > 0$, such that for all $\bA\in\mathbb{R}^{n\times n}$ with $\|\bA\|_{\mathrm{F}}^2 = 1$, and all $t > 0$,
$\prob_0\left(|\mathrm{Tr}\left(\bA\transpose(\bY - \bX_0\bX_0\transpose)\right)| > t\right)\leq \mathrm{e}^{-\tau t^2}$.
Then there exist some $M > 0$ and a constant $C_\tau$ only depending on $\tau$ and $\bDelta$, such that for sufficiently large $n$, 
\begin{align*}
\expect\left\{\Pi_G\left(\frac{1}{n}\inf_{\bW\in\mathbb{O}(d)}\|\bX - \bX_0\bW\|_{\mathrm{F}}^2 > \frac{Md\log n}{n}\mathrel{\Big|}\bY\right)\right\}
\leq 14\exp(-C_\tau M^2n\log n).
\end{align*}
\end{theorem}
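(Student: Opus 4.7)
The plan is to adapt the standard Ghosal--Ghosh--van der Vaart posterior-contraction framework to the pseudo-likelihood in \eqref{eqn:GSE}. The key observation is that the log pseudo-likelihood ratio admits the clean quadratic--linear decomposition
$$\log\prod_{i,j\in[n]}\frac{\phi(y_{ij}-\bx_i\transpose\bx_j)}{\phi(y_{ij}-\bx_{0i}\transpose\bx_{0j})} \;=\; \mathrm{Tr}\bigl((\bP-\bP_0)(\bY-\bP_0)\bigr) - \tfrac{1}{2}\|\bP-\bP_0\|_{\mathrm{F}}^2,$$
where $\bP=\bX\bX\transpose$ and $\bP_0=\bX_0\bX_0\transpose$, so the analysis reduces to controlling the linear-in-$\bY$ trace term against the strongly concave penalty $-\tfrac12\|\bP-\bP_0\|_{\mathrm{F}}^2$. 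A standard Procrustes/Davis--Kahan inequality together with $\sigma_d(\bX_0)^2\asymp n$ (from $(1/n)\bX_0\transpose\bX_0\to\bDelta\succ 0$) shows that the bad set $\calA_n=\{(1/n)\inf_\bW\|\bX-\bX_0\bW\|_{\mathrm{F}}^2 > Md\log n/n\}$ is contained, up to constants, in $\{\|\bP-\bP_0\|_{\mathrm{F}}^2 > cMnd\log n\}$, the natural Frobenius-scale target for the Gaussian pseudo-likelihood.

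To upper bound the numerator $N_n^G(\calA_n)$ I would establish a uniform concentration inequality of the form
$$\sup_{\bX\in\Theta_n}\bigl|\mathrm{Tr}((\bP-\bP_0)(\bY-\bP_0))\bigr| \;\leq\; \tfrac14\|\bP-\bP_0\|_{\mathrm{F}}^2 + C\,nd\log n$$
on a high-probability event $E_n$, where $\Theta_n=\{\bX\in\mathbb{R}^{n\times d}:\|\bX\|_{\mathrm{F}}\leq R\sqrt{n}\}$ is a sieve. The sub-Gaussian hypothesis controls $\mathrm{Tr}(\bA\transpose(\bY-\bP_0))$ for a single $\bA$, and a chaining/union-bound over an $\epsilon$-net of the variety of rank-$\leq 2d$ symmetric matrices of unit Frobenius norm (which has metric entropy of order $nd\log(1/\epsilon)$) upgrades this to the desired uniform inequality, with $\prob_0(E_n^c)\leq\exp(-C'n\log n)$; an AM--GM split converts the $\|\bP-\bP_0\|_{\mathrm{F}}\sqrt{nd\log n}$ estimate into the quadratic-plus-constant form above. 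Combined with the penalty and $\|\bP-\bP_0\|_{\mathrm{F}}^2>cMnd\log n$ on $\calA_n$, the integrand in $N_n^G(\calA_n)$ is pointwise $\leq\exp(-c'Mnd\log n)$ on $\calA_n\cap\Theta_n$; a separate argument discards $\Theta_n^c$ since the Gaussian prior \eqref{eqn:GSE_Gaussian_prior} has exponentially thin tails.

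The evidence lower bound is obtained by restricting $D_n^G$ to a small Frobenius neighborhood $B_n=\{\bX:\|\bX-\bX_0\bW^*\|_{\mathrm{F}}\leq\epsilon_n\}$ for an appropriate $\bW^*\in\mathbb{O}(d)$ and $\epsilon_n^2\asymp d\log n$. On $B_n$, the expansion $\|\bP-\bP_0\|_{\mathrm{F}}\leq(\|\bX\|_{\mathrm{F}}+\|\bX_0\|_{\mathrm{F}})\|\bX-\bX_0\bW^*\|_{\mathrm{F}}$ gives $\|\bP-\bP_0\|_{\mathrm{F}}^2\lesssim nd\log n$, so the quadratic piece of the log-ratio is at worst $-Cnd\log n$; the linear piece is sub-Gaussian with scale of the same order and is absorbed by a Chebyshev/Fubini argument applied to $\expect_0[D_n^G]$. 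Gaussian small-ball anti-concentration around $\bX_0\bW^*$ yields $\Pi_G(B_n)\geq\exp(-C'' nd\log n)$, so $D_n^G\geq\exp(-C''' nd\log n)$ with probability at least $1-\exp(-C_\tau M^2 n\log n)$. Dividing the numerator and denominator bounds and integrating over $\bY$ delivers the stated inequality, with the constants absorbed into $C_\tau$.

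The main obstacle is the uniform sub-Gaussian control of the trace term over the nonlinear rank-$d$ parameter manifold. The $nd$-dimensional effective parameter space forces the threshold in the covering argument to scale like $nd\log n$ rather than $nd$, and this extra $\log n$ factor is precisely what makes the Gaussian pseudo-posterior sub-optimal relative to the Bernoulli-likelihood posterior of Theorem \ref{thm:root_n_contraction_BRDPG}, where the sharper Bernstein-type bound arising from the boundedness of the Bernoulli noise eliminates the logarithm. Handling $\bX$ with $\|\bX\|_{\mathrm{F}}\gg\sqrt{n}$, needed because the Gaussian prior has unbounded support, requires a routine sieve/tail-trimming step that does not affect the rate.
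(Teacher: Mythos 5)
Your proposal follows essentially the same route as the paper: the quadratic--linear decomposition of the log pseudo-likelihood ratio, a covering/union-bound concentration of $\mathrm{Tr}\bigl((\bP-\bP_0)(\bY-\bP_0)\bigr)$ over rank-$\leq 2d$ unit-Frobenius matrices, the AM--GM split into quadratic plus constant, Gaussian small-ball for the evidence lower bound, and the Procrustes/Davis--Kahan conversion from $\|\bP-\bP_0\|_{\mathrm{F}}$ back to $\inf_\bW\|\bX-\bX_0\bW\|_{\mathrm{F}}$.

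Two procedural points are worth flagging, because they simplify the argument. First, the sieve $\Theta_n=\{\|\bX\|_{\mathrm{F}}\leq R\sqrt{n}\}$ and the accompanying ``tail-trimming'' step are unnecessary: the paper's Lemma \ref{lemma:sub_Gaussian_concentration} takes the supremum over $\bX\in\mathbb{R}^{n\times d}$ of the \emph{normalized} trace $\bigl\langle\bE,\tfrac{\bP-\bP_0}{\|\bP-\bP_0\|_{\mathrm{F}}}\bigr\rangle_{\mathrm{F}}$, which always lives in the compact set of rank-$\leq 2d$ unit-Frobenius matrices no matter how large $\|\bX\|_{\mathrm{F}}$ is, so the concentration holds over all of $\mathbb{R}^{n\times d}$ at once; then $-\tfrac{1}{2}\|\bP-\bP_0\|_{\mathrm{F}}^2 + \alpha n\eps_n\|\bP-\bP_0\|_{\mathrm{F}}\leq -\tfrac{3}{8}\|\bP-\bP_0\|_{\mathrm{F}}^2 + 2\alpha^2 n^2\eps_n^2$ already controls the integrand for $\|\bX\|_{\mathrm{F}}$ arbitrarily large. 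Second, for the evidence lower bound you propose the GGV-style Chebyshev/Fubini absorption of the linear term; the paper instead conditions on the same good event $\calE_n(\alpha)$ used for the numerator, which gives the denominator bound deterministically on that event and avoids a second probabilistic step. Both routes work and give the same rate. Finally, a small correction to your commentary: the $\log n$ factor is not forced by the covering threshold (the covering exponent is of order $nd$), but by the Gaussian prior small-ball exponent $\eps_n^{nd}\asymp\exp(-\tfrac{1}{2}nd\log n)$, which here cannot be offset by the two-stage coarse-then-refine argument that Theorem \ref{thm:root_n_contraction_BRDPG} uses to strip the logarithm for the Bernoulli likelihood.
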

On one hand, when the sampling model is restricted to the random dot product graph model, the posterior contraction rate for the latent positions under the Gaussian spectral embedding is slower than the optimal rate $1/n$ with an extra logarithmic factor, while the posterior spectral embedding yields a rate-optimal contraction.  
On the other hand, the Gaussian spectral embedding can be applied to more general low-rank matrix models, while the posterior spectral embedding is specifically designed for the random dot product graph model. In addition, the posterior spectral embedding requires the latent positions $\bx_1,\ldots,\bx_n$ to lie in the space $\calX$. Such a restriction could potentially lead to a cumbersome Markov chain Monte Carlo sampler for posterior inference. In contrast, the Gaussian spectral embedding has no constraint on the latent positions, making the corresponding posterior computation relatively convenient.

\section{Numerical Examples} 
\label{sec:numerical_examples}

In this section, we evaluate the performance of the proposed posterior spectral embedding in comparison with the spectral-based Gaussian/adjacency spectral embedding through synthetic examples and the analysis of a Wikipedia graph dataset. For each of the numerical setups, the posterior inferences for the posterior spectral embedding and the Gaussian spectral embedding are carried out through a standard Metropolis-Hastings sampler with $15000$ iterations, where the first $5000$ iterations are discarded as burn-in, and $1000$ post-burn-in samples are collected every $10$ iterations. Throughout this section, the prior distribution on the latent positions is set to be the uniform distribution $\mathrm{Unif}(\calX)$ for the posterior spectral embedding, and the Gaussian prior in \eqref{eqn:GSE_Gaussian_prior} with $\sigma = 10$ for the Gaussian spectral embedding. 

\subsection{Stochastic Block Models} 
\label{sub:stochastic_block_models}

We first consider stochastic block models with positive semidefinite block probability matrices.  Three simulation setups are considered, and the number of communities $K$ and the unique values of their latent positions $[\bx_{01}^*,\ldots,\bx_{0K}^*]$ are tabulated in Table \ref{table:SBM_simulation_setup}. In each simulation setup, the numbers of vertices in different clusters are drawn from a multinomial distribution with the probability vector $[1/K,\ldots,1/K]\transpose$. 
\begin{table}[htbp!]
  \centering
  \caption{Simulation setup for stochastic block models}
  \begin{tabular}{c c c c}
    $K$ & $K = 3$  & $K = 5$   & $K = 7$\\
    $n$ & $n = 600$& $n = 1000$& $n = 1400$\\
    $[\bx_{01}^*,\ldots,\bx_{0K}^*]$&
    $
    \begin{bmatrix*}
    0.3 & 0.3 & 0.6\\ 
    0.3 & 0.6 & 0.3
    \end{bmatrix*}
    $
    & $
    \begin{bmatrix*}
    0.3 & 0.3 & 0.7 & 0.7 & 0.5\\
    0.3 & 0.7 & 0.3 & 0.7 & 0.5
    \end{bmatrix*}
    $ & $
    \begin{bmatrix*}
    0.2 & 0.2 & 0.2 & 0.5 & 0.5 & 0.5 & 0.7\\
    0.2 & 0.5 & 0.7 & 0.2 & 0.5 & 0.7 & 0.2
    \end{bmatrix*}
    $
  \end{tabular}%
  \label{table:SBM_simulation_setup}
\end{table}%

For the posterior spectral embedding, we compute the point estimator $\widehat\bX$ given in Theorem \ref{thm:point_estimator}. A point estimator for the Gaussian spectral embedding is also obtained in a similar fashion. Note that although the data generating models are stochastic block models, the posterior inferences are performed under the more general random dot product graph models as the working models. We perform the subsequent clustering based on the $K$-means procedure, as described in Section \ref{sec:sample_application_community_detection_in_stochastic_blockmodels}. 

In \cite{doi:10.1080/01621459.1971.10482356} the author suggested using the Rand index to evaluate the performance of clustering. Specifically, given two partitions $\calC_1 = \{c_{11},\ldots,c_{1r}\}$ and $\calC_2 = \{c_{21},\ldots,c_{2s}\}$ of $[n]$ (\emph{i.e.}, for $i = 1,2$, $c_{ij}$'s are disjoint and their union is $[n]$), denote $a$ the number of pairs of elements in $[n]$ that are both in the same set in $\calC_1$ and in the same set in $\calC_2$, and $b$ the number of pairs in $[n]$ that are neither in the same set in $\calC_1$ nor in the same set in $\calC_2$. Then the Rand index $\mathrm{RI}$ is defined as
$\mathrm{RI} = {2(a + b)}/\{n(n - 1)\}$.
The Rand index is a quantity between $0$ and $1$, with a higher value suggesting better accordance between the two  partitions. In particular, when $\calC_1$ and $\calC_2$ are identical up to relabeling,  $\mathrm{RI} = 1$. 

The comparisons of the Rand indices and the embedding errors $(1/n)\inf_\bW\|\widehat\bX - \bX_0\bW\|_{\mathrm{F}}^2$ for the three embedding approaches are tabulated in Table \ref{table:SBM_simulation_RI} and Table \ref{table:SBM_simulation_Error}, respectively. We see that the point estimates of the posterior spectral embedding are superior than the other two competitors in terms of higher Rand indices and lower embedding errors, whereas the point estimates of the Gaussian spectral embedding perform the worst in all three setups. All the three embedding approaches perform better as the number of vertices $n$ increases. In particular, the Gaussian spectral embedding does not produce satisfactory results when $n = 600$ and $n = 1000$, but performs decently well when $n = 1400$. 
\begin{table}
\caption{Stochastic block models: Rand indices of different clustering methods. PSE, the posterior spectral embedding; ASE, the adjacency spectral embedding; GSE, the Gaussian spectral embedding.}
\centering{%
\begin{tabular}{c c c c}
    Method & PSE (Point estimate) & ASE & GSE (Point estimate)\\
    $K = 3$, $n = 600$ & ${\bf 0.9171}$ & $0.9160$ & $0.7826$ \\
    $K = 5$, $n = 1000$ &   ${\bf 0.9584}$ & $0.9558$ & $0.7187$ \\
    $K = 7$, $n = 1400$ &   ${\bf 0.9964}$ & $0.9508$ & $0.9505$ 
  \end{tabular}%
}
\label{table:SBM_simulation_RI}
\end{table}

\begin{table}
  \caption{Stochastic block models: Errors $(1/n)\inf_\bW\|\widehat\bX - \bX\bW\|_{\mathrm{F}}^2$ of different methods. PSE, the posterior spectral embedding; ASE, the adjacency spectral embedding; GSE, the Gaussian spectral embedding.}
  \centering{%
  \begin{tabular}{c c c c}
    Method & PSE (Point estimate) & ASE & GSE (Point estimate)\\
    $K = 3$, $n = 600$ & ${\bf 1.281\times 10^{-2}}$ & $1.560\times 10^{-2}$ & $2.792\times 10^{-2}$ \\
    $K = 5$, $n = 1000$ &   ${\bf 6.851\times 10^{-3}}$ & $8.548\times 10^{-3}$ & $1.418\times 10^{-2}$ \\
    $K = 7$, $n = 1400$ &   ${\bf 3.460\times 10^{-3}}$ & $3.582\times 10^{-3}$ & $4.200\times 10^{-3}$ 
  \end{tabular}%
  }
  \label{table:SBM_simulation_Error}
\end{table}%
We also visualize 
the three embeddings of the observed adjacency matrix for the three setups in Figures \ref{fig:SBM_simulation_embedding_K3}, \ref{fig:SBM_simulation_embedding_K5}, and \ref{fig:SBM_simulation_embedding_K7}, respectively. The estimation errors of the point estimates under the Gaussian spectral embedding can be clearly recognized from the figures when $n = 600$ and $n = 1000$. We also observe that for  the underlying true latent position $[0.7, 0.7]\transpose$ when  $K = 5$, the adjacency spectral embedding and the point estimator  of the Gaussian spectral embedding produce estimates that may stay outside the latent position space $\calX$, whereas the point estimates of the posterior spectral embedding always lie the space $\calX$. This agrees with the fact that the posterior spectral embedding requires the latent positions to stay inside $\calX$, whereas the Gaussian spectral embedding and the adjacency spectral embedding do not have such constraints.  
\begin{figure}[htbp]
  \centerline{\includegraphics[width=.9\textwidth]{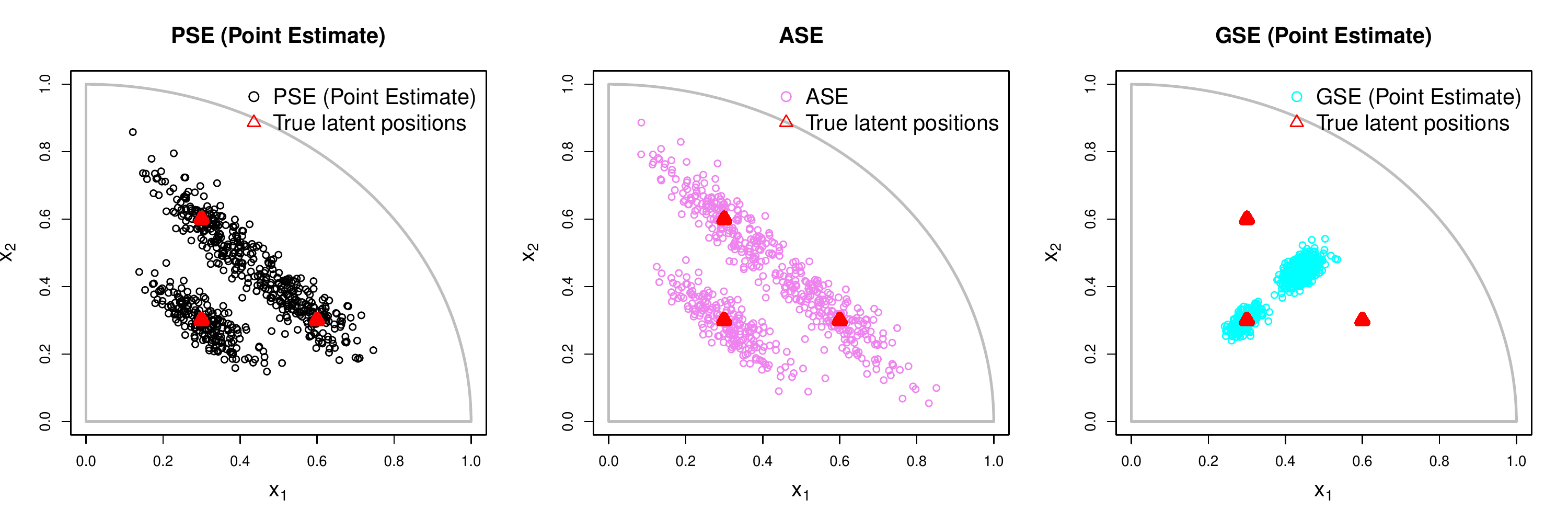}}
  \caption{Visualization of the three embedding approaches for the stochastic block models with $K = 3$; The red triangles are the true latent positions, and the scatter points are embedding estimates of the latent positions. }
  \label{fig:SBM_simulation_embedding_K3}
\end{figure}
\begin{figure}[htbp]
  \centerline{\includegraphics[width=.9\textwidth]{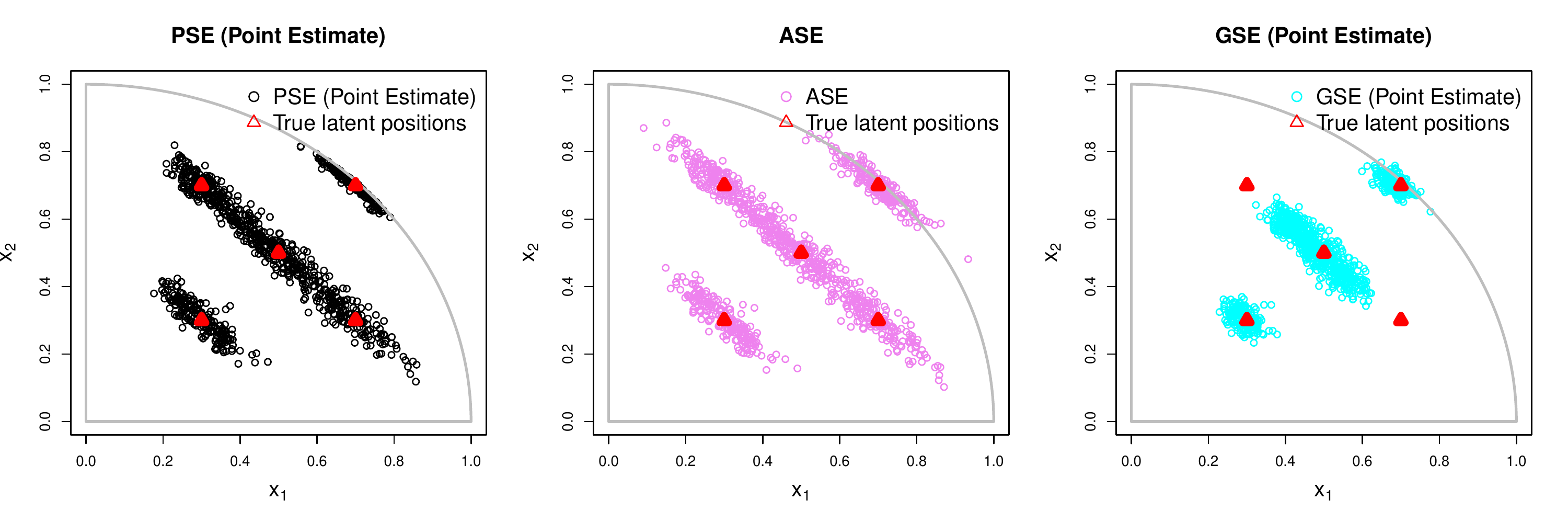}}
  \caption{Visualization of the three embedding approaches for the stochastic block models with $K = 5$; The red triangles are the true latent positions, and the scatter points are embedding estimates of the latent positions. }
  \label{fig:SBM_simulation_embedding_K5}
\end{figure}
\begin{figure}[htbp]
  \centerline{\includegraphics[width=.9\textwidth]{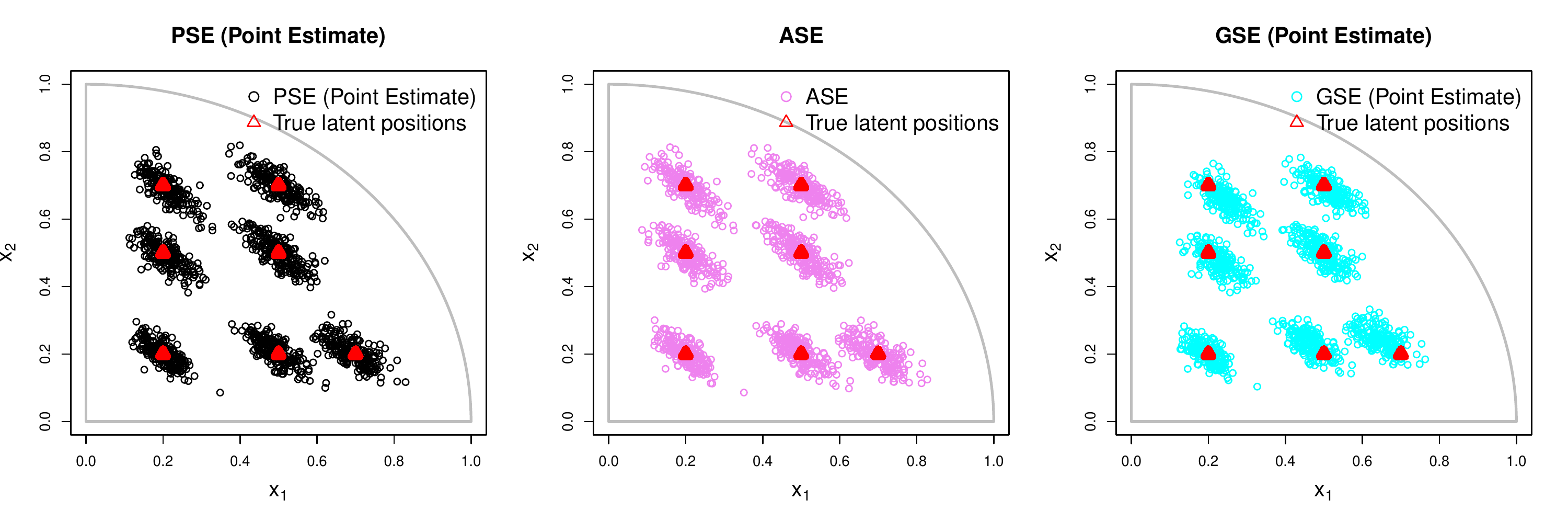}}
  \caption{Visualization of the three embedding approaches for the stochastic block models with $K = 7$; The red triangles are the true latent positions, and the scatter points are embedding estimates of the latent positions. }
  \label{fig:SBM_simulation_embedding_K7}
\end{figure}


\subsection{A Hardy-Weinberg Curve Example} 
\label{sub:latent_structure_models}

We next consider the following Hardy-Weinberg curve example presented in \cite{athreya2018estimation}. Specifically, the observed adjacency matrix $\bY$ is drawn from the random dot product graph model with a latent position matrix  $\bX_0 = [\bx_{01},\ldots,\bx_{0n}]\transpose\in\mathbb{R}^{n\times d}$, where $n = 2000$ and $d = 3$. The latent positions $\bx_{0i}$'s are drawn from the Hardy-Weinberg curve as follows: $\bx_{0i} = [t_i^2, (1 - t_i)^2, 2t_i(1 - t_i)]\transpose\in\mathbb{R}^3$, where $t_1,\ldots,t_n$ are independently drawn from $\mathrm{Unif}(0, 1)$. The latent positions $\bx_{0i}$'s can also be viewed as random samples drawn from the one-dimensional Hardy-Weinberg curve $C(t) = [t^2, (1 - t)^2, 2t(1 - t)]\transpose\in\mathbb{R}^3$, $t\in[0, 1]$, as depicted in Panel (a) of Figure \ref{fig:LSM_Simulation}. We plot 
the embeddings of the observed adjacency matrix under the three approaches 
in panels (b), (c), and (d) of Figure \ref{fig:LSM_Simulation}, showing that 
the point estimates of the posterior spectral embedding produce embeddings of the latent positions that are closer to their true values than the other two competitors do. In particular, the point estimates of the Gaussian spectral embedding are not able to capture the shape of the Hardy-Weinberg curve. The embedding errors $(1/n)\inf_\bW\|\widehat\bX - \bX\bW\|_{\mathrm{F}}^2$ for the three embedding approaches are also presented in Table \ref{table:LSM_simulation_Error}, which is in accordance with the aforementioned observation. 

\begin{figure}
  \centerline{\includegraphics[width=.75\textwidth]{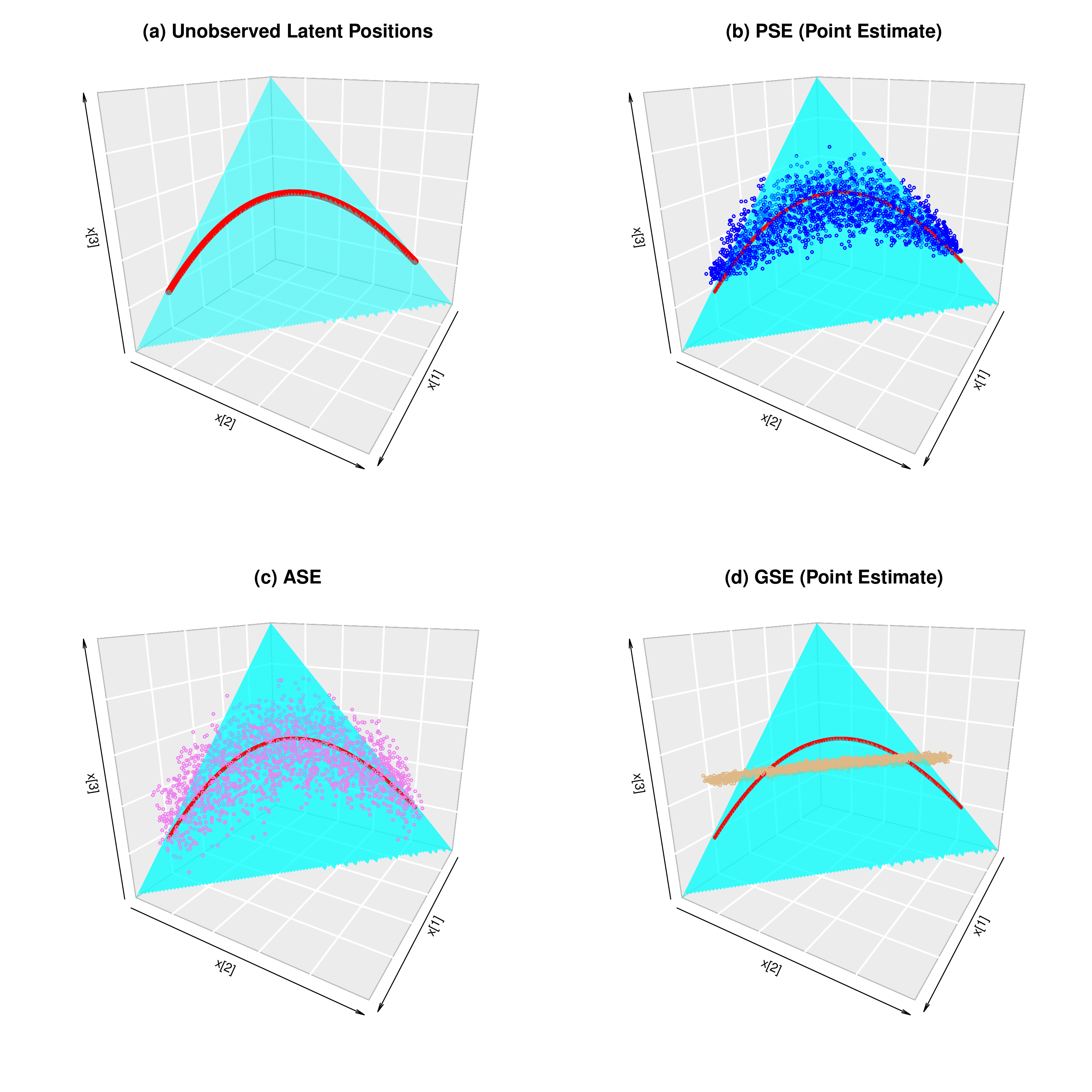}}
  \caption{The Hardy-Weinberg curve example: The scatter points are embedding estimates of the latent positions using the point estimates of the posterior spectral embedding, the adjacency spectral embedding, and the point estimates of the Gaussian spectral embedding, respectively, and the red curve is the underlying unobserved Hardy-Weinburg curve $C(t) = [t^2, (1 - t)^2, 2t(1 - t)]$, $t\in[0, 1]$. }
  \label{fig:LSM_Simulation}
\end{figure}
\begin{table}
  \caption{Hardy-Weinberg curve example: Errors $(1/n)\inf_\bW\|\widehat\bX - \bX\bW\|_{\mathrm{F}}^2$ of different methods. PSE, the posterior spectral embedding; ASE, the adjacency spectral embedding; GSE, the Gaussian spectral embedding.}
  \centering{%
  \begin{tabular}{c c c c}
    Method & PSE (Point estimate) & ASE & GSE (Point estimate)\\
    $(1/n)\inf_\bW\|\widehat\bX - \bX\bW\|_{\mathrm{F}}^2$ &${\bf 9.148\times 10^{-3}}$ & $1.603\times 10^{-2}$ & $1.462\times 10^{-2}$
  \end{tabular}%
  }
  \label{table:LSM_simulation_Error}
\end{table}%

\subsection{Wikipedia Graph Data} 
\label{sub:wikipedia_graph_data}

Our final example is the analysis of a Wikipedia graph dataset available at \url{http://www.cis.jhu.edu/~parky/Data/data.html}. Specifically, the dataset we consider consists of a network of articles that are within two hyperlinks of the article ``Algebraic Geometry'', resulting in $n = 1382$ vertices. In addition, the articles involved are manually labeled as one of the following $6$ classes: People, Places, Dates, Things, Math, and Categories. 

We first estimate the embedding dimension $d$ by an ad-hoc method: We examine the plot of the singular values of the observed adjacency matrix (see Figure \ref{fig:Wikidata_Scree_plot}), and directly locate an ``elbow'' that suggests a cut-off between the signal dimension and the noise dimension. For this Wikipedia dataset, the ``elbow'' is located at $\widehat d = 3$.
\begin{figure}[htbp]
  \centerline{\includegraphics[width=.5\textwidth]{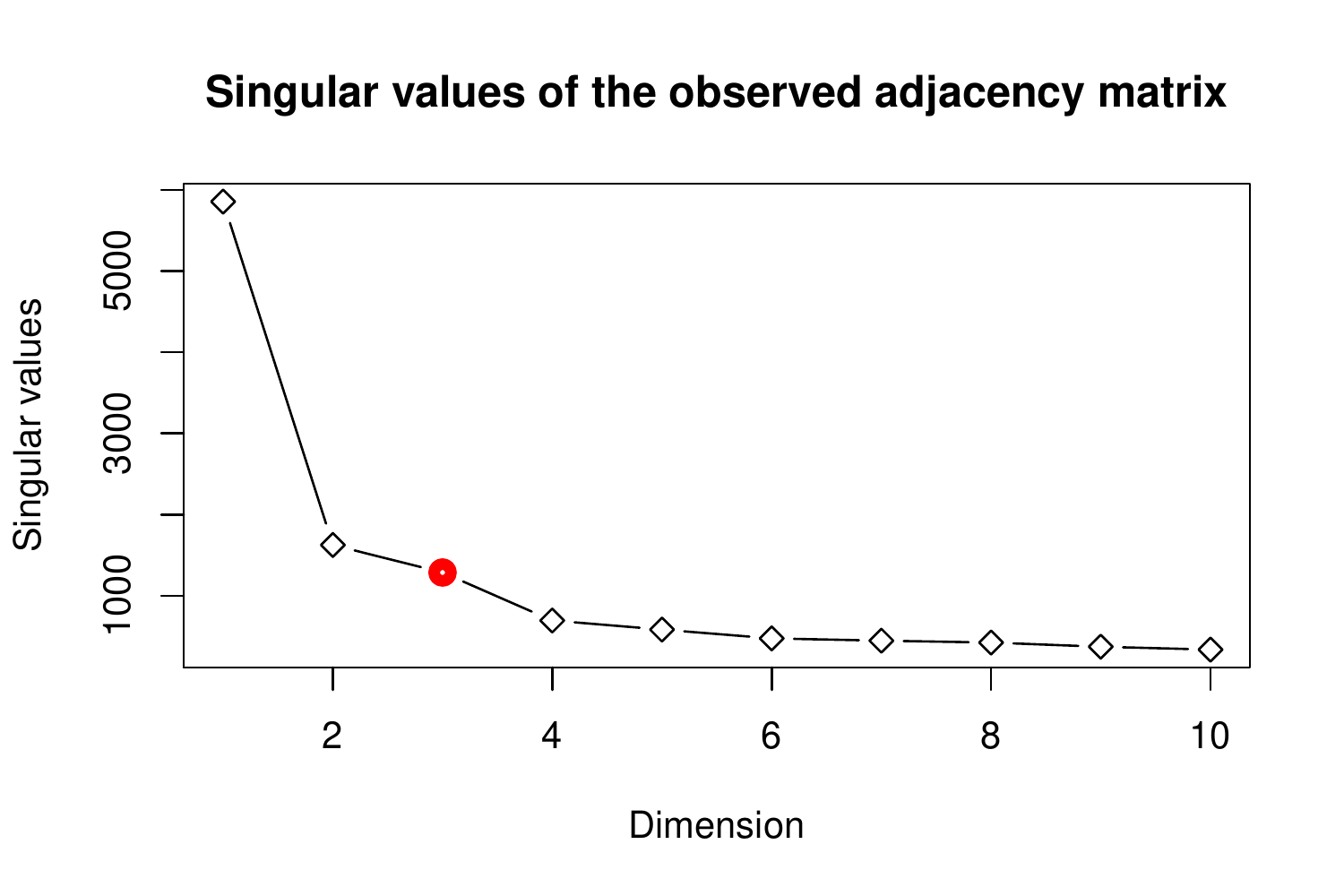}}
  \caption{Wikipedia graph data: The singular values plot of the observed adjacency matrix. An ``elbow'' can be located at $\widehat d = 3$ (the red circle). }
  \label{fig:Wikidata_Scree_plot}
\end{figure}

We then conduct the posterior inferences under the posterior spectral embedding, the Gaussian spectral embedding, along with the adjacency spectral embedding to obtain the estimates of the latent positions based on $\widehat d = 3$. To obtain the clustering results, we further apply the \verb|mclust| package in R \citep{fraley2012mclust} to these embedding estimates with $K = 6$, as discussed in Section \ref{sec:sample_application_community_detection_in_stochastic_blockmodels}, and compute their Rand indices with the manually labeled classes. The results are presented in Table \ref{table:Wikidata_RI}, and we see that the point estimates of the posterior spectral embedding outperform the other two approaches. 
\begin{table}
  \caption{Wikipedia Graph Data: Rand indices of different clustering methods. PSE, the posterior spectral embedding; ASE, the adjacency spectral embedding; GSE, the Gaussian spectral embedding.}
  \centering{%
  \begin{tabular}{c c c c}
    Method & PSE (Point estimator) & ASE & GSE (Point estimator)\\
    Rand Index &  ${\bf 0.7451}$ & $ 0.7213$ & $0.7155$ 
  \end{tabular}%
  }
  \label{table:Wikidata_RI}
\end{table}%


\section{Discussion} 
\label{sec:discussion}
There are several potential extensions of the proposed methodology and the corresponding theory. Firstly, the framework we have considered so far are based on the fact that the observed adjacency matrix of the network are Bernoulli random variables (\emph{i.e.}, a unweighted network). 
It is also common to encounter weighted network data in a wide range of applications \citep{schein2016bayesian,tang2017robust}. Our theory and method tailored for Bernoulli distributed unweighted adjacency matrix can be easily extended to weighted adjacency matrix, the elements of which typically follow distributions of more general forms. In particular, for a weighted adjacency matrix with a specific distribution, the posterior spectral embedding can be generalized similarly to accommodate the corresponding likelihood information. Alternatively, the Gaussian spectral embedding proposed in Section \ref{sec:spectral_based_bayesian_estimation} can be applied when the elements of the weighted adjacency matrix are sub-Gaussian random variables after centering. 
Secondly, the latent positions of the vertices $\bX = [\bx_1,\ldots,\bx_n]\transpose$ are considered as deterministic parameters to be estimated throughout this work. On the other hand, it is also useful to model the latent positions in the random dot product graph model as random variables independently sampled from an underlying distribution $F$ supported on $\calX$ \citep{tang2017}. We can directly apply the technique for estimating $\bX$ in this work to the case where $\bx_i$'s are random, but it requires more effort to explore the theoretical properties 
of the resulting estimator. Last but not least, we assume that the embedding dimension $d$ is known for the ease of the  mathematical analysis in Section \ref{sec:optimal_bayesian_estimation}. When $d$ is unknown, we can first   consistently estimate $d$ by some estimator $\widehat d$ (see, for example, \citealp{chatterjee2015}), and then perform the posterior/Gaussian spectral embedding based on $\widehat d$.  Such a procedure still guarantees the resulting posterior contraction rates. Alternatively, we can assign a prior distribution on $d$ and let the posterior distribution adaptively select the correct dimension with moderate uncertainty. The challenge, nevertheless, is that it is non-trivial to design a reversible-jump sampler to address the cross-dimensional Monte Carlo problem for the random dot product graph model. We defer the computational issue with random $d$ to the future work. On the other hand, the current computational method for the posterior spectral embedding relies on a relatively time-consuming Metropolis-Hastings sampler. The computational cost becomes expensive when the number of vertices grows large. We believe that tackling the computational bottleneck of the posterior spectral embedding will be worthy for a wide range of network data analysis problems as well. 

\clearpage

\begin{center}
	\begin{Large}
		\textbf{Supplementary Material for ``Optimal Bayesian estimation for random dot product graphs''}
	\end{Large}
\end{center}
\appendix
\counterwithin{lemma}{section}
\counterwithin{theorem}{section}

\section{A useful matrix decomposition} 
\label{sec:a_useful_matrix_decomposition}

Before proceeding to the proofs of the main results, we present a slightly technical yet useful matrix decomposition that will be used throughout the supplementary material. 
\begin{lemma}
\label{lemma:Decomposition}
Let $\bX,\bX_0\in\mathbb{R}^{n\times d}$ be $n\times d$ matrices and $\bP_0 = \bX_0\bX_0\transpose$. Let $\bX = \bU\bS^{1/2}\bV\transpose$ and $\bX_0 = \bU_0\bS_0^{1/2}\bV_0\transpose$ be the singular value decomposition of $\bX$ and $\bX_0$, respectively, where $\bU,\bU_0\in\mathbb{O}(n, d)$, $\bV,\bV_0\in\mathbb{O}(d)$, and $\bS^{1/2},\bS_0^{1/2}$ are diagonal matrices with non-negative entries. Further let $\bU_0\transpose{}\bU = \bW_1\bSigma\bW_2\transpose$ be the singular value decomposition of $\bU_0\transpose{}\bU$, where $\bW_1,\bW_2\in\mathbb{O}(d)$, and $\bSigma$ is the diagonal matrix of singular values of $\bU_0\transpose\bU$. Denote $\bW_\bU = \bW_1\bW_2\transpose$. Assume that $[\bU, \bU_\perp]\in\mathbb{O}(n)$, namely, the columns of $\bU_\perp$ are orthonormal and spans the orthogonal complement of $\mathrm{Span}(\bU)$, and $\bP = \bX\bX\transpose + \bU_\perp\bS_\perp\bU_\perp\transpose{}$ for some diagonal $\bS_\perp = \mathrm{diag}(\sigma_{d + 1},\ldots,\sigma_n)$, where $\sigma_1(\bX)\geq\ldots\geq\sigma_d(\bX)\geq \sigma_{d + 1}\geq\ldots\geq \sigma_n$. 
Then the following decomposition holds:
\begin{align*}
\bX\bV - \bX_0\bV_0\bW_\bU & = (\bP - \bP_0)\bU_0\bS_0^{-1/2}\bW_\bU + (\bP - \bP_0)\bU_0(\bW_\bU\bS^{-1/2} - \bS_0^{-1/2}\bW_\bU)\\
&\quad - \bU_0\bU_0\transpose(\bP - \bP_0)\bU_0\bW_\bU\bS^{-1/2} + (\eye - \bU_0\bU_0\transpose)(\bP - \bP_0)\bR_3\bS^{-1/2}\\
&\quad + \bR_1\bS^{1/2} + \bU_0\bR_2,
\end{align*}
where
\begin{align*}
\bR_1 = \bU_0\bU_0\transpose\bU - \bU_0\bW_\bU,\quad
\bR_2 = \bW_\bU\bS^{1/2} - \bS_0^{1/2}\bW_\bU,\quad\text{and}\quad
\bR_3 = \bU - \bU_0\bW_\bU.
\end{align*}
\end{lemma}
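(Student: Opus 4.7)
The plan is to reduce everything to an identity about $\bU\bS^{1/2} - \bU_0\bS_0^{1/2}\bW_\bU$. Since $\bV,\bV_0\in\mathbb{O}(d)$, the SVDs give $\bX\bV = \bU\bS^{1/2}$ and $\bX_0\bV_0 = \bU_0\bS_0^{1/2}$, so proving the lemma is equivalent to decomposing $\bU\bS^{1/2} - \bU_0\bS_0^{1/2}\bW_\bU$. An elementary rearrangement, using $\bU = \bU_0\bW_\bU + \bR_3$ and $\bW_\bU\bS^{1/2} = \bS_0^{1/2}\bW_\bU + \bR_2$ (from the definitions of $\bR_2,\bR_3$), gives the preliminary identity
\[
\bU\bS^{1/2} - \bU_0\bS_0^{1/2}\bW_\bU \;=\; \bR_3\bS^{1/2} + \bU_0\bR_2,
\]
which already accounts for the last two terms $\bR_1\bS^{1/2} + \bU_0\bR_2$ once we further split $\bR_3\bS^{1/2}$ properly.

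Next I would further decompose $\bR_3\bS^{1/2}$ using the orthogonal projector $\bU_0\bU_0\transpose$. A direct computation shows $\bU_0\bU_0\transpose\bR_3 = \bU_0\bU_0\transpose\bU - \bU_0\bW_\bU = \bR_1$, and consequently $(\eye - \bU_0\bU_0\transpose)\bR_3 = (\eye - \bU_0\bU_0\transpose)\bU$. Thus $\bR_3\bS^{1/2} = \bR_1\bS^{1/2} + (\eye - \bU_0\bU_0\transpose)\bU\bS^{1/2}$, and the task reduces to rewriting $(\eye - \bU_0\bU_0\transpose)\bU\bS^{1/2}$ in terms of $\bP - \bP_0$.

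The key step is the identity $\bP\bU = \bU\bS$, which follows from $\bU_\perp\transpose\bU = 0$ and the given spectral representation of $\bP$. This implies $\bU\bS^{1/2} = \bP\bU\bS^{-1/2}$. Combined with the annihilation identity $(\eye - \bU_0\bU_0\transpose)\bP_0 = \zero$ (immediate from $\bP_0 = \bU_0\bS_0\bU_0\transpose$), I obtain
\[
(\eye - \bU_0\bU_0\transpose)\bU\bS^{1/2} = (\eye - \bU_0\bU_0\transpose)(\bP - \bP_0)\bU\bS^{-1/2}.
\]
Substituting $\bU = \bU_0\bW_\bU + \bR_3$ on the right splits this into a $(\bP - \bP_0)\bU_0\bW_\bU\bS^{-1/2}$ piece, its orthogonal correction $-\bU_0\bU_0\transpose(\bP - \bP_0)\bU_0\bW_\bU\bS^{-1/2}$ (term 3), and the residual $(\eye - \bU_0\bU_0\transpose)(\bP - \bP_0)\bR_3\bS^{-1/2}$ (term 4).

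Finally, I would rewrite the piece $(\bP - \bP_0)\bU_0\bW_\bU\bS^{-1/2}$ by the trivial add-and-subtract $(\bP-\bP_0)\bU_0\bS_0^{-1/2}\bW_\bU + (\bP-\bP_0)\bU_0(\bW_\bU\bS^{-1/2} - \bS_0^{-1/2}\bW_\bU)$, producing terms 1 and 2 of the claim. Assembling everything gives the stated decomposition. The proof is essentially algebraic bookkeeping; the main obstacle is simply recognizing the two structural identities that drive it, namely $\bR_1 = \bU_0\bU_0\transpose\bR_3$ and $(\eye - \bU_0\bU_0\transpose)\bP_0 = \zero$, the latter being the mechanism that lets one legitimately introduce the error matrix $\bP - \bP_0$ while working against the projector $\eye - \bU_0\bU_0\transpose$. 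Invertibility of $\bS$ is implicitly used and should either be assumed or handled by a Moore--Penrose convention on the columns where $\bX$ is rank-deficient.
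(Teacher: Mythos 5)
Your proposal is correct and follows essentially the same route as the paper: reduce to $\bU\bS^{1/2} - \bU_0\bS_0^{1/2}\bW_\bU$, peel off $\bR_1\bS^{1/2} + \bU_0\bR_2$, use $\bP\bU = \bU\bS$ and the annihilation $(\eye - \bU_0\bU_0\transpose)\bP_0 = \zero$ to introduce $\bP - \bP_0$, split $\bU = \bU_0\bW_\bU + \bR_3$, and finish with the add-and-subtract between $\bW_\bU\bS^{-1/2}$ and $\bS_0^{-1/2}\bW_\bU$. The only difference is cosmetic ordering (you first isolate $\bR_3\bS^{1/2} + \bU_0\bR_2$ and then split $\bR_3 = \bR_1 + (\eye - \bU_0\bU_0\transpose)\bU$, while the paper inserts $\bU_0\bU_0\transpose\bU$ directly in the first line); your observation $\bR_1 = \bU_0\bU_0\transpose\bR_3$ and your remark on invertibility of $\bS$ are both sound and make the bookkeeping a touch cleaner.
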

\begin{proof}
The proof is similar to that of Theorem 50 in \cite{JMLR:v18:17-448}, and we present it here for completeness. First write
\begin{align*}
\bX\bV - \bX_0\bV_0\bW_\bU & = \bU\bS^{1/2} - \bU_0\bS_0^{1/2}\bW_\bU 
= \bU\bS^{1/2} - \bU_0\bW_\bU\bS^{1/2} + \bU_0\bW_\bU\bS^{1/2} - \bU_0\bS_0^{1/2}\bW_\bU\\
& = (\bU - \bU_0\bU_0\transpose\bU)\bS^{1/2} + (\bU_0\bU_0\transpose\bU - \bU_0\bW_\bU)\bS^{1/2} + \bU_0(\bW_\bU\bS^{1/2} - \bS_0^{1/2}\bW_\bU)\\
& = (\bU\bS^{1/2} - \bU_0\bU_0\transpose\bU\bS^{1/2}) + \bR_1\bS^{1/2} + \bU_0\bR_2\\
& = (\bP\bU\bS^{-1/2} - \bU_0\bU_0\transpose\bP\bU\bS^{-1/2}) + \bR_1\bS^{1/2} + \bU_0\bR_2,
\end{align*}
where the last equality is due to the fact that $\bP\bU\bS^{-1/2} = \bU\bS^{1/2}$. Observe that $\bU_0\bU_0\transpose\bP_0 = \bP_0$, then we re-arrange the term in the parenthesis in the preceding display to
\begin{align*}
&\bP\bU\bS^{-1/2} - \bU_0\bU_0\transpose\bP\bU\bS^{-1/2}\\
&\quad = (\bP - \bP_0)\bU\bS^{-1/2} - \bU_0\bU_0\transpose(\bP - \bP_0)\bU\bS^{-1/2}\\
&\quad = (\bP - \bP_0)\bU\bS^{-1/2} - \bU_0\bU_0\transpose(\bP - \bP_0)(\bU - \bU_0\bW_\bU)\bS^{-1/2}
 - \bU_0\bU_0\transpose(\bP - \bP_0)\bU_0\bW_\bU\bS^{-1/2}\\
&\quad = (\bP - \bP_0)\bU\bS^{-1/2} - \bU_0\bU_0\transpose(\bP - \bP_0)\bR_3\bS^{-1/2}
 - \bU_0\bU_0\transpose(\bP - \bP_0)\bU_0\bW_\bU\bS^{-1/2}\\
&\quad = (\bP - \bP_0)\bU\bS^{-1/2} - (\bP - \bP_0)\bR_3\bS^{-1/2} + (\eye - \bU_0\bU_0\transpose)(\bP - \bP_0)\bR_3\bS^{-1/2}
\\&\quad\quad
 - \bU_0\bU_0\transpose(\bP - \bP_0)\bU_0\bW_\bU\bS^{-1/2}\\
&\quad = (\bP - \bP_0)\bU_0\bW_\bU\bS^{-1/2} + (\eye - \bU_0\bU_0\transpose)(\bP - \bP_0)\bR_3\bS^{-1/2}
 - \bU_0\bU_0\transpose(\bP - \bP_0)\bU_0\bW_\bU\bS^{-1/2}.
\end{align*}
We observe that
\[
(\bP - \bP_0)\bU_0\bW_\bU\bS^{-1/2} = (\bP - \bP_0)\bU_0\bS_0^{-1/2}\bW_\bU + (\bP - \bP_0)\bU_0(\bW_\bU\bS^{-1/2} - \bS_0^{-1/2}\bW_\bU),
\]
and thus complete the proof.
\end{proof}

When the embedding dimension $d$ is one, we obtain immediately the following rank-one corollary:
\begin{corollary}\label{corr:rank_one_corr}
Let $\bx,\bx_0\in(0, 1)^n$ be $n$-dimensional vectors. Denote $\bE = \bx\bx\transpose - \bx_0\bx_0\transpose$. Then the following decomposition holds:
\begin{align*}
\bx - \bx_0 & = \frac{\bE\bx_0}{\|\bx_0\|_2^2} + \left(\frac{1}{\|\bx\|_2} - \frac{1}{\|\bx_0\|_2}\right)\frac{\bE\bx_0}{\|\bx_0\|_2} - \frac{\bx_0\bx_0\transpose\bE\bx_0}{\|\bx\|_2\|\bx_0\|_2^3} + \left(\eye - \frac{\bx_0\bx_0\transpose}{\|\bx_0\|_2^2}\right)\frac{\bE}{\|\bx\|_2}\left(\frac{\bx}{\|\bx\|_2} - \frac{\bx_0}{\|\bx_0\|_2}\right)\\
& \quad + \|\bx\|\left(\frac{\bx_0\bx_0\transpose\bx}{\|\bx_0\|_2^2\|\bx\|_2} - \frac{\bx_0}{\|\bx_0\|_2}\right) + \left(\|\bx\|_2 - \|\bx_0\|_2\right)\frac{\bx_0}{\|\bx_0\|_2}.
\end{align*}
Furthermore, the following inequality holds:
\begin{align*}
\|\bx - \bx_0\|_2 & \leq 
\frac{3\|\bE\|_2}{\|\bx_0\|} + \frac{4\|\bE\|_2}{\|\bx\|_2} + 4\frac{(\|\bx\|_2^2 + \|\bx_0\|_2^2)\|\bE\|_2}{\|\bx_0\|_2^2\|\bx\|_2}.
\end{align*}
\begin{proof}
We first prove the decomposition result. It suffices to show that $\bW_\bU = 1$. In fact, $\bx\transpose\bx_0 > 0$, it follows that $(\bx\transpose\bx_0)/(\|\bx\|_2\|\bx_0\|_2)\in (0, 1)$. Therefore, we can choose the left and right singular vectors of $(\bx\transpose\bx_0)/(\|\bx\|_2\|\bx_0\|_2)$ to be $1$, and consequently, the corresponding orthogonal matrix $\bW_\bU\in\mathbb{R}^{1\times 1}$ is also $1$. 

Now we move forward to prove the inequality result. Observe that
\begin{align*}
&\left\|\left(\frac{1}{\|\bx\|_2} - \frac{1}{\|\bx_0\|_2}\right)\frac{\bE\bx_0}{\|\bx_0\|_2} - \frac{\bx_0\bx_0\transpose\bE\bx_0}{\|\bx\|_2\|\bx_0\|_2^3} + \left(\eye - \frac{\bx_0\bx_0\transpose}{\|\bx_0\|_2^2}\right)\frac{\bE}{\|\bx\|_2}\left(\frac{\bx}{\|\bx\|_2} - \frac{\bx_0}{\|\bx_0\|_2}\right)\right\|_2\\
&\quad \leq \left(\frac{1}{\|\bx\|_2} + \frac{1}{\|\bx_0\|_2}\right)\|\bE\|_2 + \frac{\|\bE\|_2}{\|\bx\|_2} + \frac{2\|\bE\|_2}{\|\bx\|_2} = \frac{\|\bE\|_2}{\|\bx_0\|_2} + \frac{4\|\bE\|_2}{\|\bx\|_2}.
\end{align*}
In addition, by Davis-Kahan theorem and Weyl's inequality,
\begin{align*}
\left\|
\frac{\bx_0\bx_0\transpose\bx}{\|\bx_0\|_2^2\|\bx\|_2} - \frac{\bx_0}{\|\bx_0\|_2}
\right\|_2&\leq \frac{4\|\bE\|_2^2}{\|\bx\|_2^2\|\bx_0\|^2_2}\leq \frac{4(\|\bx\|_2^2 + \|\bx_0\|_2^2)\|\bE\|_2}{\|\bx\|_2^2\|\bx_0\|_2^2},\\
|\|\bx\|_2 - \|\bx_0\|_2|& = \frac{|\|\bx\|_2^2 - \|\bx_0\|_2^2|}{\|\bx\|_2 + \|\bx_0\|_2}\leq\frac{\|\bE\|_2}{\|\bx_0\|}.
\end{align*}
The proof is then completed by combining above derivations. 
\end{proof}
\end{corollary}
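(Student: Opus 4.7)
The plan is to obtain both claims as specializations of Lemma \ref{lemma:Decomposition} to the rank-one case $d = 1$. In that setting each of $\bx, \bx_0$ is an $n \times 1$ matrix, so the SVD factors degenerate to $\bU = \bx/\|\bx\|_2$, $\bS^{1/2} = \|\bx\|_2$, $\bV = 1$ (and analogously for $\bx_0$); choosing $\bS_\perp = 0$ gives $\bP = \bx\bx\transpose$ and $\bP - \bP_0 = \bE$. The first sub-step is to verify that the alignment scalar $\bW_\bU$ equals $1$. Since $\bx, \bx_0 \in (0, 1)^n$ have strictly positive entries, the $1 \times 1$ ``matrix'' $\bU_0\transpose\bU = \bx_0\transpose\bx/(\|\bx\|_2\|\bx_0\|_2)$ lies in $(0, 1)$, so its trivial SVD has $\bW_1 = \bW_2 = 1$. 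Substituting these degenerate factors and the definitions of $\bR_1, \bR_2, \bR_3$ into the six-term identity of Lemma \ref{lemma:Decomposition}, with $\bS_0^{-1/2}$ replaced by $1/\|\bx_0\|_2$ throughout, reproduces the claimed decomposition of $\bx - \bx_0$ after routine simplification.

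For the norm inequality I would apply the triangle inequality across the six summands and bound each one individually. Four of them are controlled by elementary manipulations: $\|\bE \bx_0\|_2 \leq \|\bE\|_2 \|\bx_0\|_2$ handles the first term; $|1/\|\bx\|_2 - 1/\|\bx_0\|_2| \leq 1/\|\bx\|_2 + 1/\|\bx_0\|_2$ handles the second; the fact that $\eye - \bx_0\bx_0\transpose/\|\bx_0\|_2^2$ is an orthogonal projector (spectral norm $\leq 1$), together with $\|\bR_3\|_2 \leq 2$ since the two arguments are unit vectors, controls the fourth; and for the sixth, writing $|\|\bx\|_2 - \|\bx_0\|_2| = |\|\bx\|_2^2 - \|\bx_0\|_2^2|/(\|\bx\|_2 + \|\bx_0\|_2)$ and applying Weyl's inequality to the unique nonzero eigenvalues of the rank-one matrices $\bx\bx\transpose, \bx_0\bx_0\transpose$ yields a bound of $\|\bE\|_2/\|\bx_0\|_2$.

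The one step deserving slightly more care is the fifth summand $\|\bx\|_2 \bR_1$. Setting $\bu = \bx/\|\bx\|_2$ and $\bu_0 = \bx_0/\|\bx_0\|_2$, one has $\bR_1 = (\bu_0\transpose\bu - 1)\bu_0$, so $\|\bR_1\|_2 = 1 - \bu_0\transpose\bu = \|\bu - \bu_0\|_2^2/2$. The Davis--Kahan $\sin\Theta$ theorem applied to the rank-one perturbation $\bx\bx\transpose = \bx_0\bx_0\transpose + \bE$, with spectral gap bounded below via Weyl's inequality, yields $\|\bu - \bu_0\|_2 \lesssim \|\bE\|_2/(\|\bx\|_2 \|\bx_0\|_2)$; thus $\|\bx\|_2 \|\bR_1\|_2 \lesssim \|\bE\|_2^2/(\|\bx\|_2 \|\bx_0\|_2^2)$. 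Converting one factor of $\|\bE\|_2$ via the crude bound $\|\bE\|_2 \leq \|\bx\|_2^2 + \|\bx_0\|_2^2$ then produces $(\|\bx\|_2^2 + \|\bx_0\|_2^2)\|\bE\|_2/(\|\bx\|_2 \|\bx_0\|_2^2)$, matching the last summand of the stated inequality. The main---though minor---obstacle is bookkeeping the constants; the positivity assumption $\bx, \bx_0 \in (0, 1)^n$ is used precisely to eliminate any sign ambiguity in the eigenvector alignment and ensure $\bW_\bU = 1$.
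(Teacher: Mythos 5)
Your proposal follows the paper's own proof in essentially every respect: specializing Lemma \ref{lemma:Decomposition} to $d=1$, using the positivity of the entries to force $\bW_\bU = 1$, and then bounding each summand of the resulting decomposition with the triangle inequality, a projector bound, Weyl's inequality, and Davis--Kahan. The only cosmetic difference is that for the fifth summand you pass through $\|\bR_1\|_2 = \tfrac{1}{2}\|\bu-\bu_0\|_2^2$ and then square a Davis--Kahan bound, whereas the paper bounds $\|\bR_1\|_2$ directly by $4\|\bE\|_2^2/(\|\bx\|_2^2\|\bx_0\|_2^2)$ --- these are the same estimate.
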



\section{Proof of the Minimax Lower Bound} 
\label{sec:minimax_lower_bound}

It is routine to leverage Fano's lemma and its variations to derive minimax lower bounds for a wide class of statistical problems. Specifically, 
we will rely on the following version of Fano's lemma to construct the minimax lower bound for the random dot product graph model. For a totally bounded pseudo-metric space $(T, \rho)$, for any $\eps > 0$, the covering number $\calN(\eps, T, \rho)$ is the minimum number of balls of radius $\eps$ (with respect to the metric $\rho$) that are needed to cover $T$. 

\begin{lemma}[Proposition 3, \citealp{cai2013sparse}]
\label{lemma:Fano_lemma}
Let $(\Theta,\rho)$ be a totally bounded pseudo-metric space and $\{\prob_\theta:\theta\in\Theta\}$ a collection of distributions. Let
$A = \sup_{\theta\neq\theta'}{D(\prob_\theta||\prob_{\theta'})}/{\rho^2(\theta,\theta')}$.
If there exist $0 < c_0 < c_1 < \infty$, $\eps_0 > 0$, and $\alpha \geq1$ such that 
\[
\left(\frac{c_0}{\epsilon}\right)^\alpha\leq \calN(\eps, \Theta, \rho)\leq \left(\frac{c_1}{\eps}\right)^{\alpha}
\]
for all $\eps\in(0, \eps_0)$, then 
\[
\inf_{\hat\theta}\sup_{\theta\in\Theta}\expect_\theta\{\rho^2(\hat\theta,\theta)\} \geq\frac{c_0^2}{840c_1^2}\min\left(\frac{\alpha}{A}, \eps_0^2\right).
\] 
\end{lemma}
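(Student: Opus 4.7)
The plan is to prove this generalized Fano-type minimax lower bound by combining a local-packing argument with the standard Fano inequality and Markov's inequality, then optimizing over two free scale parameters.

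First I would extract a packing from the covering-number lower bound: a maximal $\eps$-packing $\{\theta_1,\ldots,\theta_N\}$ of $\Theta$ satisfies $N\geq\calN(\eps,\Theta,\rho)\geq(c_0/\eps)^\alpha$ and its points are pairwise at distance at least $\eps$. To control the maximum pairwise distance (and thereby the Kullback-Leibler divergences between the induced distributions), I would then localize using the upper covering bound: cover $\Theta$ with at most $(c_1/r)^\alpha$ balls of some radius $r>\eps$ and invoke pigeonhole to extract a sub-packing $\{\theta_1^*,\ldots,\theta_M^*\}$ with $M\geq(c_0 r/(c_1\eps))^\alpha$ that lies inside a single such ball. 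These $M$ points are still $\eps$-separated but have diameter at most $2r$, so the definition of $A$ yields $\max_{i\neq j}D(\prob_{\theta_i^*}\|\prob_{\theta_j^*})\leq 4Ar^2$.

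Next I would apply the standard generalized Fano inequality to this localized packing: rounding any estimator $\hat\theta$ to its nearest $\theta_i^*$ gives a multi-way test, so
\[
\inf_{\hat\theta}\max_i\prob_{\theta_i^*}\bigl(\rho(\hat\theta,\theta_i^*)\geq\eps/2\bigr)\geq 1-\frac{4Ar^2+\log 2}{\log M},
\]
where the event on the left is forced by the packing property. Markov's inequality then gives
\[
\inf_{\hat\theta}\sup_{\theta\in\Theta}\expect_\theta\{\rho^2(\hat\theta,\theta)\}\geq\frac{\eps^2}{4}\left(1-\frac{4Ar^2+\log 2}{\log M}\right).
\]
Writing $r=t\eps$ with $t>c_1/c_0$ so that $\log M\geq\alpha\log(c_0 t/c_1)>0$, I would choose $t$ to be a sufficiently large absolute multiple of $c_1/c_0$ to dominate the $\log 2$ term, and set $\eps^2$ proportional to $\alpha\log(c_0 t/c_1)/(At^2)$ so that $4Ar^2$ is only a small fraction of $\log M$; this forces the parenthetical factor to exceed an absolute constant and yields the lower bound $\gtrsim\alpha/A$. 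The $\eps_0^2$ truncation enters because the covering hypothesis is valid only on $(0,\eps_0)$: when $\alpha/A$ is so large that the optimal $\eps$ would force $r=t\eps$ past $\eps_0$, we instead set $\eps=\eps_0/t$ and obtain $\gtrsim\eps_0^2$; the minimum of the two regimes is the stated bound.

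The main technical obstacle is the joint choice of $\eps$ and $t$ so as to simultaneously (i) keep $\log M$ bounded away from the information quantity $4At^2\eps^2+\log 2$ by a definite fraction, (ii) respect $r\leq\eps_0$, and (iii) track the explicit constants to reach the stated $c_0^2/(840 c_1^2)$ factor, which is a routine but finicky numerical optimization. The conceptually essential step is the pigeonhole localization, which trades a $(r/c_1)^\alpha$ factor in the packing size for a dramatic reduction of the diameter from $\mathrm{diam}(\Theta)$ down to $2r$; this trade is what keeps the Fano bound sharp and is the only place where both $c_0$ and $c_1$ simultaneously enter the final constant.
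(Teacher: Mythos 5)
This lemma is not proved in the paper at all: it is imported verbatim as Proposition 3 of \citet{cai2013sparse}, so there is no in-paper proof to compare against. Your reconstruction is the standard (and, as far as I recall, essentially the original) argument for such metric-entropy Fano bounds: maximal packing from the covering lower bound, pigeonhole localization inside a single ball of the covering upper bound to control the KL diameter by $4Ar^2$, Fano plus Markov, and a two-parameter optimization with the $\eps_0$ truncation handling the boundary of validity of the entropy hypothesis. All the individual steps check out (a maximal $\eps$-packing indeed has cardinality at least $\calN(\eps,\Theta,\rho)$; misclassification of the rounded test forces $\rho(\hat\theta,\theta_i^*)\geq\eps/2$; the localization requires $r=t\eps<\eps_0$, which your truncation respects). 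The one unverified piece is the explicit constant: a quick optimization of your parametrization ($r=(c_1/c_0)e^{k}\eps$, $4Ar^2$ a fraction $\beta$ of $\log M\geq\alpha k$, with the $\log 2/\log M$ penalty controlled only by $\alpha\geq 1$) appears to top out around $3\times 10^{-4}\,c_0^2/c_1^2$ rather than $1/840\approx 1.2\times 10^{-3}$ times $c_0^2/c_1^2$, so reaching the stated constant likely requires a sharper Fano variant (e.g.\ bounding the mutual information by the average rather than the maximal KL) or a different split of the error budget. Since the present paper only uses the lemma through a $\gtrsim$ in Theorem \ref{thm:minimax_LB}, any absolute constant suffices for its purposes, so this is a cosmetic rather than substantive gap.
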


We now proceed to the proof of Theorem \ref{thm:minimax_LB}. 
\begin{proof}[of Theorem \ref{thm:minimax_LB}]
Consider the following subset of latent positions:
\begin{align*}
\widetilde{\Theta}_n = \left\{\bX = [\bx_1,\ldots,\bx_n]\transpose\in\calX^n:\sqrt{\frac{1}{4d}}\leq x_{i1}\leq \sqrt{\frac{3}{4d}}, x_{i2} = \ldots = x_{id} = \sqrt{\frac{1}{4d}}, i \in [n]\right\}.
\end{align*}
Let $\bX_1, \bX_2\in\widetilde\Theta_n$, and let $\bu_1,\bu_2$ be the first columns of $\bX_1$ and $\bX_2$, respectively. Clearly, 
\begin{align*}
\|\bX_1\bX_1\transpose - \bX_2\bX_2\transpose\|_{\mathrm{F}}^2 
= \sum_{i = 1}^n\sum_{j = 1}^n(x_{1i1}x_{1j1} - x_{2i1}x_{2j1})^2 = \|\bu_1\bu_1\transpose - \bu_2\bu_2\transpose\|_{\mathrm{F}}^2.
\end{align*}
Since $\bX_1, \bX_2\in\widetilde\Theta_n$, it follows that $\sqrt{n/(4d)}\leq \|\bu_1\|_2,\|\bu_2\|_2\leq \sqrt{3n/(4d)}$. Applying Corollary \ref{corr:rank_one_corr} yields
\[
\|\bu_1 - \bu_2\|_2\leq (14\sqrt{d})\frac{\|\bX_1\bX_1\transpose - \bX_2\bX_2\transpose\|_2}{\sqrt{n}} + \frac{(6n/d)\|\bX_1\bX_1\transpose - \bX_2\bX_2\transpose\|_2}{(n/4d)^{3/2}}\leq\frac{62\sqrt{d}\|\bX_1\bX_1\transpose - \bX_2\bX_2\transpose\|_2}{\sqrt{n}}.
\]
Let $\rho:\widetilde\Theta_n\times\widetilde\Theta_n\to[0,\infty)$ be defined by $\rho(\bX_1,\bX_2) = (1/n)\|\bX_1\bX_1\transpose - \bX_2\bX_2\transpose\|_{\mathrm{F}}$.
It follows that there exists some $\eps_0> 0$, such that for all $\eps\in(0, \eps_0)$,
\begin{align*}
\calN(\eps, \widetilde\Theta_n, \rho)\geq \calN\left(\eps, \left[\sqrt{\frac{1}{4d}},\sqrt{\frac{3}{4d}}\right]^{n}, \frac{1}{62\sqrt{nd}}\|\cdot\|_2\right) = \calN\left(62\sqrt{nd}\eps, \left[\sqrt{\frac{1}{4d}},\sqrt{\frac{3}{4d}}\right]^{n},  \|\cdot\|_2\right).
\end{align*}
For any $\Theta\subset\mathbb{R}^n$, a standard volume comparison argument yields
\[
\left(\frac{1}{\eps}\right)^n\frac{\mathrm{vol}(\Theta)}{\mathrm{vol}(B_1^n)}\leq \calN(\eps,\Theta, \|\cdot\|_2).
\]
Hence,
\[
\calN\left(62\sqrt{nd}\eps, \left[\sqrt{\frac{1}{4d}},\sqrt{\frac{3}{4d}}\right]^{n},  \|\cdot\|_2\right)\geq \left(\frac{1}{62\sqrt{nd}\eps}\right)^n\left(\frac{\sqrt{3} - 1}{2\sqrt{d}}\right)^n\frac{\Gamma(n/2 + 1)}{\pi^{n/2}}.
\]
Observe that by Stirling's formula, for sufficiently large $n$, it holds that
\begin{align}\label{eqn:volume_nball_Stirling}
\left(\frac{n}{2\mathrm{e}}\right)^{n/2}\geq\frac{\Gamma\left(n/2 + 1\right)}{\pi^{n/2}}\geq\left(\frac{n}{2\pi\mathrm{e}}\right)^{n/2}.
\end{align}
Thus, we obtain the following lower bound for the covering number of $\widetilde\Theta_n$:
\[
\calN(\eps, \Theta_n, \rho)\geq\left\{\frac{(\sqrt{3} - 1)}{124\sqrt{2\pi\mathrm{e}}\sqrt{d}\eps}\right\}^n.
\]
We proceed to derive an upper bound for the covering number. Note that
\[
\|\bX_1\bX_1\transpose - \bX_2\bX_2\transpose\|_{\mathrm{F}} = \|\bu_1\bu_1\transpose - \bu_2\bu_2\transpose\|_{\mathrm{F}}\leq \|\bu_1(\bu_1 - \bu_2)\transpose\|_{\mathrm{F}} + \|(\bu_1 - \bu_2)\bu_2\transpose\|_{\mathrm{F}}\leq \sqrt{\frac{3n}{d}}\|\bu_1 - \bu_2\|_2.
\]
This further implies that
\begin{align*}
\calN(\eps, \widetilde\Theta_n, \rho)&
\leq 
\calN\left(\sqrt{\frac{nd}{3}}\eps, \left[\sqrt{\frac{1}{4d}},\sqrt{\frac{3}{4d}}\right]^{n},  \|\cdot\|_2\right)
\leq \left\{\frac{4\sqrt{3}(\sqrt{3} - 1)}{\sqrt{2\mathrm{e}}\sqrt{d}\eps}\right\}^n
\end{align*}
by a simple volume comparison argument. Hence, we obtain the following estimate of the covering number:
\begin{align}
\label{eqn:covering_number_RDPG}
\left(\frac{c_0}{\sqrt{d}\eps}\right)^n\leq \calN(\eps, \widetilde\Theta_n, \rho)\leq \left(\frac{c_1}{\sqrt{d}\eps}\right)^n
\end{align}
for some constants $0 < c_0 < c_1 < \infty$. It remains to derive 
\[A = \sup_{\bX_1,\bX_2\in\widetilde\Theta_n}\{D(\prob_{\bX_1}||\prob_{\bX_2})/\rho^2(\bX_1,\bX_2)\}.\]
For any $\bX_1,\bX_2\in\widetilde\Theta_n$, write
\begin{align*}
&\frac{D(\prob_{\bX_1}||\prob_{\bX_2})}{\rho^2(\bX_1,\bX_2)}\\
&\quad = \frac{n^2}{\|\bX_1\bX_1\transpose - \bX_2\bX_2\transpose\|_{\mathrm{F}}^2}\sum_{i = 1}^{n-1}\sum_{j = i + 1}^{n}\left\{(u_{1i}u_{1j})\log\left(\frac{u_{1i}u_{1j}}{u_{2i}u_{2j}}\right) + (1 - u_{1i}u_{1j})\log\left(\frac{1 - u_{1i}u_{1j}}{1 - u_{2i}u_{2j}}\right)\right\}\\
&\quad\leq \frac{n^2}{\|\bX_1\bX_1\transpose - \bX_2\bX_2\transpose\|_{\mathrm{F}}^2}\sum_{i = 1}^{n-1}\sum_{j = i + 1}^{n}\left\{(u_{1i}u_{1j})\left(\frac{u_{1i}u_{1j}}{u_{2i}u_{2j}} - 1\right) + (1 - u_{1i}u_{1j})\left(\frac{1 - u_{1i}u_{1j}}{1 - u_{2i}u_{2j}} - 1\right)\right\}\\
&\quad\leq \frac{n^2}{\|\bX_1\bX_1\transpose - \bX_2\bX_2\transpose\|_{\mathrm{F}}^2}\sum_{i = 1}^{n}\sum_{j = 1}^{n}\frac{\left(u_{1i}u_{1j} - u_{2i}u_{2j}\right)^2}{u_{2i}u_{2j}(1 - u_{2i}u_{2j})}\\
&\quad\leq \frac{16d^2n^2}{\|\bX_1\bX_1\transpose - \bX_2\bX_2\transpose\|_{\mathrm{F}}^2}\sum_{i = 1}^n\sum_{j = 1}^n\left(u_{1i}u_{1j} - u_{2i}u_{2j}\right)^2 = 16d^2n^2.
\end{align*}
Hence $A\leq 16d^2n^2$. Applying Lemma \ref{lemma:Fano_lemma} and the covering number estimate \eqref{eqn:covering_number_RDPG} yields that
\[
\inf_{\widehat\bX}\sup_{\bX\in\widetilde\Theta_n}\expect_\bX\left\{\frac{1}{n^2}\|\widehat\bX\widehat\bX\transpose - \bX\bX\transpose\|_{\mathrm{F}}^2\right\}\gtrsim\frac{1}{n}.
\]
Finally, observe that for any $\bW\in\mathbb{O}(d)$,
\begin{align*}
\|\widehat\bX\widehat\bX\transpose - \bX\bX\transpose\|_{\mathrm{F}}
&\leq \|(\widehat\bX - \bX\bW)\widehat\bX\transpose\|_{\mathrm{F}} + \|(\bX\bW)(\widehat\bX - \bX\bW)\transpose\|_{\mathrm{F}}\\
&\leq \|\widehat\bX - \bX\bW\|_{\mathrm{F}}(\|\widehat\bX\|_{\mathrm{F}} + \|\bX\|_{\mathrm{F}})\lesssim \sqrt{n}\|\widehat\bX - \bX\bW\|_{\mathrm{F}}
\end{align*}
by the assumption that $\|\widehat\bX\|_{\mathrm{F}}\lesssim \sqrt{n}$ with probability one. 
Namely, 
\[
\inf_{\bW\in\mathbb{O}(d)}\|\widehat\bX - \bX\bW\|_{\mathrm{F}}\gtrsim (1/\sqrt{n})\|\widehat\bX\widehat\bX\transpose - \bX\bX\transpose\|_{\mathrm{F}},\]
and this implies that
\[
\inf_{\widehat\bX}\sup_{\bX\in\widetilde\Theta_n}\expect_\bX\left\{\frac{1}{n}\inf_{\bW\in\mathbb{O}(d)}\|\widehat\bX - \bX\bW\|_{\mathrm{F}}^2\right\}\gtrsim \inf_{\widehat\bX}\sup_{\bX\in\widetilde\Theta_n}\expect_\bX\left\{\frac{1}{n^2}\|\widehat\bX\widehat\bX\transpose - \bX\bX\transpose\|_{\mathrm{F}}^2\right\}\gtrsim\frac{1}{n},
\]
completing the proof. 
\end{proof}


\section{Proofs for Section \ref{sec:optimal_bayesian_estimation}} 
\label{sec:proofs_for_section_sub:likelihood_based_bayesian_estimation}

The blueprint of the proof of Theorem \ref{thm:root_n_contraction_BRDPG} can be described as a ``prior-mass-and-testing'' technique originally presented in the seminal work \cite{ghosal2000convergence}. Roughly speaking, the ``prior-mass'' technique is to show the denominator
\[
D_n = \int_{\calX^n} \prod_{i\leq j}\frac{p(y_{ij}\mid \bX)}{p(y_{ij}\mid\bX_0)}\Pi(\mathrm{d}\bX)
\]
appearing in the posterior distribution can be bounded from below with large probability, and the ``testing'' technique requires the construction of suitable test functions. In what follows we formalize these steps.

\subsection{Bounding the denominator $D_n$ from below} 
\label{sub:bounding_the_denominator_}

\begin{lemma}\label{lemma:evidence_LB}
Let $\bY\sim\mathrm{RDPG}(\bX_0)$ for some $\bX_0\in\calX^n$.
Assume that $\delta\leq\min_{i,j}\bx_{0i}\transpose\bx_{0j}\leq \max_{i,j}\bx_{0i}\transpose\bx_{0j}\leq 1-\delta$ for some constant $\delta\in(0, 1/2)$ independent of $n$, and that $\pi_\bx$ is bounded away froom $0$ and $\infty$. Then for any constants $\beta,\gamma > 0$, and for sufficiently large $n$ and sufficiently small $\eps > 0$,
\begin{align*}
\prob_0\left[
D_n\leq\exp\left\{-\left(\frac{16\beta^2}{\delta^2} + c_\pi + d\log\beta\right)n - \gamma n\eps^2 - nd\left(\log \frac{1}{\eps}\right)\right\}
\right]\leq \exp\left(-\frac{\gamma^2\delta^2n^2\eps^2}{128\beta^2}\right)
\end{align*}
for some constant $c_\pi$ independent of $n$ and $d$. 
\end{lemma}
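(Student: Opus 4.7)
The plan is to prove this prior-mass lower bound using the standard prior-mass-and-testing technique of \cite{ghosal2000convergence}, specialized to the random dot product graph likelihood. I will restrict the integral defining $D_n$ to a small neighborhood $B_n \subset \calX^n$ of $\bX_0$ and then use Jensen's inequality to decompose $\log D_n$ into a prior-mass bound plus a bound on the averaged log-likelihood ratio. A natural candidate for the neighborhood is a product of sup-norm balls,
$$B_n = \{\bX \in \calX^n : \|\bx_i - \bx_{0i}\|_\infty \leq \eps/\beta \text{ for all } i \in [n]\},$$
with the radius $\eps/\beta$ calibrated so that, on $B_n$, (i) each edge probability $\bx_i\transpose\bx_j$ remains in $[\delta/2, 1-\delta/2]$ (which makes Bernoulli KL estimates well-behaved), and (ii) the prior-mass lower bound
$$\log \Pi(B_n) \geq -c_\pi n - nd\log(1/\eps) - nd\log\beta$$
follows from the assumption that $\pi_\bx$ is bounded away from zero on $\calX$ together with the volume of an $\ell_\infty$-ball in $\mathbb{R}^d$, with $d$-dependent constants absorbed into $c_\pi$.

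With $B_n$ in hand, Jensen's inequality gives
$$\log D_n \geq \log \Pi(B_n) + \frac{1}{\Pi(B_n)}\int_{B_n}\log\Lambda_n(\bX)\, d\Pi(\bX),\qquad \Lambda_n(\bX) = \prod_{i\leq j}\frac{p(y_{ij}|\bX)}{p(y_{ij}|\bX_0)}.$$
The $\prob_0$-expectation of the second term equals $-\bar K_n$, where $\bar K_n = \Pi(B_n)^{-1}\int_{B_n}K_n(\bX_0,\bX)\,d\Pi(\bX)$ is the averaged Kullback–Leibler divergence, with $K_n(\bX_0,\bX) = \sum_{i\leq j}K(\bx_{0i}\transpose\bx_{0j},\bx_i\transpose\bx_j)$. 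Using the Bernoulli KL estimate $K(p_0,p) \leq 4(p-p_0)^2/\delta$ valid for $p,p_0 \in [\delta/2,1-\delta/2]$, together with $|\bx_i\transpose\bx_j - \bx_{0i}\transpose\bx_{0j}| \leq 2\sqrt{d}\,\eps/\beta$ on $B_n$, and the ``sufficiently small $\eps$'' hypothesis (which effectively forces $n\eps^2$ to be controlled by $\beta^4/(d\delta)$), I will obtain $\bar K_n \leq 16\beta^2 n/\delta^2$, which accounts for the first block of constants in the claimed bound. For the deviation around the mean, the centered version of the averaged log-likelihood ratio can, via Fubini, be rewritten as a linear combination $\sum_{i\leq j}a_{ij}(y_{ij}-p_{0ij})$ of independent centered Bernoulli variables with coefficients uniformly bounded by a multiple of $\beta/\delta$. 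Hoeffding's inequality, applied with deviation threshold $\gamma n\eps^2$, then yields the probability bound $\exp(-\gamma^2\delta^2 n^2\eps^2/(128\beta^2))$.

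Combining the three bounds on the complement of the Hoeffding exceptional event reproduces the stated lower bound on $\log D_n$. The main technical hurdle will be the simultaneous calibration of the three interlocking parameters $(\eps,\beta,\delta)$ with the neighborhood scale so that the prior-mass estimate, the averaged-KL estimate, and the Hoeffding variance proxy all line up with the precise constants $16\beta^2/\delta^2$, $c_\pi$, $d\log\beta$, and $128\beta^2$ appearing in the statement; in particular, the interpretation of the ``sufficiently small $\eps$'' clause, the absorption of $d$-dependent ball-volume constants into $c_\pi$, and the justification of the Fubini exchange in the concentration step are all delicate bookkeeping steps that must be executed carefully.
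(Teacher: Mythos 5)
Your overall architecture is the same one the paper uses: restrict $D_n$ to a sup-type neighborhood of $\bX_0$, trade a factor of $\Pi(\text{neighborhood})$ for a Jensen bound, control the deterministic drift, and use Hoeffding on a linear-in-$\by$ remainder. The one stylistic difference is that the paper replaces the log-likelihood ratio pointwise using $\log x \le x-1$ and then applies Jensen to $\exp(\cdot)$, yielding an explicitly linear-in-$y_{ij}$ integrand $-(y_{ij}-\bP_{0ij})\frac{\bP_{0ij}-\bP_{ij}}{\bP_{ij}(1-\bP_{ij})} - \frac{(\bP_{0ij}-\bP_{ij})^2}{\bP_{ij}(1-\bP_{ij})}$, whereas you apply Jensen to $\log$ directly and then invoke a Bernoulli KL bound. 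Both routes are legitimate in principle, and the paper's choice is essentially a way to bypass the KL bookkeeping.

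However, your Hoeffding step has a concrete gap that would derail the proof. After Fubini, the coefficients $a_{ij}$ multiplying $(y_{ij}-p_{0ij})$ are \emph{not} uniformly bounded by a fixed multiple of $\beta/\delta$; each $a_{ij}$ is an average of log-odds differences over $B_n$, and on a neighborhood of radius $O(\eps)$ these are $O(\eps/\delta)$ (the $\beta$ and $\sqrt{d}$ prefactors depend on your norm convention, but the $\eps$-scaling is unavoidable). Compare the paper's explicit bound $|(\bP_{ij}-\bP_{0ij})/\{\bP_{ij}(1-\bP_{ij})\}| \le 8\beta\eps/\delta^2$, which has the $\eps$ in it. Moreover, the deviation threshold must be of order $n^2\eps^2$, not $n\eps^2$: with $\sim n^2$ terms each of range $O(\eps/\delta)$ the sum-of-squared-ranges is $\asymp n^2\eps^2/\delta^2$, and only a threshold $t = \gamma n^2\eps^2$ produces $\exp(-t^2/(\text{sum})) = \exp(-c\,\gamma^2\delta^2 n^2\eps^2)$. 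With your stated threshold $\gamma n\eps^2$, Hoeffding gives a bound of the form $\exp(-c\,\gamma^2\eps^2\delta^2)$, which does not even tend to zero, let alone match the stated $\exp(-\gamma^2\delta^2 n^2\eps^2/(128\beta^2))$. (The $\gamma n\eps^2$ in the lemma's display appears to be a typo; the paper's own proof carries $\gamma n^2\eps^2$ throughout and also ends with $\delta^4$ rather than $\delta^2$ in the tail bound.) Your ad hoc constraint $n\eps^2 \lesssim \beta^4/(d\delta)$ is a symptom of the same miscalibration and is not part of the lemma's hypotheses. To repair the argument, bound the averaged coefficients explicitly by a constant times $\eps/\delta$ (or $\beta\eps/\delta^2$ with the paper's neighborhood), take the deviation threshold to be $\gamma n^2\eps^2$, and then the constants in the exponent align.
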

\begin{proof}
For any constant $\beta > 0$, set $\calE_n = \left\{\bX:\|\bX - \bX_0\|_{2\to\infty} < {\beta}\eps\right\}$. Denote $\bP = \bX\bX\transpose = [\bP_{ij}]_{n\times n}$ and $\bP_0 = \bX_0\bX_0\transpose = [\bP_{0ij}]_{n\times n}$. 
Write
\begin{align*}
D_n & \geq \Pi(\calE_n)\int_{\calE_n}\prod_{i\leq j}\frac{p(y_{ij}\mid\bX)}{p(y_{ij}\mid\bX_0)}\Pi(\mathrm{d}\bX\mid\calE_n)\\
& = \Pi(\calE_n)\int_{\calE_n}\exp\left[
\sum_{i = 1}^n\sum_{j = i}^n\left\{-y_{ij}\log\left(\frac{\bP_{0ij}}{\bP_{ij}}\right) - (1 - y_{ij})\log\left(\frac{1 - \bP_{0ij}}{1 - \bP_{ij}}\right)
\right\}\right]\Pi(\mathrm{d}\bX\mid\calE_n)\\
&\geq \Pi(\calE_n)\int_{\calE_n}\exp\left[
\sum_{i = 1}^n\sum_{j = i}^n\left\{-y_{ij}\left(\frac{\bP_{0ij} - \bP_{ij}}{\bP_{ij}}\right) -  (1 - y_{ij})\left(\frac{\bP_{ij} - \bP_{0ij}}{1 - \bP_{ij}}\right)
\right\}\right]\Pi(\mathrm{d}\bX\mid\calE_n)\\
&= \Pi(\calE_n)\int_{\calE_n}\exp\left[-
\sum_{i = 1}^n\sum_{j = i}^n\left\{(y_{ij} - \bP_{0ij})\left(\frac{\bP_{0ij} - \bP_{ij}}{\bP_{ij}(1 - \bP_{ij})}\right) + \frac{\left(\bP_{0ij} - \bP_{ij}\right)^2}{\bP_{ij}(1 - \bP_{ij})}
\right\}\right]\Pi(\mathrm{d}\bX\mid\calE_n)\\
&\geq \Pi(\calE_n)\exp\left\{-
\sum_{i = 1}^n\sum_{j = i}^n(y_{ij} - \bP_{0ij})\int_{\calE_n}\left(\frac{\bP_{ij} - \bP_{0ij}}{\bP_{ij}(1 - \bP_{ij})}\right)\Pi(\mathrm{d}\bX\mid\calE_n)
\right\}\\
&\quad\times\exp\left\{-\sum_{i = 1}^n\sum_{j = i}^n\int_{\calE_n}\frac{\left(\bP_{0ij} - \bP_{ij}\right)^2}{\bP_{ij}(1 - \bP_{ij})}\Pi(\mathrm{d}\bX\mid\calE_n)\right\},
\end{align*}
where the third line follows from the fact that $\log x\leq (x - 1)$ for all $x > 0$, and the last inequality is due to Jensen's inequality. 
Since for any $\bX\in\calE_n$, we have, for any $i,j\in[n]$,
\begin{align*}
|\bP_{ij} - \bP_{0ij}|&\leq |(\bx_i - \bx_{0i})\transpose\bx_j| + |\bx_{0i}\transpose(\bx_j - \bx_{0j})|\leq (\|\bx_j\|_2 + \|\bx_{0i}\|_2)\|\bX - \bX_0\|_{2\to\infty}\leq \frac{\delta}{2},\\
\bP_{ij}(1 - \bP_{ij})&\geq (\bx_{0i}\transpose\bx_{0j} - |\bP_{ij} - \bP_{0ij}|)(1 - \bx_{0i}\transpose\bx_{0j} - |\bP_{ij} - \bP_{0ij}|)\geq\frac{\delta^2}{4},
\\
\left|\frac{\bP_{ij} - \bP_{0ij}}{\bP_{ij}(1 - \bP_{ij})}\right|
&\leq \max_{i,j\in[n]}\frac{|(\bx_i - \bx_{0i})\transpose\bx_j| + |\bx_{0i}\transpose(\bx_j - \bx_{0j})|}{\bP_{ij}(1 - \bP_{ij})}
\leq \frac{2\|\bX - \bX_0\|_{2\to\infty}}{\delta^2/4}\leq \frac{8\beta\eps}{\delta^2},\\
\frac{(\bP_{ij} - \bP_{0ij})^2}{\bP_{ij}(1 - \bP_{ij})}
&\leq \max_{i,j\in[n]}\frac{2|(\bx_i - \bx_{0i})\transpose\bx_j|^2 + 2|\bx_{0i}\transpose(\bx_j - \bx_{0j})|^2}{\bP_{ij}(1 - \bP_{ij})}
\leq \frac{4\|\bX - \bX_0\|_{2\to\infty}^2}{\delta^2/4}\leq \frac{16\beta^2\eps^2}{\delta^2},
\end{align*}
implying that
\begin{align*}
\left|\int_{\calE_n}\left(\frac{\bP_{ij} - \bP_{0ij}}{\bP_{ij}(1 - \bP_{ij})}\right)\Pi(\mathrm{d}\bX\mid\calE_n)\right|
\leq \frac{8\beta\eps}{\delta^2},\quad
\int_{\calE_n}\frac{(\bP_{ij} - \bP_{0ij})^2}{\bP_{ij}(1 - \bP_{ij})}\Pi(\mathrm{d}\bX\mid\calE_n)
\leq \frac{16\beta^2\eps^2}{\delta^2},
\end{align*}
it follows that for any $\gamma > 0$,
\begin{align*}
&\left\{D_n\leq\Pi(\calE_n)\exp\left\{-\frac{n(n + 1)}{2}\left(\frac{16\beta^2\eps^2}{\delta^2}\right) - \gamma n^2\eps^2\right\}\right\}\\
&\quad\subset
\left\{
\sum_{i = 1}^n\sum_{j = i}^n(y_{ij} - \bP_{0ij})\int_{\calE_n}\left(\frac{\bP_{ij} - \bP_{0ij}}{\bP_{0ij}(1 - \bP_{0ij})}\right)\Pi(\mathrm{d}\bX\mid\calE_n) > \gamma n^2\eps^2
\right\}.
\end{align*}
Hence, by Hoeffding's inequality,
\begin{align*}
&\prob_0\left[D_n\leq\Pi(\calE_n)\exp\left\{-\frac{n(n + 1)}{2}\left(\frac{16\beta^2\eps^2}{\delta^2}\right) - \gamma n^2\eps^2\right\}\right]\\
&\quad\leq \prob_0\left\{
\sum_{i = 1}^n\sum_{j = i}^n(y_{ij} - \bP_{0ij})\int_{\calE_n}\left(\frac{\bP_{ij} - \bP_{0ij}}{\bP_{0ij}(1 - \bP_{0ij})}\right)\Pi(\mathrm{d}\bX\mid\calE_n) > \gamma n^2\eps^2
\right\}\\
&\quad\leq\exp\left\{-\frac{4\gamma^2 n^4\eps^4}{n(n + 1)}\left(\frac{\delta^2}{16\beta\eps}\right)^2\right\} = \exp\left(-\frac{\gamma^2\delta^4n^2\eps^2}{128\beta^2}\right).
\end{align*}
It suffices to provide an exponential lower bound for $\Pi(\calE_n)$. This can be easily obtained using the fact that $\pi_\bx(\bx_i)\mathrm{vol}(B_1^d)\geq \exp(c_\pi) > 0$ for some constant $c_\pi$: for sufficiently small $\eps$, $\{\|\bx_i - \bx_{0i}\|_2 < \beta\eps\}\subset\calX$, and thus,
\begin{align}
\Pi(\calE_n)& = \prod_{i = 1}^n\int_{\{\|\bx_i - \bx_{0i}\|_2 < \beta/\sqrt{n}\}}\pi_\bx(\bx_i)\mathrm{d}\bx_i
 \geq \prod_{i = 1}^n\left\{\exp(c_\pi)\frac{\mathrm{vol}\left(\{\bx:\|\bx - \bx_{0i}\|_2 < \beta\eps\}\right)}{\mathrm{vol}(B_1^d)}\right\}\nonumber\\
 \label{eqn:prior_concentration}
& = \exp\left(nc_\pi\right)\left({\beta}\eps\right)^{nd}
 = \exp\left\{n(c_\pi + d\log\beta) - nd\left(\log\frac{1}{\eps}\right)\right\}.
\end{align}
Namely, for any $\beta,\gamma > 0$, we obtain the following conclusion,
\begin{align*}
\prob_0\left[
D_n\leq\exp\left\{-\left(\frac{16\beta^2}{\delta^2} + c_\pi + d\log\beta\right) n -\gamma n^2\eps^2 - nd\left(\log \frac{1}{\eps}\right)\right\}
\right]\leq \exp\left(-\frac{\gamma^2\delta^4n^2\eps^2}{128\beta^2}\right),
\end{align*}
where $c_\pi$ is some constant depending independent of $d$ and $n$. The proof is thus completed.
\end{proof}

\subsection{Construction of test functions} 
\label{sub:construction_of_test_functions}


\begin{lemma}\label{lemma:entropy_prior_mass}
Let $M > 0$, consider the pseudo-metric $\rho(\bX,\bX_0) = \|\bX\bX\transpose - \bX_0\bX_0\transpose\|_{\mathrm{F}}/n$, and take 
$\Theta_n = \left\{\bX\in\calX^n:\rho(\bX, \bX_0)\leq M\sqrt{{(d\log n)}{n}}\right\}$.
Assume that $\sigma_d(\bX_0)\geq\sigma_0\sqrt{n/d}$ for some constant $\sigma_0 > 0$ that is independent of $n$ and $d$. 
If $(d^4\log n)/ n\to0$ and $\pi_\bx$ is bounded away from $0$ and $\infty$, then the following inequalities hold for sufficiently large $n$:
\begin{align*}
\calN\left(\frac{\eps}{4}, \left\{\bX\in\Theta_n:\rho(\bX,\bX_0) < \eps\right\}, \rho\right)
&\leq \left(\frac{3}{\eps}\right)^{d^2}\left\{
24\sqrt{d}\left(1 + \frac{16}{\sigma_0} + \frac{8}{\sigma_0^2}\right)\right\}^{n d}
\end{align*}
for all $\eps > 0$, and
\begin{align*}
\Pi\left(\bX\in\Theta_n:\rho(\bX,\bX_0)\leq 2j\eps\right)
&\leq\left(\frac{3}{\eps}\right)^{d^2}\exp\left\{(C_\pi - \log\mathrm{vol}(B_1^d))n\right\}\left(\sqrt{2\pi\mathrm{e}}Cj\eps\right)^{nd}
\end{align*}
for all sufficiently small $\eps > 0$.
\end{lemma}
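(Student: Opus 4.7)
The plan is to reduce both bounds to Frobenius-norm estimates on $\bX$ itself via the matrix decomposition of Lemma \ref{lemma:Decomposition}. The first step I would carry out is a Davis-Kahan-type ``lifting'': if $\bX \in \Theta_n$ satisfies $\rho(\bX, \bX_0) \leq \eta$ for $\eta$ small enough, then there exists $\bW = \bW(\bX)\in\mathbb{O}(d)$ with $\|\bX - \bX_0\bW\|_{\mathrm{F}} \leq C_1 \sqrt{nd}\,\eta$, where $C_1$ depends only on $\sigma_0$. The ingredients are: (i) by Weyl's inequality applied to $\bP = \bX\bX\transpose$ and $\bP_0 = \bX_0\bX_0\transpose$, the hypothesis $\sigma_d(\bX_0)\geq \sigma_0\sqrt{n/d}$ transfers to $\sigma_d(\bX)\geq \sigma_0\sqrt{n/(2d)}$ once $\eta$ is small; (ii) each of the six summands in the decomposition of $\bX\bV - \bX_0\bV_0\bW_\bU$ in Lemma \ref{lemma:Decomposition} has Frobenius norm bounded by $\|\bP - \bP_0\|_{\mathrm{F}}/\sigma_d(\bX_0)$ times an explicit factor involving $\sigma_0$. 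Tracking these factors is what produces the constant $1 + 16/\sigma_0 + 8/\sigma_0^2$.

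For the covering-number bound, I would construct a standard $\eps$-net of $\mathbb{O}(d)$ in operator norm of cardinality at most $(3/\eps)^{d^2}$. For each $\bW$ in this net, the set of $\bX\in\Theta_n$ with $\rho(\bX,\bX_0)<\eps$ sits, after replacing the optimal $\bW(\bX)$ by its net representative (using $\|\bX_0(\bW(\bX) - \bW)\|_{\mathrm{F}}\leq \sqrt{n}\,\eps$ since $\|\bX_0\|_{\mathrm{F}}\leq \sqrt n$), inside a Frobenius ball around $\bX_0\bW$ of radius $R \lesssim C_1\sqrt{nd}\,\eps$. I then cover this ball with Frobenius balls of radius $\sqrt{n}\eps/8$, which costs at most $(3R/(\sqrt n\eps/8))^{nd} = (24 C_1\sqrt d)^{nd}$ points. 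Since both competitors lie in $\calX^n$ and hence have operator norm at most $\sqrt n$, Frobenius distance $\sqrt n\eps/8$ implies $\rho$-distance at most $\eps/4$ via $\|\bX_1\bX_1\transpose - \bX_2\bX_2\transpose\|_{\mathrm{F}}\leq 2\sqrt n\|\bX_1-\bX_2\|_{\mathrm{F}}$. Multiplying the two covering cardinalities matches the stated bound.

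For the prior-mass bound, the same $\eps$-net embeds $\{\bX: \rho(\bX,\bX_0)\leq 2j\eps\}$ into the union over the $(3/\eps)^{d^2}$ net points of Frobenius balls around $\bX_0\bW$ of radius $R' \lesssim C_1 j\sqrt{nd}\,\eps$. Because $\pi_\bx$ is bounded above on $\calX$, there exists a constant $C_\pi$ with $\pi_\bx \leq \exp(C_\pi)/\mathrm{vol}(B_1^d)$ on $\calX$, so the product density on $\calX^n$ is dominated by $\exp(nC_\pi)/\mathrm{vol}(B_1^d)^n$. The prior mass of each Frobenius ball of radius $R'$ is therefore at most this bound times the Euclidean volume $\mathrm{vol}(B_{R'}^{nd}) = R'^{nd}\pi^{nd/2}/\Gamma(nd/2+1)$, which Stirling's formula (used in the same form as in \eqref{eqn:volume_nball_Stirling}) reduces to at most $(\sqrt{2\pi\mathrm e}\,R'/\sqrt{nd})^{nd} = (\sqrt{2\pi\mathrm e}\,C_1 j\eps)^{nd}$. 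Summing over the net gives the advertised bound with constant $C = C_1$.

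The main obstacle will be the Davis-Kahan lifting step: one must verify carefully that every residual term $\bR_1\bS^{1/2}$, $\bU_0\bR_2$, and $(\eye-\bU_0\bU_0\transpose)(\bP-\bP_0)\bR_3\bS^{-1/2}$ in Lemma \ref{lemma:Decomposition} is truly of order $\|\bP-\bP_0\|_{\mathrm{F}}/\sigma_d(\bX_0)$, which requires bounding $\|\bR_1\|_{\mathrm{F}}, \|\bR_3\|_{\mathrm{F}}$ via a sin-$\Theta$ estimate $\lesssim \|\bP-\bP_0\|_{\mathrm{F}}/\sigma_d(\bX_0)^2$ and then absorbing the extra $\sigma_d(\bX)$ factor coming from $\bS^{1/2}$. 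This is precisely where the numerical constants $16/\sigma_0$ and $8/\sigma_0^2$ enter. A smaller bookkeeping point is that the net does not have to lie inside $\calX^n$ since we only need the target set to be covered, not the net to be contained in it.
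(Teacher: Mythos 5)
Your proposal reproduces the paper's argument essentially verbatim: the same Weyl/Davis--Kahan lifting via Lemma~\ref{lemma:Decomposition} to pass from $\rho$-distance to a Frobenius bound $\inf_\bW\|\bX-\bX_0\bW\|_{\mathrm{F}}\lesssim\sqrt{nd}\,\rho(\bX,\bX_0)$, the same two-stage cover (an $\mathbb{O}(d)$-net of cardinality $(3/\eps)^{d^2}$ followed by Frobenius balls in $\mathbb{R}^{nd}$), and the same density-times-volume estimate with Stirling for the prior mass. The only cosmetic divergences are whether the orthogonal-group net is taken in operator or Frobenius norm and exactly which term contributes the ``$+1$'' in $1+16/\sigma_0+8/\sigma_0^2$; neither changes the substance.
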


\begin{proof}
Denote $\calF = \{\bX\in\Theta_n:\rho(\bX,\bX_0) < \eps\}$. We first show that for any $\bX\in\Theta_n$, $\sigma_d(\bX) \geq \sigma_0\sqrt{n/d}/2$ for sufficiently large $n$. For any $\bX\in\calF$, by the Weyl's inequality, we have, for sufficiently large $n$, 
\begin{align*}
|\sigma_d(\bX) - \sigma_d(\bX_0)|
&=\frac{|\lambda_d(\bX\bX\transpose{}) - \lambda_d(\bX_0\bX_0\transpose{})|}{\sigma_d(\bX) + \sigma_d(\bX_0)}
\leq\sqrt{\frac{d}{\sigma_0^2n}}\|\bX\bX\transpose - \bX_0\bX_0\transpose\|_{\mathrm{F}}
\\&
=\sqrt{\frac{nd}{\sigma_0^2}}\rho(\bX, \bX_0)\leq\frac{\sigma_0}{2}\sqrt{\frac{n}{d}},
 \end{align*}
and hence,
$\sigma_d(\bX) \geq \sigma_d(\bX_0) - |\sigma_d(\bX) - \sigma_d(\bX_0)|\geq {\sigma_0}\sqrt{{n}/{d}}/2$.

Let $\bX_1,\bX_2\in\Theta_n$, and let them yield singular value decompositions $\bX_1 = \bU_1\bS_1^{1/2}\bV_1\transpose$ and $\bX_2 = \bU_2\bS_2^{1/2}\bV_2\transpose$, where $\bU_1,\bU_2\in\mathbb{O}(n, d)$ and $\bV_1,\bV_2\in\mathbb{O}(d)$. Further let $\bU_2\transpose\bU_1 = \bW_1\bSigma\bW_2\transpose$ be the singular value decomposition of $\bU_2\transpose\bU_1$, and let $\bW_\bU = \bW_1\bW_2\transpose$. 
Denote $\bP_1 = \bX_1\bX_1\transpose$ and $\bP_2 = \bX_2\bX_2\transpose$. Then by Lemma \ref{lemma:Decomposition} and the fact that $\|\bA\bB\|_{\mathrm{F}} \leq \|\bA\|_{\mathrm{F}}\|\bB\|_2$, we have,
\begin{align*}
\|\bX_1\bV_1 - \bX_2\bV_2\bW_\bU\|_{\mathrm{F}}
&\leq \|\bS_2^{-1/2}\|_2\|\bP_1 - \bP_2\|_{\mathrm{F}} + 
\|\bP_1 - \bP_2\|_{\mathrm{F}}(\|\bS_1^{-1/2}\|_2 + \|\bS_2^{-1/2}\|)
\\&\quad
 + \|\bP_1 - \bP_2\|_{\mathrm{F}}\|\bS_1^{-1/2}\|_2
 + 2\|\bP_1 - \bP_2\|_{\mathrm{F}}\|\bS_1^{-1/2}\|_2 + \|\sin\Theta(\bU_1,\bU_2)\|_2^2\|\bS_1^{1/2}\|_{\mathrm{F}}
 \\&\quad
  + \|\bW_\bU\bS_1^{1/2} - \bS_2^{1/2}\bW_\bU\|_{\mathrm{F}}\\
&\leq \frac{12}{\sigma_0}\sqrt{\frac{d}{n}}\|\bP_1 - \bP_2\|_{\mathrm{F}} + \sqrt{n}\|\sin\Theta(\bU_1,\bU_2)\|_2^2 + \|\bW_\bU\bS_1^{1/2} - \bS_2^{1/2}\bW_\bU\|_{\mathrm{F}}.
\end{align*}
By Davis-Kahan theorem, we have, for sufficiently large $n$,
\begin{align*}
\|\sin\Theta(\bU_1,\bU_2)\|_2
&\leq
 \frac{8d\|\bP_1 - \bP_2\|_{\mathrm{F}}}{\sigma_0^2n}\leq\frac{8d}{\sigma_0^2n}\left\{
n\rho(\bX_1,\bX_0) + n\rho(\bX_2,\bX_0)
\right\}
\leq \frac{8M}{\sigma_0^2}\sqrt{\frac{d^3\log n}{n}}\leq\frac{1}{\sqrt{d}},
\end{align*}
as we are assuming that $(d^4\log n)/n\to 0$. 
Therefore,
\begin{align*}
\sqrt{n}\|\sin\Theta(\bU_1,\bU_2)\|_2^2\leq \frac{4d}{\sigma_0^2\sqrt{n}}\|\sin\Theta(\bU_1,\bU_2)\|_2\|\bP_1 - \bP_2\|_{\mathrm{F}}\leq \frac{4}{\sigma_0^2}\sqrt{\frac{d}{n}}\|\bP_1 - \bP_2\|_{\mathrm{F}}.
\end{align*}
To tackle the last term $\|\bW_\bU\bS_1^{1/2} - \bS_2^{1/2}\bW_\bU\|_{\mathrm{F}}$, we adopt the technique applied in \cite{JMLR:v18:17-448} (see Lemma 2 there) and derive the following decomposition of $\bW_\bU\bS_1 - \bS_2 \bW_\bU$:
\begin{align*}
\bW_\bU\bS_1 - \bS_2 \bW_\bU & = (\bW_\bU - \bU_2\transpose\bU_1)\bS_1 + \bU_2\transpose\bU_1\bS_1 - \bS_2\bW_\bU\\
& = (\bW_\bU - \bU_2\transpose\bU_1)\bS_1 +\bU_2\transpose\bP_1\bU_1 - \bS_2\bW_\bU\\
& = (\bW_\bU - \bU_2\transpose\bU_1)\bS_1 +\bU_2\transpose(\bP_1 -\bP_2)\bU_1 + \bU_2\transpose\bP_2\bU_1 - \bS_2\bW_\bU\\
& = (\bW_\bU - \bU_2\transpose\bU_1)\bS_1 +\bU_2\transpose(\bP_1 -\bP_2)\bU_1 + \bS_2\bU_2\transpose\bU_1 - \bS_2\bW_\bU\\
& = (\bW_\bU - \bU_2\transpose\bU_1)\bS_1 +\bU_2\transpose(\bP_1 -\bP_2)\bU_1 + \bS_2(\bU_2\transpose\bU_1 - \bW_\bU).
\end{align*}
Since by Davis-Kahan theorem,
\[
\|\bW_\bU - \bU_2\transpose\bU_1\|_2\leq \|\sin\Theta(\bU_1,\bU_2)\|_2^2\leq \frac{8\sqrt{d}}{\sigma_0^2n}\|\bP_1 - \bP_2\|_{\mathrm{F}},\]
it follows that
\begin{align*}
\|\bW_\bU\bS_1 - \bS_2 \bW_\bU\|_{\mathrm{F}}
&\leq \|\bW_\bU - \bU_2\transpose\bU_1\|_2(\|\bS_1\|_{\mathrm{F}} + \|\bS_2\|_{\mathrm{F}}) + \|\bP_1 - \bP_2\|_{\mathrm{F}}\leq 2\|\bP_1 - \bP_2\|_{\mathrm{F}}.
\end{align*}
Observe that the $(i,j)$-th entry of $\bW_\bU\bS_1^{1/2} - \bS_2^{1/2} \bW_\bU$ can be written as
\[
(\bW_\bU)_{ij}\left\{\sqrt{\lambda_j(\bP_1)} - \sqrt{\lambda_i(\bP_2)}\right\} = \frac{(\bW_\bU)_{ij}\left\{(\lambda_j(\bP_1) - \lambda_i(\bP_2)\right\}}{\sqrt{\lambda_j(\bP_1)} + \sqrt{\lambda_i(\bP_2)}} = \frac{(\bW_\bU\bS_1 - \bS_2 \bW_\bU)_{ij}}{\sqrt{\lambda_j(\bP_1)} + \sqrt{\lambda_i(\bP_2)}},
\]
it follows that
\begin{align*}
\|\bW_\bU\bS_1^{1/2} - \bS_2^{1/2} \bW_\bU\|_{\mathrm{F}}\leq \frac{1}{\sigma_d(\bX_1)}\|\bW_\bU\bS_1 - \bS_2 \bW_\bU\|_{\mathrm{F}}\leq \frac{4}{\sigma_0}\sqrt{\frac{d}{n}}\|\bP_1 - \bP_2\|_{\mathrm{F}}.
\end{align*}
Combining the above results, we obtain, that for any $\bX_1,\bX_2\in\Theta_n$, 
\begin{align}\label{eqn:rho_eta_equivalence}
\|\bX_1 - \bX_2\bV_2\bW_\bU\bV_1\|_{\mathrm{F}}\leq \left(\frac{16}{\sigma_0} + \frac{8}{\sigma_0^2}\right)\sqrt{\frac{d}{n}} \|\bP_1 - \bP_2\|_{\mathrm{F}}.
\end{align}
Hence,
\begin{align*}
\calF &= \{\bX\in\Theta_n:\rho(\bX,\bX_0) < \eps\}\subset\left\{\bX\in\Theta_n:\inf_{\bW\in\mathbb{O}(d)}\|\bX - \bX_0\bW\|_{\mathrm{F}}\leq\left(\frac{16}{\sigma_0} + \frac{8}{\sigma_0^2}\right)\sqrt{nd}\eps\right\}\\
&=\bigcup_{\bW\in\mathbb{O}(d)}\left\{\bX\in\Theta_n:\|\bX - \bX_0\bW\|_{\mathrm{F}} < \left(\frac{16}{\sigma_0} + \frac{8}{\sigma_0^2}\right)\sqrt{nd}\eps\right\}.
\end{align*}
Now let $\widetilde\calO(\eps)$ be an $\eps/\sqrt{d}$-net of $(\mathbb{O}(d), \|\cdot\|_{\mathrm{F}})$. Since $\mathbb{O}(d)\subset\{\bA\in\mathbb{R}^{d\times d}:\|\bA\|_{\mathrm{F}} = 1\}$, it follows that $|\widetilde\calO(\eps)|\leq (3/\eps)^{d^2}$. Therefore, for any
\[
\bX \in \bigcup_{\bW\in\mathbb{O}(d)}\left\{\bX\in\Theta_n:\|\bX - \bX_0\bW\|_{\mathrm{F}} < \left(\frac{16}{\sigma_0} + \frac{8}{\sigma_0^2}\right)\sqrt{nd}\eps\right\},
\]
there exists some $\bW\in\mathbb{O}(d)$, and some $\bR\in\widetilde\calO(\eps)$, such that $\|\bW - \bR\|_{\mathrm{F}}\leq \eps$, and $\|\bX - \bX_0\bW\|_{\mathrm{F}} < \left({16}/{\sigma_0} + {8}/{\sigma_0^2}\right)\sqrt{nd}\eps$, implying that 
\[
\|\bX - \bX_0\bR\|_{\mathrm{F}}\leq \|\bX - \bX_0\bW\|_{\mathrm{F}} + \|\bX_0(\bW - \bR)\|_{\mathrm{F}} < \left(1 + \frac{16}{\sigma_0} + \frac{8}{\sigma_0^2}\right)\sqrt{nd}\eps.
\]
Hence,
\begin{align*}
\bX\in\bigcup_{\bR\in\widetilde\calO(\eps)}
\left\{\bX\in\Theta_n:\|\bX - \bX_0\bR\|_{\mathrm{F}} < \left(1 + \frac{16}{\sigma_0} + \frac{8}{\sigma_0^2}\right)\sqrt{nd}\eps\right\},
\end{align*}
and hence, by the fact that $\rho(\bX_1,\bX_2) \leq (2/\sqrt{n})\|\bX_1 - \bX_2\|_{\mathrm{F}}$,
\begin{align*}
\calN\left(\frac{\eps}{4}, \calF, \rho\right)
&\leq \sum_{\bR\in\widetilde\calO(\eps)}
\calN\left(\frac{\eps}{4}, \left\{\bX\in\Theta_n:\|\bX - \bX_0\bR\|_{\mathrm{F}} < \left(1 + \frac{16}{\sigma_0} + \frac{8}{\sigma_0^2}\right)\sqrt{nd}\eps\right\}, \rho\right)\\
&\leq\sum_{\bR\in\widetilde\calO(\eps)}
\calN\left(\frac{\eps}{4}, \left\{\bX\in\Theta_n:\|\bX - \bX_0\bR\|_{\mathrm{F}} < \left(1 + \frac{16}{\sigma_0} + \frac{8}{\sigma_0^2}\right)\sqrt{nd}\eps\right\}, \frac{2}{\sqrt{n}}\|\cdot\|_{\mathrm{F}}\right)\\
&\leq\left(\frac{3}{\eps}\right)^{d^2}\calN\left(\frac{\sqrt{n}\eps}{8}, \left\{\bX\in\mathbb{R}^{n\times d}:\|\bX - \bX_0\bR\|_{\mathrm{F}} < \left(1 + \frac{16}{\sigma_0} + \frac{8}{\sigma_0^2}\right)\sqrt{nd}\eps\right\}, \|\cdot\|_{\mathrm{F}}\right)\\
&\leq \left(\frac{3}{\eps}\right)^{d^2}\left\{
24\sqrt{d}\left(1 + \frac{16}{\sigma_0} + \frac{8}{\sigma_0^2}\right)\right\}^{n\times d},
\end{align*}
completing the proof of the first assertion. For the second assertion, we proceed similarly to derive
\begin{align*}
\left\{\bX\in\Theta_n:\rho(\bX,\bX_0) \leq 2j\eps\right\}
&\subset\bigcup_{\bW\in\mathbb{O}(d)}\left\{\bX\in\Theta_n:\|\bX - \bX_0\bW\|_{\mathrm{F}} \leq \left(\frac{16}{\sigma_0} + \frac{8}{\sigma_0^2}\right)2\sqrt{nd}j\eps\right\}\\
& = \bigcup_{\bW\in\widetilde\calO(\eps)}\left\{\bX\in\Theta_n:\|\bX - \bX_0\bW\|_{\mathrm{F}}\leq \left(1 + \frac{16}{\sigma_0} + \frac{8}{\sigma_0^2}\right)2\sqrt{nd}j\eps\right\}.
\end{align*}
Invoking the fact that $\mathrm{vol}(B_1^d)\sup_{\bx\in\calX}\pi_\bx\leq \exp(C_\pi)$ for some constant $C_\pi$ independent of $n$, we obtain, for some constant $C > 0$,
\begin{align*}
\Pi(\bX\in\Theta_n:\rho(\bX,\bX_0) \leq 2j\eps)
&\leq\sum_{\bW\in\widetilde\calO(\eps)}\Pi\left(\bX\in\Theta_n:\|\bX - \bX_0\bW\|_{\mathrm{F}}\leq C\sqrt{nd}j\eps\right)\\
& = \sum_{\bW\in\widetilde\calO(\eps)}\int\ldots\int_{\left\{\|\bX - \bX_0\bW\|_{\mathrm{F}}\leq C\sqrt{nd}j\eps\right\}}\prod_{i = 1}^n\pi_\bx(\bx_i)\mathrm{d}\bx_1\ldots\mathrm{d}\bx_n\\
& \leq \left(\frac{3}{\eps}\right)^{d^2}\frac{\mathrm{vol}(B_1^{nd})}{\mathrm{vol}(B_1^d)^n}\exp(nC_\pi)\left(C\sqrt{nd}j\eps\right)^{nd}\\
& \leq \left(\frac{3}{\eps}\right)^{d^2}\exp\left\{(C_\pi - \log\mathrm{vol}(B_1^d))n\right\}\left(\sqrt{2\pi\mathrm{e}}Cj\eps\right)^{nd}.
\end{align*}
The proof is thus completed. 
\end{proof}

\subsection{A coarse posterior contraction result for the edge probability matrix} 
\label{sub:a_coarse_posterior_contraction_result}


Theorem \ref{thm:root_n_contraction_BRDPG} in the manuscript claims that the posterior contraction rate is $1/n$ with respect to $(1/n)\inf_\bW\|\bX - \bX_0\bW\|_{\mathrm{F}}^2$. It turns out that it is easier to establish a coarser posterior contraction result with an extra logarithmic factor. We show in the following proposition that contraction rate for the edge probability matrix is $\sqrt{(\log n)/n}$ with respect to $(1/n)\|\bX\bX\transpose - \bX_0\bX_0\transpose\|_{\mathrm{F}}$. Note that Proposition \ref{prop:near_rootn_contraction_BRDPG} does not imply Theorem \ref{thm:root_n_contraction_BRDPG} but is a weaker result. 

To achieve this goal, we need the following local testing lemma tailored for random graph models, which was originally presented \cite{pati2015optimal}. 
\begin{lemma}[Lemma 4.2, \citealp{pati2015optimal}]
\label{lemma:local_test_random_graph}
Assume that $\bP_1,\bP_0\in[0, 1]^{n\times n}$ are two distinct edge probability matrices and let $\calE = \{\bP\in[0, 1]^{n\times n}:\|\bP - \bP_1\|_{\mathrm{F}}\leq \|\bP_1 - \bP_0\|_{\mathrm{F}}/2\}$ be a Frobenius ball of radius $\|\bP_1 - \bP_0\|_{\mathrm{F}}/2$ centered at $\bP_1$. Based on $Y_{ij}\sim\mathrm{Bernoulli}(\bP_{ij})$ for $1\leq i\leq j\leq n$, consider testing $H_0:\bP = \bP_0$ against $H_A:\bP\in\calE$. Then there exists a test function $\phi_n$, such that
\[
\expect_{\bP_0}(\phi_n)\leq\exp(-C_1\|\bP_1 - \bP_0\|_{\mathrm{F}}^2),\quad
\sup_{\bP\in\calE}\expect_\bP(1 - \phi_n)\leq\exp(-C_2\|\bP_1 - \bP_0\|_{\mathrm{F}}^2)
\]
for some universal constants $C_1, C_2 > 0$ independent of $\bP_0,\bP_1$, and $n$. 
\end{lemma}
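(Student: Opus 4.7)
The plan is to take an explicit linear test statistic tailored to the direction $\bDelta = \bP_1 - \bP_0$. Specifically, I would define
\[
T = \sum_{i\leq j}\Delta_{ij}(Y_{ij} - \bP_{0ij}),\qquad \phi_n = \mathbbm{1}\{T > \tfrac{1}{8}\|\bDelta\|_{\mathrm{F}}^2\}.
\]
The Type I error is immediate since $\expect_{\bP_0}T = 0$, and Hoeffding's inequality applied to the independent bounded summands $\Delta_{ij}(Y_{ij} - \bP_{0ij}) \in [-|\Delta_{ij}|, |\Delta_{ij}|]$ yields
\[
\expect_{\bP_0}\phi_n = \prob_{\bP_0}\!\left(T > \tfrac{1}{8}\|\bDelta\|_{\mathrm{F}}^2\right) \leq 2\exp\!\left(-c_0\|\bDelta\|_{\mathrm{F}}^2\right),
\]
where the variance proxy is $\sum_{i\leq j}\Delta_{ij}^2 \lesssim \|\bDelta\|_{\mathrm{F}}^2$.

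For the Type II error over $\bP\in\calE$, the key step is to show that $\expect_\bP T$ is bounded below by a definite constant times $\|\bDelta\|_{\mathrm{F}}^2$. I would decompose
\[
\expect_\bP T = \sum_{i\leq j}\Delta_{ij}^2 + \sum_{i\leq j}\Delta_{ij}(\bP_{ij}-\bP_{1ij}),
\]
estimate the leading term by $\sum_{i\leq j}\Delta_{ij}^2 \asymp \|\bDelta\|_{\mathrm{F}}^2$, and control the cross term by Cauchy--Schwarz together with the ball constraint $\|\bP-\bP_1\|_{\mathrm{F}}\leq \|\bDelta\|_{\mathrm{F}}/2$. Once $\expect_\bP T\geq c\|\bDelta\|_{\mathrm{F}}^2$ for some constant $c>1/8$, a second application of Hoeffding's inequality to $T-\expect_\bP T$ gives
\[
\sup_{\bP\in\calE}\expect_\bP(1-\phi_n) \leq 2\exp\!\left(-c_0'\|\bDelta\|_{\mathrm{F}}^2\right).
\]

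The main obstacle is ensuring the constant $c$ in the lower bound $\expect_\bP T\geq c\|\bDelta\|_{\mathrm{F}}^2$ is strictly larger than the threshold fraction $1/8$. This is delicate because a crude Cauchy--Schwarz bound on the cross term can be as large as $\|\bDelta\|_{\mathrm{F}}^2/2$ and essentially cancel the leading term. The resolution is to account carefully for the symmetry of $\bDelta$ and $\bP$: using the index set $i<j$ (or equivalently the symmetric inner product $\tfrac{1}{2}\langle\cdot,\cdot\rangle_{\mathrm{F}}$), the leading term is $\tfrac{1}{2}\|\bDelta\|_{\mathrm{F}}^2$ while the cross term is bounded by $\tfrac{1}{4}\|\bDelta\|_{\mathrm{F}}^2$, opening the required gap. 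Everything else is routine bookkeeping in universal constants, yielding the claimed exponents $C_1, C_2$ that depend on neither $\bP_0,\bP_1$ nor $n$.
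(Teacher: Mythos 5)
The paper does not prove this lemma; it imports it as Lemma~4.2 of \cite{pati2015optimal}, so there is no internal proof to compare against. Your proposal is a self-contained reconstruction and it is sound: the linear test statistic in the direction $\bDelta = \bP_1 - \bP_0$ plus two applications of Hoeffding's inequality is precisely the standard Birg\'e/Le~Cam construction for local tests with separation measured in squared Frobenius norm, and it is the same style of argument as in the cited reference. You correctly identify the one genuine subtlety: a naive Cauchy--Schwarz bound on the cross term cancels the leading term exactly, so one must track the symmetry of the index set carefully.

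One small inaccuracy worth flagging: your stated statistic $T = \sum_{i\leq j}\Delta_{ij}(Y_{ij}-P_{0ij})$ uses $i\leq j$, but the clean ``leading $\tfrac12\|\bDelta\|_{\mathrm{F}}^2$ versus cross $\tfrac14\|\bDelta\|_{\mathrm{F}}^2$'' accounting you quote is exact only for the half-Frobenius statistic $T' = \tfrac12\langle\bDelta, \bY - \bP_0\rangle_{\mathrm{F}}$ (equivalently $i<j$ at full weight plus diagonal at half weight). For the literal $i\leq j$ statistic the bound is a little weaker: with $u := \sum_{i\leq j}\Delta_{ij}^2 \in [\tfrac12\|\bDelta\|_{\mathrm{F}}^2, \|\bDelta\|_{\mathrm{F}}^2]$ and $w := \sum_{i\leq j}(P_{ij}-P_{1ij})^2 \leq \|\bP-\bP_1\|_{\mathrm{F}}^2 \leq \tfrac14\|\bDelta\|_{\mathrm{F}}^2$, one gets
\[
\expect_\bP T \;\geq\; u - \sqrt{uw} \;\geq\; u\Bigl(1 - \tfrac{1}{\sqrt{2}}\Bigr)\;\geq\; \tfrac12\Bigl(1 - \tfrac{1}{\sqrt{2}}\Bigr)\|\bDelta\|_{\mathrm{F}}^2 \;\approx\; 0.146\,\|\bDelta\|_{\mathrm{F}}^2,
\]
which still clears the threshold $\tfrac18\|\bDelta\|_{\mathrm{F}}^2$, just by a smaller margin than your quoted constants suggest. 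Either version of the statistic works; I would recommend the $T'$ normalization since it gives the clean arithmetic you describe. With that adjustment the argument is complete and yields universal constants $C_1, C_2 > 0$ as claimed.
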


\begin{proposition}\label{prop:near_rootn_contraction_BRDPG}
Under the assumption of Theorem \ref{thm:root_n_contraction_BRDPG}, there exists some absolute 
constant $K > 0$ and some large constant $M > 0$, such that 
\begin{align*}
\expect_0\left\{\Pi\left(\frac{1}{n}\|\bX\bX\transpose - \bX_0\bX_0\transpose\|_{\mathrm{F}} > M\sqrt{\frac{d\log n}{n}}\mathrel{\bigg|}\bY\right)\right\}\leq 3\exp\left(-\frac{1}{2}nd\log n\right)
\end{align*}
for sufficiently large $n$. 
\end{proposition}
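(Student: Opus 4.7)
The plan is to deploy the prior-mass-and-testing decomposition of Ghosal--Ghosh--van der Vaart, combining Lemma \ref{lemma:evidence_LB} for the denominator with Lemma \ref{lemma:local_test_random_graph} for the local tests, specialized to the pseudo-metric $\rho(\bX,\bX_0) = (1/n)\|\bX\bX\transpose - \bX_0\bX_0\transpose\|_{\mathrm{F}}$ and the rate $\eps_n = \sqrt{(d\log n)/n}$. Writing $A_n = \{\bX\in\calX^n : \rho(\bX,\bX_0) > M\eps_n\}$, the standard three-term decomposition
\[
\expect_0[\Pi(A_n\mid\bY)] \leq \expect_0[\phi_n] + \prob_0\bigl(D_n \leq e^{-C_0 nd\log n}\bigr) + e^{C_0 nd\log n}\int_{A_n}\expect_\bX[1-\phi_n]\,\Pi(\mathrm{d}\bX)
\]
will be shown to have each summand bounded by $\exp(-(1/2)nd\log n)$ once $M$ is large enough, yielding the final factor of $3$.

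For the denominator, I apply Lemma \ref{lemma:evidence_LB} with $\eps = c_0\eps_n$ and fixed $\beta,\gamma$ chosen so that $\gamma^2\delta^4 c_0^2/(128\beta^2) \geq 1/2$. Since $n^2\eps^2 \asymp nd\log n$ and $nd\log(1/\eps) \leq (1/2)nd\log n\,(1+o(1))$, this yields a constant $C_0$ depending on $\delta,\beta,\gamma$ and $\pi_\bx$ such that $\prob_0(D_n \leq e^{-C_0 nd\log n}) \leq \exp(-(1/2)nd\log n)$ for large $n$. For the testing step, I slice $A_n$ into shells $S_j = \{jM\eps_n < \rho \leq (j+1)M\eps_n\}$, $j\geq 1$, and cover each $S_j$ by $\rho$-balls of radius $jM\eps_n/2$. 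Since $\|\bX\|_{\mathrm{F}}\leq\sqrt{n}$ on $\calX^n$, the elementary bound $\rho(\bX_1,\bX_2) \leq (2/\sqrt{n})\|\bX_1-\bX_2\|_{\mathrm{F}}$ reduces this to covering a Euclidean ball of radius $\sqrt{n}$ in $\mathbb{R}^{nd}$ with balls of radius of order $jM\sqrt{n}\eps_n$, giving $N_j\leq (C'/(jM\eps_n))^{nd}$ and hence $\log N_j\leq (1/2)nd\log n\,(1+o(1))$. For each center $\bX_{j,k}$ one has $\|\bX_{j,k}\bX_{j,k}\transpose - \bX_0\bX_0\transpose\|_{\mathrm{F}} \geq jMn\eps_n$, so Lemma \ref{lemma:local_test_random_graph} produces a local test $\phi_{n,j,k}$ with both error probabilities at most $\exp(-C_1 j^2 M^2 n^2\eps_n^2) = \exp(-C_1 M^2 j^2 nd\log n)$; set $\phi_n = \max_{j,k}\phi_{n,j,k}$.

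Assembling the estimates, $\expect_0[\phi_n] \leq \sum_j N_j\exp(-C_1 M^2 j^2 nd\log n) \lesssim \sum_j \exp\bigl((1/2 - C_1 M^2 j^2) nd\log n\bigr)$ and, by Fubini with $\Pi(S_j)\leq 1$, $\int_{A_n}\expect_\bX[1-\phi_n]\,\Pi(\mathrm{d}\bX) \leq \sum_j\exp(-C_1 M^2 j^2 nd\log n)$. Choosing $M$ large enough that $C_1 M^2 \geq C_0 + 1$ forces both the first sum and $e^{C_0 nd\log n}$ times the second to be geometric series dominated by their $j=1$ terms, each at most $\exp(-(1/2)nd\log n)$. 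The main obstacle is simply bookkeeping the competing $O(nd\log n)$ exponents coming from the testing error, the crude covering entropy, and the denominator lower bound so that the required threshold on $M$ is a finite, explicit function of $\delta$, $\bDelta$, and the prior hyperparameters; no delicate refinement is needed, since the coarse $\sqrt{(d\log n)/n}$ rate has enough slack to absorb the $(1/2)\log n$ factor coming from the entropy and from $\log(1/\eps_n)$ in Lemma \ref{lemma:evidence_LB}.
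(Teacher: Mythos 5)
Your proposal is correct and uses the same prior-mass-and-testing architecture as the paper, but the implementation of the testing step differs. You slice $\{\rho(\bX,\bX_0)>M\eps_n\}$ into dyadic-style shells $S_j$ and cover each shell by $\rho$-balls of radius $jM\eps_n/2$, pairing each center with a local test from Lemma \ref{lemma:local_test_random_graph} and summing a geometric series over $j$. The paper instead takes a single $\eps_n/2$-net of the entire complement $\{\rho(\bX,\bX_0)>M\eps_n\}$, with cardinality $s\leq(12/\eps_n)^{nd}$, and uses the uniform bound $\rho(\bX_r,\bX_0)\geq M\eps_n$ for every center to control both the union-bounded type~I error ($s\cdot e^{-KM^2n^2\eps_n^2}$) and a single supremum over $\{\rho>M\eps_n\}$ for the type~II error; no shelling is needed because at the coarse rate $\eps_n\asymp\sqrt{(d\log n)/n}$ the entropy term $nd\log(1/\eps_n)\approx\tfrac12 nd\log n$ is already dominated by $KM^2nd\log n$ once $M$ is large. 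Your peeling version is exactly the mechanism packaged in the paper's Lemma \ref{lemma:global_test}, which the authors reserve for the sharper rate in Theorem \ref{thm:root_n_contraction_BRDPG}; applying it here is perfectly valid and a bit more robust (it would survive even without the $\log n$ slack), at the cost of slightly more bookkeeping. One small point to make explicit: for Lemma \ref{lemma:local_test_random_graph} to apply to the ball around a center $\bX_{j,k}$ you need the ball radius to be at most $\rho(\bX_{j,k},\bX_0)/2$, so the covering centers must lie in $S_j$ (an internal net) or the radius must be halved; you implicitly assume this when you assert $\|\bX_{j,k}\bX_{j,k}^{\mathrm{T}}-\bX_0\bX_0^{\mathrm{T}}\|_{\mathrm{F}}\geq jMn\eps_n$, and the covering-number order is unaffected, but it is worth stating.
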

\begin{proof}
Take $\eps_n = \sqrt{(d\log n)/n}$. Let $\beta,\gamma > 0$ be constants to be determined later. Denote the event
\[
\Xi_n = 
\left\{\bY:
D_n>\exp\left\{-\left(\frac{16\beta^2}{\delta^2} + c_\pi + d\log\beta\right) - \gamma nd \log n - nd\left(\log \frac{1}{\eps_n}\right)\right\}
\right\}.
\]
Consider the pseudo-metric $\rho:\calX^n\times\calX^n\to[0,\infty)$ defined by $\rho(\bX_1,\bX_2) = (1/n)\|\bX_1\bX_1\transpose - \bX_2\bX_2\transpose\|_{\mathrm{F}}$. Let $\{\bX_1,\ldots,\bX_s\}$ be an $\eps_n/2$-net of $(\{\bX\in\calX^n:\rho(\bX,\bX_0) > M\eps_n\}, \rho)$. Clearly, 
\[
\rho(\bX_1,\bX_2)\leq \frac{1}{n}\left\{\|\bX_1(\bX_1 - \bX_2)\transpose\|_{\mathrm{F}} + \|(\bX_1 - \bX_2)\bX_2\transpose\|_{\mathrm{F}}\right\}\leq \frac{2}{\sqrt{n}}\|\bX_1 - \bX_2\|_{\mathrm{F}},
\]
implying that
\begin{align*}
s&\leq \calN\left(\frac{\eps_n}{2}, \calX^n, \rho\right)\leq \calN\left(\frac{\eps_n}{2}, \calX^n, \frac{2}{\sqrt{n}}\|\cdot\|_{\mathrm{F}}\right)\\
& \leq \calN\left(\frac{\sqrt{n}\eps_n}{4}, \{\bX\in\mathbb{R}^{n\times d}:\|\bX\|_{\mathrm{F}}\leq \sqrt{n}\}, \|\cdot\|_{\mathrm{F}}\right)\leq\left(\frac{12}{\eps_n}\right)^{nd}.
\end{align*}
For each $r = 1,\ldots,s$, it can be seen that $\bX\in B_\rho(\bX_r, \eps_n/2)$
 implies that $\rho(\bX , \bX_r)<\eps_n/2\leq M\eps_n/2\leq\rho(\bX_r, \bX_0)/2$.
This allows us to invoke Lemma \ref{lemma:local_test_random_graph} to construct test functions $\phi_{rn}$, $r\in[s]$, such that
\begin{align*}
\expect_0\phi_{rn}&\leq \exp\left\{-C_1n^2\rho^2(\bX_r, \bX_0)\right\}\leq \exp\left\{-Kn^2M^2\eps_n^2\right\} = \exp(-KM^2nd\log n),\\
\sup_{\bX\in B_\rho(\bX_r, \eps_n/2)}\expect_{\bX}\left(1 - \phi_{rn}\right)&\leq \exp\left\{-C_2n^2\rho^2(\bX_r, \bX_0)\right\}\leq \exp\left\{-Kn^2M^2\eps_n^2\right\}= \exp(-KM^2nd\log n)
\end{align*}
for some constant $K = \min\{C_1, C_2\}$. Taking $\phi_n = \max_{r\in[s]}\phi_{rn}$ yields the following bounds for the type I and type II error probabilities:
\begin{align}
\expect_0\phi_n &= \expect_0\left(\max_{r \in [s]}\phi_{rn}\right) \leq \sum_{r = 1}^s\expect_0(\phi_{rn})\nonumber\\
&\leq \exp\left\{nd\log 12 + nd\left(\log\frac{1}{\eps_n}\right) - KM^2nd\log n\right\}\nonumber
\\&
\label{eqn:typeI_error_test}
\leq \exp\left\{- \left(KM^2 - 3\right)nd\log n\right\},\\
\sup_{\bX:\rho(\bX,\bX_0) > M\eps_n}\expect_\bX(1 - \phi_n)& \leq  
\max_{r\in [s]}\sup_{\bX \in B_\rho(\bX_r,\eps_n/2)}\expect_\bX\left(1 - \max_{r\in[s]}\phi_{rn}\right)
\nonumber\\
\label{eqn:typeII_error_test}
&\leq \max_{r\in[s]}\sup_{\bX \in B_\rho(\bX_r,\eps_n/2)}\expect_\bX\left(1 - \phi_{rn}\right)\leq \exp(-KM^2nd\log n).
\end{align}
We are now in a position to provide an exponential upper bound for $\expect_0\left[\Pi\{\rho(\bX,\bX_0) > M\eps_n\mid\bY\}\right]$:
\begin{align*}
&\expect_0\left[\Pi\{\rho(\bX,\bX_0) > M\eps_n\mid\bY\}\right]\\
&\quad \leq \expect_0\left\{\frac{N_n(\{\bX:\rho(\bX,\bX_0) > M\eps_n\})}{D_n}\mathbbm{1}(\bY\in\Xi_n)(1 - \phi_n)\right\} + \expect_0(\phi_n) + \prob_0(\Xi_n^c)\\
&\quad \leq \max_{\bY\in\Xi_n}\left(\frac{1}{D_n}\right)\expect_0\left\{(1 - \phi_n)\int_{\{\bX:\rho(\bX,\bX_0) > M\eps_n\}}\frac{p(\bY\mid\bX)}{p(\bY\mid\bX_0)}\pi_\bX(\bX)\mathrm{d}\bX\right\} + \expect_0(\phi_n) + \prob_0(\Xi_n^c).
\end{align*}
By Fubini's theorem and inequality \eqref{eqn:typeII_error_test}, the expected value appearing in the first term of the right-hand side of the above display can be further upper bounded:
\begin{align*}
&\expect_0\left\{(1 - \phi_n)\int_{\{\bX:\rho(\bX,\bX_0) > M\eps_n\}}\frac{p(\bY\mid\bX)}{p(\bY\mid\bX_0)}\pi_\bX(\bX)\mathrm{d}\bX\right\}
\\&\quad
 = \int_{\{\bX:\rho(\bX,\bX_0) > M\eps_n\}}\expect_0\left[(1 - \phi_n)\frac{p(\bY\mid\bX)}{p(\bY\mid\bX_0)}\right]\pi_\bX(\bX)\mathrm{d}\bX
 \\&\quad
 \leq \int\sup_{\{\bX:\rho(\bX,\bX_0) > M\eps_n\}}\expect_\bX\left[(1 - \phi_n)\right]\pi_\bX(\bX)\mathrm{d}\bX\leq \exp(-KM^2nd\log n).
\end{align*}
Hence, invoking Lemma \ref{lemma:evidence_LB} and inequality \eqref{eqn:typeI_error_test} and setting $\beta = \delta$, $\gamma = 8$, we have, for some constant $c(\delta)$ depending only on $\delta$, that
\begin{align*}
\expect_0\left[\Pi\{\rho(\bX,\bX_0) > M\eps_n\mid\bY\}\right]&
\leq \exp\left\{(16 + c_\pi + d\log\delta)n + 9nd\log n - KM^2nd\log n\right\}\\
&\quad + \exp\left\{-\left(KM^2 - 3\right)nd\log n\right\} + \exp\left(-\frac{nd\log n}{2}\right)\\
&\leq 2\exp\left\{-\left(KM^2 - c(\delta)\right)nd\log n\right\} + \exp\left(-\frac{nd\log n}{2}\right).
\end{align*}
Taking $M$ sufficiently large such that $KM^2 - c(\delta) > 1/2$ completes the proof. 
\end{proof}

\subsection{Refinement of posterior contraction by restriction} 
\label{sub:refinement_of_posterior_contraction_by_restriction}


We now refine the contraction rate in Proposition \ref{prop:near_rootn_contraction_BRDPG} but the restriction of the posterior distribution over the set $\{(1/n)\|\bX\bX\transpose - \bX_0\bX_0\transpose\|_{\mathrm{F}}\leq M\sqrt{(\log n)/n}\}$. This will require the use of the following global testing Lemma, which was originally presented in \cite{ghosal2007convergence}, and is adapted to the random dot product graph model for our purpose. 

\begin{lemma}[Lemma 9, \citealp{ghosal2007convergence}]
\label{lemma:global_test}
Let $\bY\sim\mathrm{RDPG}(\bX)$ for some $\bX\in\calX^n$.
Define a pseudo-metric $\rho:\mathbb{R}^{n\times d}\times\mathbb{R}^{n\times d}\to [0,\infty)$ by $\rho(\bX, \bX_0) = (1/n)\|\bX\bX\transpose - \bX_0\bX_0\transpose\|_{\mathrm{F}}$. Let $\Theta_n\subset\calX^n$ be a collection of $n\times d$ matrices that lie in $\calX^n$. Suppose that for some non-increasing function $\eps\mapsto N(\eps)$, and some $\eps_n\geq0$,
\[
\calN\left(\frac{\eps}{4}, \left\{\bX\in\Theta_n:\rho(\bX,\bX_0) < \eps\right\}, \rho\right)\leq N(\eps)\quad\text{for all }\eps > \eps_n.
\]
Then for $K = \min\{C_1,C_2\}$ appearing in Lemma \ref{lemma:local_test_random_graph}, and for any $\eps > \eps_n$, there exists a test function $\phi_n$ for testing testing $H_0:\bX = \bX_0$ versus $H_A:\bX\in\Theta_n,\rho(\bX, \bX_0) > j\eps$ that depends on $\eps$, such that for every $j\in\mathbb{N}$, 
\begin{align*}
\expect_0(\phi_n)\leq N(\eps)\frac{\exp(-Kn^2\eps^2)}{1 - \exp(-Kn^2\eps^2)},\qquad
\sup_{\bX\in\Theta_n,\rho(\bX,\bX_0)>j\eps}\expect_\bX(1 - \phi_n)\leq \exp(-Kn^2\eps^2j^2).
\end{align*}
\end{lemma}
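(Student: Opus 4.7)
The approach follows the standard peeling device from \cite{ghosal2007convergence}: partition the alternative $\{\bX\in\Theta_n:\rho(\bX,\bX_0)>j\eps\}$ into spherical shells, build a single local test on each shell by invoking Lemma \ref{lemma:local_test_random_graph} at a well-chosen center, and take the maximum. Concretely, for each integer $l\geq 1$ I set $S_l=\{\bX\in\Theta_n:l\eps<\rho(\bX,\bX_0)\leq(l+1)\eps\}$, so that the alternative decomposes as $\bigcup_{l\geq j}S_l$. Type-I error will be controlled by a union bound over all shells and all centers, and type-II error will come from the local alternative bound of Lemma \ref{lemma:local_test_random_graph} applied to the single cover ball containing the actual $\bX$.

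First I apply the covering hypothesis at scale $2l\eps$ (in place of $\eps$), which is legal because $2l\eps>\eps>\eps_n$. Since $N$ is non-increasing, this yields
\[
\calN\!\left(\tfrac{l\eps}{2},\,\{\bX\in\Theta_n:\rho(\bX,\bX_0)<2l\eps\},\,\rho\right)\leq N(2l\eps)\leq N(\eps).
\]
Because $S_l\subset\{\rho(\bX,\bX_0)<2l\eps\}$ (with a harmless boundary enlargement when $l=1$), I obtain centers $\bX_{l,1},\ldots,\bX_{l,N_l}$ with $N_l\leq N(\eps)$ such that the balls $B_\rho(\bX_{l,k},l\eps/2)$ cover $S_l$, and I may assume each center satisfies $\rho(\bX_{l,k},\bX_0)>l\eps$. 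For each pair $(l,k)$ I then invoke Lemma \ref{lemma:local_test_random_graph} with $\bP_1=\bX_{l,k}\bX_{l,k}\transpose$ to obtain a test $\phi_{l,k}$. The essential geometric check is that any $\bX\in B_\rho(\bX_{l,k},l\eps/2)$ satisfies $\|\bP-\bP_1\|_{\mathrm{F}}=n\rho(\bX,\bX_{l,k})\leq nl\eps/2<n\rho(\bX_{l,k},\bX_0)/2=\|\bP_1-\bP_0\|_{\mathrm{F}}/2$, so the cover ball lies inside the alternative ball of the lemma; consequently $\expect_0\phi_{l,k}\leq\exp(-C_1n^2l^2\eps^2)$ and $\expect_\bX(1-\phi_{l,k})\leq\exp(-C_2n^2l^2\eps^2)$ uniformly over that ball. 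Setting $\phi_n=\max_{l\geq 1,\,k\leq N_l}\phi_{l,k}$ and $K=\min\{C_1,C_2\}$, a union bound together with the elementary inequality $l^2\geq l$ gives
\[
\expect_0\phi_n\leq N(\eps)\sum_{l=1}^\infty e^{-Kn^2l\eps^2}=N(\eps)\,\frac{e^{-Kn^2\eps^2}}{1-e^{-Kn^2\eps^2}},
\]
which is the stated type-I bound. For type II, any $\bX\in\Theta_n$ with $\rho(\bX,\bX_0)>j\eps$ lies in some $S_l$ with $l\geq j$ and hence in some ball $B_\rho(\bX_{l,k},l\eps/2)$, so $\expect_\bX(1-\phi_n)\leq\expect_\bX(1-\phi_{l,k})\leq\exp(-Kn^2l^2\eps^2)\leq\exp(-Kn^2j^2\eps^2)$.

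\textbf{Main obstacle.} The only genuinely delicate step is threading three scales simultaneously: the cover radius must be (i) coarse enough that the hypothesis bounds its cardinality by $N(\eps)$, (ii) fine enough that each cover ball fits inside the local alternative $\{\|\bP-\bP_1\|_{\mathrm{F}}\leq\|\bP_1-\bP_0\|_{\mathrm{F}}/2\}$ demanded by Lemma \ref{lemma:local_test_random_graph}, and (iii) such that the resulting union-bound series in $l$ is summable. Picking the cover radius proportional to $l\eps/2$ and invoking the hypothesis at scale $2l\eps$ is the canonical choice that satisfies all three constraints at once; everything else is bookkeeping.
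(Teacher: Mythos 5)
Your overall plan — the Ghosal–van der Vaart peeling device with shells, local tests from Lemma \ref{lemma:local_test_random_graph}, and a union bound plus $l^2\geq l$ for summability — is exactly the standard route, and it is the route this cited lemma is proved by in \cite{ghosal2007convergence}. However, the step ``I may assume each center satisfies $\rho(\bX_{l,k},\bX_0)>l\eps$'' is a genuine gap, and it is precisely the delicate point you flag in your ``main obstacle'' paragraph but do not actually resolve. After covering $\{\rho<2l\eps\}$ at radius $l\eps/2$ by at most $N(\eps)$ balls, a cover ball that meets $S_l$ has a center that the triangle inequality only puts at distance $\rho(\bX_{l,k},\bX_0) > l\eps - l\eps/2 = l\eps/2$ from $\bX_0$, not $> l\eps$. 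With that weaker bound your geometric check $nl\eps/2<n\rho(\bX_{l,k},\bX_0)/2$ fails. If instead you try to repair it by pushing each relevant center into $S_l$ (which would indeed give $\rho(\bX'_{l,k},\bX_0)>l\eps$), the covering radius doubles to $l\eps$, and then the required inclusion $\|\bP-\bP_1\|_{\mathrm{F}}\leq\|\bP_1-\bP_0\|_{\mathrm{F}}/2$ of the local-test ball again fails since $nl\eps\not\leq nl\eps/2$.

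The usual fix is to start from a maximal $l\eps/2$-separated subset of $S_l$ itself (so centers automatically satisfy $\rho(\cdot,\bX_0)>l\eps$ and, by maximality, the set is an $l\eps/2$-cover of $S_l$). But then the cardinality bound requires controlling the $l\eps/4$-covering number of $S_l\subset\{\rho<(l+2)\eps\}$, whereas the hypothesis applied at scale $\eps'=(l+2)\eps$ only controls coverings at the coarser radius $(l+2)\eps/4>l\eps/4$; these do not combine to give $N_l\leq N(\eps)$. Alternatively, keeping the original (unpushed) centers from a cover of $\{\rho<(l+1)\eps\}$ at radius $(l+1)\eps/4$ does satisfy the local-test geometry once $l$ is large enough (roughly $l\geq 3$), but the resulting exponent degrades by a constant factor and the first few shells require separate treatment — so this approach cannot literally produce the constant $K=\min\{C_1,C_2\}$ claimed in the statement. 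In short, the skeleton of your argument is right, but the single unjustified assertion about the centers' distance from $\bX_0$ is where the proof currently breaks, and threading the three scales correctly (cardinality, local-test radius, summability) requires either a more careful choice of cover radius together with a constant-factor degradation of $K$, or the packing construction plus a sharper covering hypothesis (e.g., at radius $\eps/8$ instead of $\eps/4$).
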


\begin{proof}[of Theorem \ref{thm:root_n_contraction_BRDPG}]
Denote the target posterior contraction rate $\eps_n = n^{-1/2}$. 
Consider the pseudo-metric $\rho(\bX_1,\bX_2) = (1/n)\|\bX_1\bX_1\transpose - \bX_2\bX_2\transpose\|_{\mathrm{F}}$. By Proposition \ref{prop:near_rootn_contraction_BRDPG}, we can take $\Theta_n = \{\bX\in\calX^n:\rho(\bX,\bX_0) < M\sqrt{(d\log n)/n}\}$ for some large constant $M > 0$ such that
\[
\expect_0\{\Pi\left(\Theta_n\mid\bY\right)\}\leq 3\exp\left(-\frac{1}{2}nd\log n\right).
\] 
The proof is based on a refinement of that of Theorem 1 in \cite{ghosal2007convergence} with exponential error bound. We breakdown the proof into the following components.
\begin{itemize}
  \item \textbf{Component 1. }By the first assertion of Lemma \ref{lemma:entropy_prior_mass}, we have, for some constant $m > 0$ to be determined later,
  \begin{align*}
  &\sup_{\eps > m\eps_n}\log\calN\left(\frac{\eps}{4}, \{\bX\in\Theta_n:\rho(\bX,\bX_0) < \eps\}, \rho\right)\\
  &\quad\leq d^2\left(\log\frac{3}{m\eps_n}\right) + nd\log\left\{
  24\sqrt{d}\left(1 + \frac{16}{\sigma_0} + \frac{8}{\sigma_0^2}\right)\right\}
  \leq Ln^2\eps_n^2
  \end{align*}
  for some constants $L > 0$. Invoking Lemma \ref{lemma:global_test}, we obtain some test function $\phi_n$, such that for any $j\in\mathbb{N}_+$, 
  \begin{align*}
  \expect_0(\phi_n)\leq \frac{\exp\{- (Km^2 - L)n\}}{1 - \exp(-Km^2n)},\quad
  \sup_{\bX\in\Theta_n:\rho(\bX,\bX_0) > jm\eps_n}\expect_\bX(1 - \phi_n)\leq \exp(-Kj^2m^2n).
  \end{align*}
  The first type I error probability bound appearing in the last display immediately implies
  \begin{align}\label{eqn:contraction_component1}
  \expect_0\left[\Pi\left\{\bX\in\Theta_n:\rho(\bX,\bX_0) \geq m\eps_n\right\} \phi_n\right]
  &\leq 
  \frac{\exp\{- (Km^2 - L)n\}}{1 - \exp(-Km^2n)}
  \end{align}
  for any $J \geq 1$. 
  \item \textbf{Component 2. }
  Invoking the aforementioned type II error probability bound in the derivation of the first component, Fubini's theorem, and the second assertion of Lemma \ref{lemma:entropy_prior_mass} leads to
  \begin{align}
  &\expect_0\left\{N_n(\{\bX\in\Theta_n:mj\eps_n < \rho(\bX,\bX_0) \leq m(j + 1)\eps_n\}) (1 - \phi_n)\right\}\nonumber\\
  &\quad = \int_{\{\bX\in\Theta_n:mj\eps_n < \rho(\bX,\bX_0) \leq m(j + 1)\eps_n\}}\expect_0\left\{(1 - \phi_n)\frac{p(\bY\mid\bX)}{p(\bY\mid\bX_0)}\right\}\Pi(\mathrm{d}\bX)\nonumber\\
  &\quad\leq \Pi\left(\bX\in\Theta_n:\rho(\bX,\bX_0)\leq m(j + 1)\eps_n\right)\sup_{\bX\in\Theta_n:\rho(\bX,\bX_0) > mj\eps_n}\expect_\bX(1 - \phi_n)\nonumber\\
  &\quad\leq \exp\left\{ - \left(\frac{K}{2}j^2m^2 - d\log m - C\right)n\right\}\left({\eps_n}\right)^{nd}.\nonumber
  \end{align}
  for some constant $C > 0$. Letting
  \[
  \Xi_n = 
  \left\{\bY:
  D_n>\exp\left\{-\left(24 + c_\pi + d\log\delta\right) n\right\}\left({\eps_n}\right)^{nd}
  \right\},
  \]
  we further obtain for some constant $\widetilde C > 0$ that
  \begin{align}
  &\expect_0\left[
  \Pi\left\{\bX\in\Theta_n:\rho(\bX,\bX_0)\in(mj\eps_n, m(j + 1)\eps_n]\right\} (1 - \phi_n)\mathbbm{1}(\bY\in\Xi_n)
  \right]\nonumber\\
  &\quad\leq \max_{\bY\in\Xi_n}\left(\frac{1}{D_n}\right)\expect_0\left\{N_n(\{\bX\in\Theta_n:mj\eps_n < \rho(\bX,\bX_0) \leq m(j + 1)\eps_n\}) (1 - \phi_n)\right\}\nonumber\\
  \label{eqn:contraction_component2}
  &\quad\leq \exp\left\{ - \left(\frac{K}{2}j^2m^2 - d\log m - \widetilde C\right)n\right\},
  \end{align}
  and Lemma \ref{lemma:evidence_LB} allows us to control the probability of $\Xi_n^c$ with $\gamma = 8$ and $\beta = \delta$:
  \begin{align}
  \label{eqn:contraction_component3}
  \prob_0\left(\Xi_n^c\right)\leq \exp\left(-\frac{nd}{2}\right).
  \end{align}
\end{itemize}
We now decompose $\expect_0[\Pi\{\bX\in\Theta_n:\rho(\bX,\bX_0) > m\eps_n\mid\bY\}]$ as follows:
\begin{align*}
\expect_0[\Pi\{\bX\in\Theta_n:\rho(\bX,\bX_0) > m\eps_n\mid\bY\}]& 
\leq \expect_0[\Pi\{\bX\in\Theta_n:\rho(\bX,\bX_0) > m\eps_n\mid\bY\}(1 - \phi_n)\mathbbm{1}(\bY\in\Xi_n)]\\
&\quad + \prob_0(\Xi_n^c) + \expect_0[\Pi\{\bX\in\Theta_n:\rho(\bX,\bX_0) > m\eps_n\mid\bY\}\phi_n].
\end{align*}
Observe that by taking a sufficiently large $m$ such that $Km^2/2 - d\log m - \widetilde C > KJm^2/4$, we have, by inequality \eqref{eqn:contraction_component2},
\begin{align*}
&\expect_0[\Pi\{\bX\in\Theta_n:\rho(\bX,\bX_0) > m\eps_n\mid\bY\}(1 - \phi_n)\mathbbm{1}(\bY\in\Xi_n)]\\
&\quad \leq \sum_{j = 1}^\infty\expect_0[\Pi\{\bX\in\Theta_n:\rho(\bX,\bX_0) \in (mj\eps_n, m(j + 1)\eps_n]\mid\bY\}(1 - \phi_n)\mathbbm{1}(\bY\in\Xi_n)]\\
&\quad \leq \exp\left\{(d\log m + \widetilde C)\right\}\sum_{j = 1}^\infty \exp\left(-\frac{K}{2}j^2m^2n\right)
\leq \exp\left\{(d\log m + \widetilde C)n\right\}\sum_{j = 1}^\infty \exp\left(-\frac{K}{2}jm^2n\right)\\
&\quad\leq \frac{\exp\left\{ - (Km^2/2 - d\log m - \widetilde C)n\right\}}{1 - \exp( - Km^2n/2)}
\leq \frac{\exp( - Km^2n/4)}{1 - \exp(-Km^2n/2)}.
\end{align*}
It follows from inequalities \eqref{eqn:contraction_component1} and \eqref{eqn:contraction_component3} that
\begin{align*}
\expect_0[\Pi\{\bX\in\Theta_n:\rho(\bX,\bX_0) > m\eps_n\mid\bY\}]
\leq \frac{2\exp( - Km^2n/4)}{1 - \exp(-Km^2n/2)} + \exp\left(-\frac{nd}{2}\right)
\end{align*}
by further requiring $Km^2 - L \geq Km^2/4$. Hence, we invoke Proposition \ref{prop:near_rootn_contraction_BRDPG} to draw the following conclusion: there exists some large constants $M_1, M$ and an absolute constant $K > 0$, such that for sufficiently large $n$,
\begin{align*}
&\expect_0\left[\Pi\left\{\rho(\bX,\bX_0) > M_1\eps_n\mid\bY\right\}\right]\\
&\quad\leq \expect_0\left[\Pi\left\{\bX\in\Theta_n:\rho(\bX,\bX_0) > M_1\eps_n\mid\bY\right\}\right]
+ \expect_0\left\{\Pi(\Theta_n\mid\bY)\right\}\\
&\quad\leq 4\exp\left(-\frac{KM_1^2n}{4}\right) + \exp\left(-\frac{nd}{2}\right) + 3\exp\left(-\frac{1}{2}nd\log n\right)
\leq 8\exp\left(-\frac{1}{2}nd\right).
\end{align*}
Namely, there exists some constant $C_0 > 0$ that is independent of $n$, such that
\begin{align*}
\expect_0\left[\Pi\left\{\rho(\bX,\bX_0) > M_1\eps_n\mid\bY\right\}\right]&\leq
8\exp\left(-\frac{1}{2}nd\right)
\end{align*}
for sufficiently large $M_1 > 0$.
The proof of the first assertion is thus completed. The second assertion directly follows from the following observation: We see from the proof of Proposition \ref{prop:near_rootn_contraction_BRDPG} (see inequality \eqref{eqn:rho_eta_equivalence}) that for any $\bX\in\Theta_n$,
\[
\frac{1}{\sqrt{n}}\inf_{\bW\in\mathbb{O}(d)}\|\bX - \bX_0\bW(\bX,\bX_0)\|_{\mathrm{F}}\leq \frac{1}{\sqrt{n}}\|\bX - \bX_0\bW(\bX,\bX_0)\|_{\mathrm{F}}\lesssim 
\rho(\bX, \bX_0). 
\]
\end{proof}

\begin{proof}[of Theorem \ref{thm:point_estimator}]
Before proving the two assertions of the theorem, we first show that $\widetilde\bP$ is close to $\bP_0 = \bX_0\bX_0\transpose$ in mean-squared error. 
Take the pseudo-metric $\rho(\bX_1,\bX_2) = (1/n)\|\bX_1\bX_1\transpose - \bX_2\bX_2\transpose\|_{\mathrm{F}}$. 
Let $M_1$ and $M_2$ be the constants provided by Theorem \ref{thm:root_n_contraction_BRDPG}. By the Jensen's inequality, we have, by Theorem \ref{thm:root_n_contraction_BRDPG}, that
\begin{align*}
\expect_0\left(\frac{1}{n^2}\|\widetilde\bP - \bX_0\bX_0\transpose\|_{\mathrm{F}}^2\right)
&\leq 
\expect_0\left\{\frac{1}{n^2}\int_{\calX^n}\|\bX\bX\transpose - \bX_0\bX_0\|_{\mathrm{F}}^2\Pi(\mathrm{d}\bX\mid\bY)
\right\}\\
& \leq \expect_0\left\{\frac{1}{n^2}\int_{\{\rho(\bX,\bX_0)\leq M_1/\sqrt{n}\}}\|\bX\bX\transpose - \bX_0\bX_0\|_{\mathrm{F}}^2\Pi(\mathrm{d}\bX\mid\bY)
\right\}\\
&\quad + \expect_0\left[\Pi\left\{\rho(\bX,\bX_0) > \frac{M_1}{\sqrt{n}}\mathrel{\bigg|}\bY\right\}\right]\left(\frac{4}{n^2}\max_{\bX\in\calX^n}\|\bX\|_{\mathrm{F}}^4\right)\\
&\leq \frac{M_1^2}{n} + 32\exp\left\{-\frac{nd}{2}\right\}\leq \frac{2M_1^2}{n}
\end{align*}

For the first assertion, we adopt the technique applied in the proof of Theorem \ref{thm:root_n_contraction_BRDPG}. Let $\bX_0$ yield singular value decompositions $\bX_0 = \bU_0\bS_0^{1/2}\bV_0\transpose$, where $\bU_0\in\mathbb{O}(n, d)$ and $\bV_0\in\mathbb{O}(d)$. Further let $\bU_0\transpose\widehat\bU = \bW_1\bSigma\bW_2\transpose$ be the singular value decomposition of $\bU_0\transpose\widehat\bU$, and let $\bW_\bU = \bW_1\bW_2\transpose$. 
Denote $\bP_0 = \bX_0\bX_0\transpose$.
Then by Lemma \ref{lemma:Decomposition} and the fact that $\|\bA\bB\|_{\mathrm{F}} \leq \|\bA\|_{\mathrm{F}}\|\bB\|_2$, over the event 
\[
\Gamma_n = \bigcap_{k = 1}^d\left\{|\sigma_k^2(\widehat\bX) - \sigma_k^2(\bX_0)| \leq \frac{n\lambda_k(\bDelta)}{4}\right\},
\]
we have,
\begin{align*}
\|\widehat\bX - \bX_0\bV_0\bW_\bU\|_{\mathrm{F}}
&\leq \|\bS_0^{-1/2}\|_2\|\widetilde\bP - \bP_0\|_{\mathrm{F}} + 
\|\widetilde\bP - \bP_0\|_{\mathrm{F}}(\|\widehat\bS^{-1/2}\|_2 + \|\bS_0^{-1/2}\|)
\\&\quad
 + \|\widetilde\bP - \bP_0\|_{\mathrm{F}}\|\widehat\bS^{-1/2}\|_2
 + 2\|\widetilde\bP - \bP_0\|_{\mathrm{F}}\|\widehat\bS^{-1/2}\|_2 + \|\sin\Theta(\widehat\bU,\bU_0)\|_2^2\|\widehat\bS^{1/2}\|_{\mathrm{F}}
\\&\quad
  + \|\bW_\bU\widehat\bS^{1/2} - \bS^{1/2}_0\bW_\bU\|_{\mathrm{F}}\\
&\leq \frac{12}{\sqrt{n\lambda_d(\bDelta)}}\|\widetilde\bP - \bP_0\|_{\mathrm{F}} + \sqrt{\frac{n\lambda_d(\bDelta)}{4}}\|\sin\Theta(\widehat\bU,\bU_0)\|_2 + \|\bW_\bU\widehat\bS^{1/2} - \bS_0^{1/2}\bW_\bU\|_{\mathrm{F}}.
\end{align*}
By Davis-Kahan theorem, 
$\|\sin\Theta(\widehat\bU,\bU_0)\|_2
\lesssim{\|\widetilde\bP - \bP_0\|_{\mathrm{F}}}/{n\lambda_d(\bDelta)}$. 
The last term $\|\bW_\bU\widehat\bS^{1/2} - \bS_0^{1/2}\bW_\bU\|_{\mathrm{F}}$ can be upper bounded by $2\|\bW_\bU\widehat\bS - \bS_0 \bW_\bU\|_{\mathrm{F}}/\sqrt{n\lambda_d(\bDelta)}$, and $\bW_\bU\widehat\bS - \bS_0 \bW_\bU$ can be decomposed as follows:
\begin{align*}
\bW_\bU\widehat\bS - \bS_0 \bW_\bU & = (\bW_\bU - \bU_0\transpose\widehat\bU)\widehat\bS +\bU_0\transpose(\widetilde\bP -\bP_0)\widehat\bU + \bS_0(\bU_0\transpose\widehat\bU - \bW_\bU).
\end{align*}
Since by Davis-Kahan theorem,
\[
\|\bW_\bU - \bU_0\transpose\widehat\bU\|_2\leq \|\sin\Theta(\widehat\bU,\bU_0)\|_2^2\lesssim \frac{\|\widetilde\bP - \bP_0\|_{\mathrm{F}}}{n},\]
it follows that
$\|\bW_\bU\widehat\bS - \bS_0 \bW_\bU\|_{\mathrm{F}}
\leq \|\bW_\bU - \bU_0\transpose\widehat\bU\|_2(\|\widehat\bS\|_{\mathrm{F}} + \|\bS_0\|_{\mathrm{F}}) + \|\widetilde\bP - \bP_0\|_{\mathrm{F}}\lesssim \|\widetilde\bP - \bP_0\|_{\mathrm{F}}$,
and that
\begin{align*}
\|\bW_\bU^{1/2}\widehat\bS^{1/2} - \bS_0^{1/2} \bW_\bU\|_{\mathrm{F}}\lesssim \frac{1}{\sqrt{n}}\|\widehat\bP - \bP_0\|_{\mathrm{F}}.
\end{align*}
Combining the above results, over the event $\Gamma_n$, we obtain $\|\widehat\bX - \bX_0\bV_0\bW_\bU\|_{\mathrm{F}}\leq (C_\bDelta/{\sqrt{n}})\|\widetilde\bP - \bP_0\|_{\mathrm{F}}$ for some constant $C_\bDelta$ depend on $\bDelta$. 
Also note that the probability of $\Gamma_n^c$ can also be bounded by Markov's inequality,
\begin{align*}
\prob_0(\Gamma_n^c) = \prob_0\left[\bigcup_{k = 1}^d\left\{|\sigma_k^2(\widehat\bX) - \sigma_k^2(\bX_0)| \geq \frac{n\lambda_k(\bDelta)}{4}\right\}\right]
 \leq \prob_0\left\{\frac{1}{n}\|\widetilde\bP - \bP_0\|_{\mathrm{F}} > \frac{\lambda_d(\bDelta)}{4}\right\}\lesssim  \frac{1}{n}.
\end{align*}
Observe that
\begin{align*}
\|\widehat\bX\|_{\mathrm{F}}^2\leq \|\widetilde\bP\|_{\mathrm{F}} \leq \int_{\calX^n}\|\bX\bX\transpose\|_{\mathrm{F}}\Pi(\mathrm{d}\bX\mid\bY)\leq \int_{\calX^n}\|\bX\|_{\mathrm{F}}^2\Pi(\mathrm{d}\bX\mid\bY)\leq n.
\end{align*}
Therefore, 
\begin{align*}
\expect_0\left(\frac{1}{n}\inf_{\bW\in\mathbb{O}(d)}\|\widehat\bX - \bX_0\bW\|_{\mathrm{F}}^2\right)
&\leq \expect_0\left\{\mathbbm{1}(\bY\in\Gamma_n)\frac{1}{n}\|\widehat\bX - \bX_0\bV_0\bW_\bU\|_{\mathrm{F}}^2\right\} + 2\prob_0(\Gamma_n^c)\\
&\leq \expect_0\left\{\frac{C_\bDelta^2}{n}\|\widetilde\bP - \bP_0\|_{\mathrm{F}}^2\right\} +
 2\prob_0\left(\Gamma_n^c\right)\lesssim\frac{1}{n}.
\end{align*}

Now we focus on the second assertion. For convenience denote $\bE = \widetilde\bP - \bX_0\bX_0\transpose$. Then for any $t > 0$, the moment generating function of $\|\bE\|_{\mathrm{F}}$ satisfies
\begin{align*}
M_{\|\bE\|_{\mathrm{F}}}(t) &= \expect_0\left[\exp\left\{t\left\|\int_{\calX^n}(\bX\bX\transpose - \bX_0\bX_0\transpose)\Pi(\mathrm{d}\bX\mid\bY)\right\|_{\mathrm{F}}\right\}\right]\\
&\leq \expect_0\left[\exp\left\{t\int_{\calX^n}\left\|\bX\bX\transpose - \bX_0\bX_0\transpose\right\|_{\mathrm{F}}\Pi(\mathrm{d}\bX\mid\bY)\right\}\right]\\
&\leq \expect_0\left[\exp\left\{t\int_{\{\|\bX\bX\transpose - \bX_0\bX_0\transpose\|_{\mathrm{F}} \leq \sqrt{n}M_1 \}}\left\|\bX\bX\transpose - \bX_0\bX_0\transpose\right\|_{\mathrm{F}}\Pi(\mathrm{d}\bX\mid\bY)\right\}\right]\\
&\quad + \expect_0\left\{\Pi\left(\{\|\bX\bX\transpose - \bX_0\bX_0\transpose\|_{\mathrm{F}} > \sqrt{n}M_1\mid\bY\right)\right\}\exp\left\{t\sup_{\bX\in\calX^n}\|\bX\bX\transpose - \bX_0\bX_0\transpose\|_{\mathrm{F}}\right\}\\
&\leq \exp\left(tM_1\sqrt{n}\right) + 8\exp\left(2nt - \frac{nd}{2}\right).
\end{align*}
It follows from Chernoff bound with $t = d/4$ that, 
\begin{align*}
\prob_0\left(\|\bE\|_{\mathrm{F}} > 2M_1\sqrt{n}\right)
\leq  2\exp\left(-\frac{1}{4}M_1d\sqrt{n}\right)
\end{align*}
for sufficiently large $n$. The proof is completed by observing that by Davis-Kahan theorem,
\[
\|\widehat\bU - \bU_0\bW_\bU\|_{\mathrm{F}}\leq \sqrt{2d}\|\sin\Theta(\widehat\bU, \bU_0)\|_2\leq \frac{2\sqrt{2d}\|\bE\|_{\mathrm{F}}}{\lambda_d(\bX_0\bX_0\transpose)}\leq \frac{4\sqrt{2d}\|\bE\|_{\mathrm{F}}}{n\lambda_d(\bDelta)}.
\]
\end{proof}

\section{Proofs for Section \ref{sec:sample_application_community_detection_in_stochastic_blockmodels}} 
\label{sec:proofs_for_section_sub:sample_application_community_detection_in_stochastic_blockmodels}

\begin{proof}[of Theorem \ref{thm:PSE_clustering}]
Assume that \emph{a posteriori} the event $\{\|\bX - \bX_0\bW(\bX,\bX_0)\|_{\mathrm{F}}\leq M_2\}$ occurs, where $\bW(\bX, \bX_0) = \arginf_{\bW\in\mathbb{O}(d)}\|\bX - \bX_0\bW\|_{\mathrm{F}}$. Observe that by definition and triangle inequality, 
\begin{align*}
\|\bC(\bX) - \bX_0\bW(\bX_0,\bX)\|_{\mathrm{F}}
&\leq \|\bC(\bX) - \bX\|_{\mathrm{F}} + \|\bX - \bX_0\bW(\bX_0,\bX)\|_{\mathrm{F}}\\
&\leq 2\|\bX - \bX_0\bW(\bX_0,\bX)\|_{\mathrm{F}}\leq{2M_2}.
\end{align*}
Now we argue that the number of rows $\calV = \{i\in[n]:\|(\bC(\bX))_{i*} - \bW(\bX_0,\bX)\transpose\bx_{0i}\|> \xi/2\}$ is no greater than $16M_2^2/\xi^2$ by contradiction. Assuming otherwise, then we obtain
\begin{align*}
\|\bC(\bX) - \bX_0\bW(\bX_0,\bX)\|_{\mathrm{F}}^2 > \left(\frac{16M^2_2}{\xi^2}\right)\left(\frac{\xi}{2}\right)^2 = 4M_2^2,
\end{align*}
contradicting with the previous observation. Namely, $|\calV^c|\geq n - 16M^2/\xi^2$.  Consequently, for any $i,j\in\calV^c$, $(\bC(\bX))_{i*} = (\bC(\bX))_{j*}$, we see that
\begin{align*}
\|\bx_{0i} - \bx_{0j}\|_2 
& = \|\bW(\bX_0,\bX)\transpose(\bx_{0i} - \bx_{0j})\|_2\\
& \leq \|(\bC(\bX))_{i*} - \bW(\bX_0,\bX)\transpose\bx_{0i} \| + \|(\bC(\bX))_{j*} - \bW(\bX_0,\bX)\transpose\bx_{0j} \| \leq \xi,
\end{align*}
implying that $\bx_{0i} = \bx_{0j}$ by assumption. Note that $n_k \geq |\calV|$ for all $k$, \emph{i.e.}, $\{\bx_{0i}:i\in\calV^c\} = \{\bx_{0k}^*:k\in[K]\}$, it follows that for each $k\in[K]$, $B_{\|\cdot\|_2}(\bW(\bX_0,\bX)\transpose\bx_{0k}^*, \xi/2)$ contains at least one element of $\{(\bC(\bX))_{i*}:i\in\calV^c\}$. Since $B_{\|\cdot\|_2}(\bW(\bX_0,\bX)\transpose\bx_{0k}^*, \xi/2)$ are disjoint by assumption, and there are only $K$ distinct rows in $\bC(\bX)$, it follows directly from the pigeonhole principle that each $B_{\|\cdot\|_2}(\bW(\bX_0,\bX)\transpose\bx_{0k}^*, \xi/2)$ contains exactly one element of $\{(\bC(\bX))_{i*}:i\in\calV^c\}$. Consequently, if $\bx_{0i} = \bx_{0j} = \bx_{0k}^*$ for some $i,j\in\calV^c$ and $k\in[K]$, then $\bC(\bX)_{i*},\bC(\bX)_{j*}\in B_{\|\cdot\|_2}(\bW(\bX_0,\bX)\transpose\bx_{0k}^*, \xi/2)$, implying that $\bC(\bX)_{i*} = \bC(\bX)_{j*}$. 

The above argument can be briefly stated as follows: $\bx_{0i} = \bx_{0j}$ if and only if $\bC(\bX)_{i*} = \bC(\bX)_{j*}$. This immediately implies that
\[
\inf_{\sigma\in\calS_K}d_H(\sigma\circ\tau(\cdot;\bX_0), \tau(\cdot;\bX)) \leq \frac{16M_2^2}{\xi^2},
\]
and the first assertion is proved by an application of Theorem \ref{thm:root_n_contraction_BRDPG}. 

To prove the second assertion, we need to apply the large deviation bound in Theorem \ref{thm:point_estimator}. Note that for any $i,j\in[n]$ with $\bx_{0i}\neq \bx_{0j}$, 
\begin{align*}
\|\bx_{0i} - \bx_{0j}\|_2^2
& = (\be_i - \be_j)\transpose\bX_0\bX_0\transpose(\be_i - \be_j) = \|\bS^{1/2}_0\bU_0\transpose(\be_i - \be_j)\|_2^2\leq \|\bX_0\|_{\mathrm{F}}^2\|\bU_0\transpose(\be_i - \be_j)\|_2\\
&\leq n(\be_i - \be_j)\transpose\bU_0\bU_0\transpose(\be_i - \be_j) = n\|(\bU_0)_{i*} - (\bU_0)_{j*}\|_2^2. 
\end{align*}
By assumption, this implies that $\|(\bU_0)_{i*} - (\bU_0)_{j*}\|_2 > \xi/\sqrt{n}$. Assume the event $\left\{\|\widehat\bU - \bU_0\bW_\bU\|_{\mathrm{F}} \leq M'/\sqrt{n}\right\}
$ occurs with respect to $\prob_0$, where 
$M' = {8M_1\sqrt{2d}}/{\lambda_d(\bDelta)}$ is a constant. Similarly, 
\[
\|\bC(\widehat\bU) - \bU_0\bW_\bU\|_{\mathrm{F}}\leq 2\|\widehat\bU - \bU_0\bW_\bU\|\leq \frac{2M'}{\sqrt{n}}.
\]
An argument that is similar to that for the first assertion (up to a factor of $1/\sqrt{n}$) shows that 
\[
\inf_{\sigma\in\calS_K}d_H(\sigma\circ\tau(\cdot;\widehat\bU), \tau(\cdot;\bU_0))\leq \frac{16M'^2}{\xi^2}.
\]
Namely, 
\begin{align*}
\prob_0\left(\inf_{\sigma\in\calS_K}d_H(\tau(\cdot;\widehat\bU), \tau(\cdot;\bU_0)) > \frac{16M'^2}{\xi^2}\right)\leq  2\exp\left(-\frac{1}{4}M_1d\sqrt{n}\right).
\end{align*}
It follows immediately from Borel-Cantelli lemma that 
\begin{align*}
\prob_0\left(\inf_{\sigma\circ\sigma\in\calS_K}d_H(\sigma\circ\tau(\cdot;\widehat\bU), \tau(\cdot;\bU_0)) \leq \frac{16M'^2}{\xi^2}\text{ almost always}\right) = 1. 
\end{align*}
The proof is completed by plugging-in $M'$. 
\end{proof}

\section{Proofs for Section \ref{sec:spectral_based_bayesian_estimation}} 
\label{sec:proof_of_theorem_GSE}


\begin{lemma}\label{lemma:sub_Gaussian_concentration}
Let $\bE\in\mathbb{R}^{n\times n}$ be a symmetric random matrix with $(y_{ij}:1\leq i< j\leq n)$ being independent, and let $\expect_0(\bE) = \zero_{n\times n}$. Assume that $\bE$ are sub-Gaussian, \emph{i.e.}, there exists some constant $\tau > 0$, such that for all $\bA\in\mathbb{R}^{n\times n}$ with $\|\bA\|_{\mathrm{F}}^2 = 1$, for all $t > 0$,
$\prob_0\left(|\mathrm{Tr}\left(\bA\transpose\bY\right)| > t\right)\leq 2\mathrm{e}^{-\tau t^2}$.
For any $\bX,\bX_0\in\mathbb{R}^{n\times d}$ and for any $t > 0$, 
\[
\prob_0\left(\sup_{\bX\in\mathbb{R}^{n\times d}}\left|\left\langle\bE,\frac{\bX\bX\transpose - \bX_0\bX_0\transpose}{\|\bX\bX\transpose - \bX_0\bX_0\transpose\|_{\mathrm{F}}}\right\rangle_{\mathrm{F}}\right| > nt\right)\leq  6\exp\left(3nd - \frac{\tau n^2t^2}{4}\right) .
\]
\end{lemma}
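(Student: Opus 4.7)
The strategy is a classical $\epsilon$-net argument on the low-rank set of unit-Frobenius symmetric matrices, combined with the sub-Gaussian tail hypothesis. The first step is to observe that for every $\bX \in \mathbb{R}^{n \times d}$, the matrix $\bX\bX\transpose - \bX_0\bX_0\transpose$ is symmetric and of rank at most $2d$, so the normalized argument of the supremum always lies in the set
\[
\calM = \{M \in \mathbb{R}^{n\times n} : M\transpose = M,\ \mathrm{rank}(M) \leq 2d,\ \|M\|_{\mathrm{F}} = 1\}.
\]
It therefore suffices to bound $\sup_{M \in \calM} |\langle \bE, M\rangle_{\mathrm{F}}|$.

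Next I would build a Frobenius-norm $3\eta$-net $\calN_\eta$ of $\calM$. Using the spectral factorization $M = \bU K \bU\transpose$ with $\bU \in \mathbb{O}(n, 2d)$ and $K \in \mathbb{R}^{2d\times 2d}$ symmetric of unit Frobenius norm, the net can be constructed as a product of an operator-norm $\eta$-net of $\mathbb{O}(n, 2d)$ (of cardinality at most $(C_1/\eta)^{2nd}$, via the standard volumetric bound on the operator-norm unit ball of $\mathbb{R}^{n\times 2d}$) and a Frobenius $\eta$-net of symmetric unit-Frobenius $2d\times 2d$ matrices (of cardinality at most $(3/\eta)^{d(2d+1)}$). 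A direct triangle-inequality estimate gives $\|\bU_1 K_1 \bU_1\transpose - \bU_2 K_2 \bU_2\transpose\|_{\mathrm{F}} \leq 2\|\bU_1 - \bU_2\|_2 + \|K_1 - K_2\|_{\mathrm{F}}$, so the product net has radius $3\eta$ and log-cardinality at most $2nd\log(C_1/\eta) + 3d^2\log(3/\eta)$. Since $d/n\to 0$, $\eta$ can be chosen as a sufficiently small constant so that this log-cardinality is at most $3nd$ for $n$ large.

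The core union bound is then immediate: for each fixed $M' \in \calN_\eta$ the hypothesis gives $\prob_0(|\langle \bE, M'\rangle_{\mathrm{F}}| > nt/2) \leq 2\exp(-\tau n^2 t^2/4)$, so
\[
\prob_0\left(\max_{M' \in \calN_\eta} |\langle \bE, M'\rangle_{\mathrm{F}}| > \tfrac{nt}{2}\right) \leq 2\exp\left(3nd - \tfrac{\tau n^2 t^2}{4}\right).
\]
To close the argument I would control the approximation error from the net: for $M \in \calM$ and its nearest neighbor $M' \in \calN_\eta$, the difference $M - M'$ is symmetric of rank at most $4d$ with Frobenius norm at most $3\eta$, so trace duality and the nuclear-to-Frobenius inequality give $|\langle \bE, M - M'\rangle_{\mathrm{F}}| \leq \|M - M'\|_* \|\bE\|_2 \leq 6\eta\sqrt{d}\,\|\bE\|_2$. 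A shorter auxiliary $\epsilon$-net argument over rank-one matrices $uv\transpose$ with $u,v$ on the unit sphere, combined with the sub-Gaussian hypothesis applied to each such rank-one test matrix, controls $\|\bE\|_2 \leq c\sqrt{n}$ with probability at least $1 - 2\exp(-cn)$. Taking $\eta$ small enough (allowed to depend on $t$) so that $6c\eta\sqrt{nd} \leq nt/2$ absorbs this deviation into the target threshold; combining the two probability bounds and adjusting leading constants yields the stated bound $6\exp(3nd - \tau n^2 t^2/4)$.

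The main obstacle is the delicate calibration of the net resolution $\eta$: it must be coarse enough to keep the net cardinality inside the $\exp(3nd)$ budget fixed by the target bound, yet fine enough that the residual $6\eta\sqrt{nd}$ can be absorbed into $nt/2$. Matching the exact constants $3nd$ and $\tau n^2 t^2/4$, rather than the $O(nd\log n)$ exponent one would obtain from a naive choice such as $\eta = 1/n$, is what forces the combined argument with the separate operator-norm tail of $\bE$.
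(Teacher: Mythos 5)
Your overall strategy (low-rank reduction, SVD-based product net, union bound) matches the skeleton of the paper's argument, but the way you close the loop on the net approximation error is genuinely different from the paper's proof and, as written, does not deliver the stated bound.

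The paper uses a \emph{self-bounding} (or ``bootstrap'') trick with a \emph{fixed} constant net resolution $1/6$. After writing $\bB = \bU\bS\bV\transpose$ and rounding $\bU,\bV,\bS$ to their nearest net points $\widetilde\bU,\widetilde\bV,\widetilde\bS$, each of the three residual terms $\langle\bE,\bU\bS(\bV-\widetilde\bV)\transpose\rangle_{\mathrm{F}}$, $\langle\bE,\bU(\bS-\widetilde\bS)\widetilde\bV\transpose\rangle_{\mathrm{F}}$, $\langle\bE,(\bU-\widetilde\bU)\widetilde\bS\widetilde\bV\transpose\rangle_{\mathrm{F}}$ is itself an inner product of $\bE$ against a rank-$\le 2d$ matrix of Frobenius norm $\le 1/6$. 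Hence each is bounded by $(1/6)\sup_{\mathrm{rank}\le 2d,\ \|\cdot\|_{\mathrm{F}}=1}|\langle\bE,\cdot\rangle_{\mathrm{F}}|$, the three of them sum to $(1/2)\sup$, and moving that $(1/2)\sup$ to the left gives the deterministic inequality $\sup \le 2\max_{\text{net}}$. No separate control of $\|\bE\|_2$ is needed, and the net resolution never has to shrink with $t$ or $n$.

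Your route instead bounds the residual via trace duality, $|\langle\bE, M-M'\rangle_{\mathrm{F}}|\le 6\eta\sqrt{d}\,\|\bE\|_2$, and then invokes a separate high-probability bound $\|\bE\|_2 \lesssim \sqrt{n}$. This introduces a fixed exception event $\{\|\bE\|_2 > c\sqrt{n}\}$ whose probability is on the order of $\exp(-cn)$ and, crucially, does \emph{not} decay with $t$. The combined bound is therefore of the form $2\exp(3nd - \tau n^2t^2/4) + 2\exp(-cn)$, and for moderately large $t$ the second term dominates and exceeds the target $6\exp(3nd - \tau n^2t^2/4)$ by an arbitrarily large factor. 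Trying to rescue this by letting the threshold $c$ grow with $t$ (so that $\exp(-cn)$ decays fast enough) forces $\eta$ to shrink proportionally, and the net log-cardinality then blows past the $3nd$ budget. This is the concrete failure behind the ``delicate calibration'' you flagged. The missing idea is precisely the self-bounding step: keep the net at a fixed constant resolution so the residual is absorbed into a fraction of the supremum itself, eliminating the $\|\bE\|_2$ term and with it the $t$-independent exception probability.
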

\begin{proof}
The proof is based on a popular discretization and covering technique (see, for example, \citealp{tight_oracle_inequalities}). First observe that
\begin{align*}
\left\{\sup_{\bX\in\mathbb{R}^{n\times d}}
\left|\left\langle\bE,\frac{\bX\bX\transpose - \bX_0\bX_0\transpose}{\|\bX\bX\transpose - \bX_0\bX_0\transpose\|_{\mathrm{F}}}\right\rangle_{\mathrm{F}}\right| > nt
\right\}
&\subset \left\{\sup_{\mathrm{rank}(\bB)\leq 2d, \|\bB\|_{\mathrm{F}} = 1}
\left|\left\langle\bE,{\bB}\right\rangle_{\mathrm{F}}\right| > nt
\right\}.
\end{align*}
Let $\calD(1/6)$ be an $1/6$-net of $\{\bS = \mathrm{diag}(\sigma_1,\ldots,\sigma_{2d}):\|\bS\|_{\mathrm{F}} = 1,\sigma_i\geq 0\}$, and $\calO(1/6)$ be an $1/6$-net of $\{\bU\in\mathbb{R}^{n\times 2d}:\|\bU\|_{\mathrm{F}} = 1\}$. Clearly, $|\calO(1/6)|\leq (18)^{nd}$ and $|\calD(1/6)|\leq (18)^{2d}$ due to the covering number bounds of the Euclidean space \citep{pollard1990empirical}. For any $\bB$ with $\mathrm{rank}(\bB) \leq 2d$ and $\|\bB\|_{\mathrm{F}} = 1$, let $\bB$ admits singular value decomposition $\bB = \bU\bS\bV\transpose$, where $\bU,\bV\in\mathbb{O}(n, 2d)$. Then there exists some $\widetilde\bU,\widetilde\bV\in\calO(1/6)$ and $\widetilde\bS\in\calD(1/6)$, such that $\|\bU - \widetilde\bU\|_{\mathrm{F}} < 1/6$, $\|\bV - \widetilde\bV\|_{\mathrm{F}} < 1/6$, and $\|\bS - \widetilde\bS\|_{\mathrm{F}} < 1/6$. We proceed to derive
\begin{align*}
\left|\left\langle\bE,{\bB}\right\rangle_{\mathrm{F}}\right|& = 
\left|\left\langle\bE, \bU\bS\bV\transpose\right\rangle_{\mathrm{F}}\right|\\
&\leq \left|\left\langle\bE, \bU\bS(\bV - \widetilde\bV)\transpose\right\rangle_{\mathrm{F}}\right| + \left|\left\langle\bE, \bU(\bS - \widetilde\bS)\widetilde\bV\transpose\right\rangle_{\mathrm{F}}\right| + \left|\left\langle\bE, (\bU - \widetilde\bU)\widetilde\bS\widetilde\bV\transpose\right\rangle_{\mathrm{F}}\right|
\\&\quad
 + \left|\left\langle\bE, \widetilde\bU\widetilde\bS\widetilde\bV\transpose\right\rangle_{\mathrm{F}}\right|\\
&\leq \left|\left\langle\bE, \frac{\bU\bS(\bV - \widetilde\bV)\transpose}{\|\bU\bS(\bV - \widetilde\bV)\transpose\|_{\mathrm{F}}}\right\rangle_{\mathrm{F}}\right|\|\bV - \widetilde\bV\|_{\mathrm{F}}
+ \left|\left\langle\bE, \frac{\bU(\bS - \widetilde\bS)\widetilde\bV\transpose}{\|\bU(\bS - \widetilde\bS)\widetilde\bV\transpose\|_{\mathrm{F}}}\right\rangle_{\mathrm{F}}\right|\|\bS - \widetilde\bS\|_{\mathrm{F}}
\\&\quad
 + \left|\left\langle\bE, \frac{(\bU - \widetilde\bU)\widetilde\bS\widetilde\bV\transpose}{\|(\bU - \widetilde\bU)\widetilde\bS\widetilde\bV\transpose\|_{\mathrm{F}}}\right\rangle_{\mathrm{F}}\right|\|\bU - \widetilde\bU\|_{\mathrm{F}} + \sup_{\widetilde\bU,\widetilde\bV\in\calO(1/6),\bS\in\calD(1/6)}\left|\left\langle\bE, \widetilde\bU\widetilde\bS\widetilde\bV\transpose\right\rangle_{\mathrm{F}}\right|\\
&\leq \frac{1}{2}\sup_{\mathrm{rank}(\bB)\leq 2d,\|\bB\|_{\mathrm{F}} = 1}|\langle\bE,\bB\rangle_{\mathrm{F}}| + \sup_{\widetilde\bU,\widetilde\bV\in\calO(1/6),\bS\in\calD(1/6)}\left|\left\langle\bE, \widetilde\bU\widetilde\bS\widetilde\bV\transpose\right\rangle_{\mathrm{F}}\right|.
\end{align*}
Hence, we obtain, after taking the supremum with respect to $\{\bB:\mathrm{rank}(\bB)\leq 2d,\|\bB\|_{\mathrm{F}} = 1\}$, that
\[
\sup_{\mathrm{rank}(\bB)\leq 2d,\|\bB\|_{\mathrm{F}} = 1}|\langle\bE,\bB\rangle_{\mathrm{F}}| \leq 2 \sup_{\widetilde\bU,\widetilde\bV\in\calO(1/6),\bS\in\calD(1/6)}\left|\left\langle\bE, \widetilde\bU\widetilde\bS\widetilde\bV\transpose\right\rangle_{\mathrm{F}}\right|.
\]
Therefore, by the union bound, we obtain
\begin{align*}
&\prob_0\left(\sup_{\bX\in\mathbb{R}^{n\times d}}\left|\left\langle\bE,\frac{\bX\bX\transpose - \bX_0\bX_0\transpose}{\|\bX\bX\transpose - \bX_0\bX_0\transpose\|_{\mathrm{F}}}\right\rangle_{\mathrm{F}}\right| > nt\right)
\\&\quad
\leq \prob\left(\sup_{\widetilde\bU,\widetilde\bV\in\calO(1/6),\bS\in\calD(1/6)}\left|\left\langle\bE, \widetilde\bU\widetilde\bS\widetilde\bV\transpose\right\rangle_{\mathrm{F}}\right| > \frac{nt}{2}\right)
\\&\quad
\leq\sum_{\widetilde\bU\in\calO(1/6)}\sum_{\widetilde\bV\in\calO(1/6)}\sum_{\widetilde\bS\in\calD(1/6)}\prob\left(\left|\left\langle\bE, \widetilde\bU\widetilde\bS\widetilde\bV\transpose\right\rangle_{\mathrm{F}}\right| > \frac{nt}{2}\right)
\\&\quad
\leq 6(18)^{nd}\exp(-\tau nt^2/4)\leq 6\exp\left(3nd - \frac{\tau n^2t^2}{4}\right),
\end{align*}
where we have invoke the condition that $\bE$ has sub-Gaussian entries. The proof is thus completed.
\end{proof}

\begin{proof}[of Theorem \ref{thm:pseudo_posterior_contraction}]
Let $\eps_n = \sqrt{d(\log n)/n}$. Simple algebra shows that 
\[
-\frac{1}{2}\|\bY - \bX\bX\transpose\|_{\mathrm{F}}^2 + \frac{1}{2}\|\bY - \bX_0\bX_0\transpose\|_{\mathrm{F}}^2
 = -\frac{1}{2}\|\bX\bX\transpose - \bX_0\bX_0\transpose\|_{\mathrm{F}}^2 + \langle\bE, \bX\bX\transpose - \bX_0\bX_0\transpose\rangle_{\mathrm{F}},
\]
where $\bE = \bY - \expect_0\bY$. For any $\alpha > 0$, denote
\[
\calE_n(\alpha) = \left\{\sup_{\bX\in\mathbb{R}^{n\times d}}\left|\left\langle\bE,\frac{\bX\bX\transpose - \bX_0\bX_0\transpose}{\|\bX\bX\transpose - \bX_0\bX_0\transpose\|_{\mathrm{F}}}\right\rangle_{\mathrm{F}}\right| \leq \alpha n\eps_n\right\}.
\]
Since $\bE$ is sub-Gaussian, we can invoke Lemma \ref{lemma:sub_Gaussian_concentration} and obtain
\begin{align*}
\prob_0\left\{\calE_n(\alpha)^c\right\}  \leq 6\exp\left(3nd - \frac{\tau n^2\alpha^2\eps_n^2}{4}\right) = 6\exp\left\{-\left(\frac{\alpha^2\tau}{4}\log n - 3\right)dn\right\}.
\end{align*}
Denote $\calU_n(\gamma) = \{\|\bX\bX\transpose - \bX_0\bX_0\transpose\|_{\mathrm{F}} \leq n\gamma\}$. Then over the event $\calE_n(\alpha)$, the denominator $D_n^G$ can be lower bounded as follows:
\begin{align*}
D_n^G& \geq \int_{\calU_n(\eps_n)}\exp\left\{ - \left|\left\langle\bE,\frac{\bX\bX\transpose - \bX_0\bX_0\transpose}{\|\bX\bX\transpose - \bX_0\bX_0\transpose\|_{\mathrm{F}}}\right\rangle_{\mathrm{F}}\right|\|\bX\bX\transpose - \bX_0\bX_0\transpose\|_{\mathrm{F}}
\right.
\\&\quad\quad\quad\quad\quad\quad\quad
\left. -\frac{1}{2}\|\bX\bX\transpose - \bX_0\bX_0\transpose\|_{\mathrm{F}}^2
\right\}\Pi(\mathrm{d}\bX)\\
& \geq \int_{\calU_n(\eps_n)}\exp\left\{-\frac{1}{2}\|\bX\bX\transpose - \bX_0\bX_0\transpose\|_{\mathrm{F}}^2 - \alpha n\eps_n\|\bX\bX\transpose - \bX_0\bX_0\transpose\|_{\mathrm{F}}\right\}\Pi(\mathrm{d}\bX)\\
& \geq \int_{\calU_n(\eps_n)}\exp\left\{-\frac{1}{2}\|\bX\bX\transpose - \bX_0\bX_0\transpose\|_{\mathrm{F}}^2 - \frac{1}{2}\alpha^2 n^2\eps_n^2 - \frac{1}{2}\|\bX\bX\transpose - \bX_0\bX_0\transpose\|_{\mathrm{F}}^2\right\}\Pi(\mathrm{d}\bX)\\
& \geq\Pi\{\calU_n(\eps_n)\}\exp\left\{-\left(1 + \frac{\alpha^2}{2}\right)n^2\eps_n^2\right\}.
\end{align*}
Observe that
\begin{align*}
\|\bX\bX\transpose - \bX_0\bX_0\transpose\|_{\mathrm{F}}
&\leq \|(\bX - \bX_0)(\bX - \bX_0)\transpose\|_{\mathrm{F}} + \|\bX_0(\bX - \bX_0)\transpose\|_{\mathrm{F}} + \|(\bX - \bX_0)\bX_0\transpose\|_{\mathrm{F}}\\
&\leq \|\bX - \bX_0\|_{\mathrm{F}}^2 + 2\sqrt{n}\|\bX - \bX_0\|_{\mathrm{F}}= (2\sqrt{n} + \|\bX - \bX_0\|_{\mathrm{F}})\|\bX - \bX_0\|_{\mathrm{F}}.
\end{align*}
It follows that
\begin{align*}
\left\{\bX:\|\bX - \bX_0\|_{\mathrm{F}}\leq\frac{\sqrt{n}\eps_n}{3}\right\}\subset\{\|\bX\bX\transpose - \bX_0\bX_0\transpose\|_{\mathrm{F}}\leq n\eps_n\} = \calU_n(\eps_n).
\end{align*}
Note that the concentration of Gaussian distribution can be lower bounded by the Anderson's lemma:
\begin{align*}
\Pi\{\calU_n(\eps_n)\}& \geq \Pi\left(\bX:\|\bX - \bX_0\|_{\mathrm{F}}\leq\frac{\sqrt{n}\eps_n}{3}\right)
\geq\exp\left(-\frac{1}{2\sigma^2}\|\bX_0\|_{\mathrm{F}}^2\right)\prod_{i = 1}^n\prod_{k = 1}^d\Pi\left(x_{jk}^2\leq\frac{\eps_n^2}{9d}\right)\\
&= \exp\left(-\frac{1}{2\sigma^2}\|\bX_0\|_{\mathrm{F}}^2\right)\prod_{i = 1}^n\prod_{k = 1}^d\left\{2\Phi\left(\frac{\eps_n}{3\sigma\sqrt{d}}\right) - 1\right\}\\
&\geq
\exp\left\{-\left(\frac{1}{2\sigma^2} + d\right)n - nd\left|\log\frac{\eps_n}{3\sigma\sqrt{d}}\right|\right\}\\
&\geq\exp\left\{-\left(\frac{1}{2\sigma^2} + d + d|\log3\sigma|\right)n - \frac{1}{2}nd\log n\right\}.
\end{align*}
Hence, over the event $\calE_n(\alpha)$, we obtain 
\begin{align*}
D_n^G&\geq \Pi\{\calU_n(\eps_n)\}\exp\left\{-\left(1 + \frac{\alpha^2}{2}\right)n^2\eps_n^2\right\}\\
&\geq\exp\left\{-\left(\frac{\alpha^2 + 3}{2}\right)nd\log n -\left(\frac{1}{2\sigma^2} + d + d|\log3\sigma|\right)n\right\}.
\end{align*}
We proceed to bound $\expect_0[\Pi\{\calU_n^c(M\eps_n)\mid\bY\}]$ as follows:
\begin{align*}
&\expect_0[\Pi\{\calU_n^c(M\eps_n)\mid\bY\}]\\
&\quad\leq \expect_0\left[\mathbbm{1}\{\bY\in\calE_n(\alpha)\}\left\{\frac{N_n^G(\calU_n^c(M\eps_n))}{D_n^G}\right\}\right] + \prob_0\{\calE_n^c(\alpha)\}\\
&\quad\leq \exp\left\{\left(\frac{\alpha^2 + 3}{2}\right)nd\log n + \left(\frac{1}{2\sigma^2} + d + d|\log3\sigma|\right)n\right\}\\
&\quad\quad\times \int_{\calU_n^c(M\eps_n)}\expect_0\left[\mathbbm{1}\{\bY\in\calE_n(\alpha)\}\exp\left\{-\frac{1}{2}\|\bX\bX\transpose - \bX_0\bX_0\transpose\|_{\mathrm{F}}^2 + \langle\bE,\bX\bX\transpose - \bX_0\bX_0\transpose\rangle\right\}\right]\Pi(\mathrm{d}\bX)\\
&\quad\quad + 6\exp\left\{ - \left(\frac{\alpha^2\tau}{4}\log n - 3\right)nd\right\}\\
&\quad\leq \exp\left\{\left(\frac{\alpha^2 + 3}{2}\right)nd\log n + \left(\frac{1}{2\sigma^2} + d + d|\log3\sigma|\right)n\right\}\\
&\quad\quad\times \int_{\calU_n^c(M\eps_n)}\exp\left\{-\frac{1}{2}\|\bX\bX\transpose - \bX_0\bX_0\transpose\|_{\mathrm{F}}^2 + \alpha n\eps_n\|\bX\bX\transpose - \bX_0\bX_0\transpose\|_{\mathrm{F}}\right\}\Pi(\mathrm{d}\bX)\\
&\quad\quad + 6\exp\left\{ - \left(\frac{\alpha^2\tau}{4}\log n - 3\right)dn\right\}\\
&\quad\leq \exp\left\{\left(\frac{\alpha^2 + 3}{2}\right)n\log n + \left(\frac{1}{2\sigma^2} + d + d|\log3\sigma|\right)n\right\}\\
&\quad\quad\times \int_{\calU_n^c(M\eps_n)}\exp\left\{-\frac{1}{2}\|\bX\bX\transpose - \bX_0\bX_0\transpose\|_{\mathrm{F}}^2 + 2\alpha^2 n^2\eps_n^2 + \frac{1}{8}\|\bX\bX\transpose - \bX_0\bX_0\transpose\|_{\mathrm{F}}^2\right\}\Pi(\mathrm{d}\bX)\\
&\quad\quad + 6\exp\left\{\left(\frac{\alpha^2\tau}{4}\log n - 3\right)nd\right\}\\
&\quad\leq \exp\left\{\left(\frac{1}{2\sigma^2} + d + d|\log3\sigma|\right)n-\left(\frac{3}{8}M^2 - \frac{5\alpha^2 + 3}{2}\right) nd\log n\right\}\\
&\quad\quad + 6\exp\left\{ - \left(\frac{\alpha^2\rho^2}{4}\log n - 3\right)nd\right\},
\end{align*}
where the second inequality is due to Fubini's theorem, and the fourth inequality is due to the fact that $ab\leq 2a^2 + b^2/8$ for any $a,b>0$. Hence, taking $\alpha = \sqrt{(M^2 + 6)/10}$, then for sufficiently large $M$, we see that
\begin{align*}
\expect_0[\Pi\{\calU_n^c(M\eps_n)\mid\bY\}]&\leq \exp\left(-\frac{1}{16}M^2nd\log n\right) + 6\exp\left\{-\frac{(M^2 + 6)\rho^2}{80}nd\log n\right\}\\
&\leq7\exp\left\{-\min\left(\frac{1}{16},\frac{\tau}{80}\right)M^2nd\log n\right\}.
\end{align*}
Now set $\Xi_n = \{\bX\in\mathbb{R}^{n\times d}:(1/n)\|\bX\bX\transpose - \bX_0\bX_0\transpose\|_{\mathrm{F}}\leq \sqrt{(Md\log n)/n}\}$. Following the same argument for deriving inequality \eqref{eqn:rho_eta_equivalence}, we see that there exists some constant $\widetilde C> 0$, such that for any $\bX\in\Xi_n$ and sufficiently large $n$, 
\begin{align*}
\inf_{\bW\in\mathbb{O}(d)}\|\bX - \bX_0\bW\|_{\mathrm{F}}\lesssim
 \sqrt{\frac{d}{n}}\|\bX\bX\transpose - \bX_0\bX_0\transpose\|_{\mathrm{F}}.
\end{align*}
Therefore, there exists some constant $M' > 0$, such that for any $\bX\in\Xi_n$,
\[
\frac{1}{M'n}\inf_{\bW\in\mathbb{O}(d)}\|\bX - \bX_0\bW\|_{\mathrm{F}}^2 \leq \frac{1}{Mn^2}\|\bX\bX\transpose - \bX_0\bX_0\transpose\|_{\mathrm{F}}^2,
\]
and hence,
\begin{align*}
&\expect_0\left\{
\Pi_G\left(\frac{1}{n}\inf_{\bW\in\mathbb{O}(d)}\|\bX - \bX_0\bW\|_{\mathrm{F}}^2 > \frac{M'd\log n}{n}\mathrel{\Big|}\bY\right)
\right\}\\
&\quad\leq \expect_0\left\{
\Pi_G\left(\bX\in\Xi_n:\frac{1}{n}\inf_{\bW\in\mathbb{O}(d)}\|\bX - \bX_0\bW\|_{\mathrm{F}}^2 > \frac{M'd\log n}{n}\mathrel{\Big|}\bY\right)
\right\} + \expect_0\left\{
\Pi_G\left(\bX\in\Xi_n^c\mid \bY\right)
\right\}\\
&\quad\leq \expect_0\left\{
\Pi_G\left(\frac{1}{n^2}\|\bX\bX\transpose - \bX_0\bX_0\transpose\|_{\mathrm{F}}^2 > \frac{Md\log n}{n}\mathrel{\Big|}\bY\right)
\right\} + \expect_0\left\{
\Pi_G\left(\bX\in\Xi_n^c\mid \bY\right)
\right\}\\
&\quad\leq 2\expect_0\left\{
\Pi_G\left(\bX\in\Xi_n^c\mid \bY\right)
\right\}\leq 14\exp\left\{-\min\left(\frac{1}{16},\frac{\tau}{80}\right)M^2nd\log n\right\},
\end{align*}
completing the proof. 
\end{proof}

\bibliographystyle{apalike}
\bibliography{reference1,reference2}
\end{document}